\DeclarePairedDelimiter\abs{\lvert}{\rvert}
\newcommand{\nats}{\mathbb{N}}
\newcommand{\R}{\mathbb{R}}
\newcommand{\exreal}{\overline{\R}}
\newcommand{\fatdot}{\,\cdot\,}
\newcommand{\Estar}{E^{\textstyle{*}}}
\newcommand{\tstar}{\theta^{\textstyle{*}}}
\newcommand{\etastar}{\eta^{\textstyle{*}}}
\newcommand{\deltastar}{\delta^{\textstyle{*}}}
\newcommand{\lkn}{\lambda_{k,n}}
\newcommand{\ekn}{e_{k,n}}
\newcommand{\weakto}{\stackrel{d}{\longrightarrow}}
\newcommand{\logonep}[1]{\log(1 + }
\newcommand{\inner}[1]{\langle #1 \rangle}
\newcommand{\set}[1]{\{\, #1 \,\}}
\DeclareMathOperator{\logit}{logit}
\DeclareMathOperator{\pr}{pr}
\DeclareMathOperator{\dom}{dom}
\DeclareMathOperator{\interior}{int}
\DeclareMathOperator{\cl}{cl}
\DeclareMathOperator{\Hyp}{H}
\DeclareMathOperator{\E}{E}
\DeclareMathOperator{\sign}{sign}
\newcommand{\opand}{\mathrel{\rm and}}
\newtheorem{cor}{Corollary}
\newtheorem{lem}{Lemma}
\newtheorem{thm}{Theorem}
\newtheorem{defn}{Definition}
\def\maxwidth{ %
  \ifdim\Gin@nat@width>\linewidth
    \linewidth
  \else
    \Gin@nat@width
  \fi
}
\definecolor{fgcolor}{rgb}{0.345, 0.345, 0.345}
\newenvironment{kframe}{%
 \def\at@end@of@kframe{}%
 \ifinner\ifhmode%
  \def\at@end@of@kframe{\end{minipage}}%
  \begin{minipage}{\columnwidth}%
 \fi\fi%
 \def\FrameCommand##1{\hskip\@totalleftmargin \hskip-\fboxsep
 \colorbox{shadecolor}{##1}\hskip-\fboxsep
     \hskip-\linewidth \hskip-\@totalleftmargin \hskip\columnwidth}%
 \MakeFramed {\advance\hsize-\width
   \@totalleftmargin\z@ \linewidth\hsize
   \@setminipage}}%
 {\par\unskip\endMakeFramed%
 \at@end@of@kframe}
\definecolor{shadecolor}{rgb}{.97, .97, .97}
\definecolor{messagecolor}{rgb}{0, 0, 0}
\definecolor{warningcolor}{rgb}{1, 0, 1}
\definecolor{errorcolor}{rgb}{1, 0, 0}
\newenvironment{knitrout}{}{} 
\title{Computationally efficient likelihood inference in exponential families 
  when the maximum likelihood estimator does not exist}
\author{Daniel J. Eck$^1$ and Charles J. Geyer$^2$ \\[1em]
\normalsize 1. Department of Statistics, University of Illinois Urbana-Champaign \\
\normalsize 2. Department of Statistics, University of Minnesota
}
\begin{document}

\maketitle

\begin{abstract}
In a regular full exponential family, the maximum likelihood estimator (MLE) need not exist in the traditional sense.  However, the MLE may exist in the completion of the exponential family.  Existing algorithms for finding the MLE in the completion solve many linear programs; they are slow in small problems and too slow for large problems. We provide new, fast, and scalable methodology for finding the MLE in the completion of the exponential family.  This methodology is based on conventional maximum likelihood computations which come close, in a sense, to finding the MLE in the completion of the exponential family. These conventional computations construct a likelihood maximizing sequence of canonical parameter values which goes uphill on the likelihood function until they meet a convergence criteria. Nonexistence of the MLE in this context results from a degeneracy of the canonical statistic of the exponential family, the canonical statistic is on the boundary of its support.  There is a correspondance between this boundary and the null eigenvectors of the Fisher information matrix. Convergence of Fisher information along a likelihood maximizing sequence follows from cumulant generating function (CGF) convergence along a likelihood maximizing sequence, conditions for which are given. This allows for the construction of necessarily one-sided confidence intervals for mean value parameters when the MLE exists in the completion. We demonstrate our methodology on three examples in the main text and three additional examples in the Appendix.  We show that when the MLE exists in the completion of the exponential family, our methodology provides statistical inference that is much faster than existing techniques.
\end{abstract}

\textbf{Keywords}: Completion of exponential families; 
Convergence of moments; 
Moment generating function; 
Complete separation;
Logistic regression;
Generalized linear models



\label{firstpage}


\section{Introduction}

In a regular full discrete exponential family, the MLE for the canonical parameter does not exist when the observed value of the canonical statistic lies on the boundary of its convex support \citep[Theorem~9.13]{barndorff-nielsen}, but the MLE does exist in a completion of the exponential family. Completions for exponential families have been described by 
\citet[pp.~154--156]{barndorff-nielsen}, 
\citet[pp.~191--201]{brown},
\citet{csiszar,csiszar2008}, and 
\citet[unpublished PhD thesis, Chapter~4]{geyer}. 
The completion that we discuss here will consist of the limit of densities under the the topology of pointwise convergence. The properties of this closure are similar to those in \citet[Chapter~4]{geyer} with conditions similar to those in \cite{brown}.  The issue of when the MLE exists in the conventional sense and what to do when it does not is very important because of the wide use of generalized linear models (GLMs) for discrete data and log-linear models for categorical data.   

Nonexistence of the MLE in these contexts is a widely studied problem.  Advances have been made in establishing necessary and sufficient conditions for existence of the MLE 
\citep{haberman1974analysis, aickin1979existence, albert-anderson, santner1986note, silvapulle1986existence, eriksson2006polyhedral, fienberg2012maximum}, 
the development of an extended or generalized MLE when the traditional MLE does not exist through convex cores of measures \citep{csiszar2001convex, csiszar2003information, csiszar, csiszar2008} and through geometric properties of exponential families and log-linear models
\citep{barndorff-nielsen, brown, geyer, verbeek1992compactification, 
  geyer-gdor, fienberg2012maximum, matuvs2015limiting, 
  wang2019approximating}.  
The issue of nonexistence also arises in exponential families for spatial lattice processes \citep{geyer-interface, geyer-thompson}, spatial point processes \citep{geyer-moller,points}, aster models \citep{geyer2007aster}, aster models with dependency groups \citep{eck2015integrated}, and random graphs 
\citep{ergm-package, ergm-paper, rinaldo-fienberg-zhou, schweinberger}. In every application of these (with the exception of aster models), existing statistical software gives completely invalid results when the MLE does not exist in the traditional sense, and such software either does not check for this problem or does weak checks that can emit both false positives and false negatives. Moreover, even if these checks correctly detect the nonexistence of the MLE, conventional software implements no valid inferential method in this setting. Authoritative textbooks \citep[Section~6.5]{agresti} discuss the issue but provide no solutions. 

\citet{geyer-gdor} developed methodology for constructing hypothesis tests and confidence intervals when the MLE in an exponential family does not exist in the traditional sense. The algorithm in \citet{geyer-gdor}, implemented in the \texttt{rcdd} R package \citep{geyer-rcdd}, are based on doing many linear programs. This algorithm does at most $n$ linear programs, where $n$ is the number of cases of a GLM or the number of cells in a contingency table, in order to determine the existence of the MLE in the traditional sense. Each of these linear programs has $p$ variables, where $p$ is the number of parameters of the model, and up to $n$ inequality constraints. Since linear programming can take time exponential in $n$ when pivoting algorithms are used, and since such algorithms are necessary in computational geometry to get correct answers despite inaccuracy of computer arithmetic (see the warnings about the need to use rational arithmetic in the documentation for R package \texttt{rcdd}), these algorithms can be very slow. Typically, they take several minutes of computer time for toy problems and can take longer than users are willing to wait for real applications.  Previous theoretical discussions \citep{barndorff-nielsen, brown, csiszar, csiszar2008, fienberg2012maximum, matuvs2015limiting, wang2019approximating} of these issues do not provide algorithms, use the notions of faces of convex sets or convex core of measure, are specific to particular discrete exponential families, or are all much harder to compute than the algorithm of \citet{geyer-gdor}. Therefore they provide no explicit direction toward efficient computing. Thus a valid appropriate solution to this issue that is efficiently computable would be very important.  

The MLE in the completion is not only a limit of distributions in the original family but also a distribution in the original family conditioned on the affine hull of a face of the effective domain of the log likelihood supremum function \citep[Theorem~4.3]{geyer}.  Valid statistical inference when the MLE does not exist in the conventional sense requires knowledge of this affine hull.  This affine hull is a support of the canonical statistic under the MLE distribution in the completion.  Hence it is a translate of the null space of the Fisher information matrix, which is the variance-covariance matrix of the canonical statistic for an exponential family.  This affine hull must contain the mean vector of the canonical statistic under the MLE distribution.  Hence knowing the mean vector and variance-covariance matrix of the canonical statistic under the MLE distribution allows us to conduct valid statistical inference, and the MLE will give us good approximations of these quantities.  We will estimate the correct affine hull from the null space of the estimated Fisher information matrix.

In this paper, we develop methodology for constructing hypothesis tests and confidence intervals when the MLE is in the completion.  The MLE in the completion is not only a limit of distributions in the original family but also a distribution in the original family conditioned on the affine hull of a face of the effective domain of the log likelihood supremum function \citep[Theorem~4.3]{geyer}.  Valid statistical inference when the MLE does not exist in the conventional sense requires knowledge of this affine hull.  This affine hull is a support of the canonical statistic under the MLE distribution in the completion.  Hence it is a translate of the null space of the Fisher information matrix, which is the variance-covariance matrix of the canonical statistic for an exponential family.  This affine hull must contain the mean vector of the canonical statistic under the MLE distribution.  Hence knowing the mean vector and variance-covariance matrix of the canonical statistic under the MLE distribution allows us to conduct valid statistical inference, and the MLE will give us good approximations of these quantities.  We will estimate the correct affine hull from the null space of the estimated Fisher information matrix.  In this paper, we make the following contributions: 
\begin{itemize}
\item We provide a computationally efficient solution that has its origins with conventional maximum likelihood computations and avoids the computationally slow linear programming algorithms in \cite{geyer-gdor}. Our computations come close, in a sense, to finding the MLE in the completion of the exponential family.  Informally our approach is to first consider a likelihood maximizing sequence of canonical parameter estimates that goes uphill on the likelihood function until a convergence criteria is satisfied.  At this point, canonical parameter estimates are still infinitely far away from the MLE in the completion, but mean value parameter estimates are close to the MLE in the completion, and the corresponding probability distributions are close in total variation norm to the MLE probability distribution in the completion.  
\item We show that probability distributions evaluated along a likelihood maximizing sequence of canonical parameter vectors are close in the sense of moment generating function convergence (Theorems~\ref{main} and \ref{convex-poly-thm} below) and consequently moments of all orders are also close.  Specifically, under the conditions needed for the closure in \cite{brown}, Theorem~\ref{convex-poly-thm} restores the convergence of moments that were a consequence of the original \cite{barndorff-nielsen} theory which was appropriate for logistic and multinomial regression. The conditions of \cite{brown} hold for infinite state space models such as Poisson regression and other interesting exponential family models. Our convergence of moments results follow from a dominated convergence argument for generalized affine functions (limits of affine functions), a convex geometry argument for generalized affine functions, and a Painlev{\'e}-Kuratowski set convergence argument which implies that null spaces of the Fisher matrix evaluated along likelihood maximizing sequence of canonical parameter vectors converge. 
\item We develop the theoretical foundations of generalized affine functions which are the pointwise limits of sequences affine functions.  Densities of exponential families are affine functions in the data. Thus, generalized affine functions represent limiting densities along sequences of canonical parameter vectors. This theory is relevant for the closure of exponential family under study and it is essential for the convergence of moments along likelihood maximizing sequences results mentioned in the preceding bullet point.
\end{itemize}

In a recent paper, \citet{candes2019phase} studied phase transitions for logistic regression models with Gaussian covariates. They showed that one may be able to determine whether or not the MLE is likely to exist before an analysis is conducted. The configuration of $n$ and $p$ in their setting is such that $n/p \to \kappa$ where $\kappa < 1$. Our methodology has the potential to provide useful and computationally inexpensive statistical inferences in this specific setting, even when phase transition arguments say that the MLE is unlikely to exist apriori. This alleviates the concern made in Section 1.2 of \cite{candes2019phase} that the geometric characterization of exponential families does not tell us when we can expect an MLE to exist and when we cannot. 

Our methodology is implemented in the R package \texttt{glmdr} \citep{glmdr}.  We demonstrate the performance of our methodology on several extensive didactic examples.  These include complete separation in logistic regression and Poisson regression.  Computational efficiency of our methodology is illustrated in Section~\ref{example-bigcategorical}.  Quasi-complete separation examples in logistic regression and Bradley-Terry models are investigated in the Appendix. Detailed R code corresponding to these examples is also provided throughout the Appendix.

\section{Motivating example}
\label{sec:motivating-example}

Consider the case of complete separation in the logistic regression model as a motivating example.  When perfect separation occurs, the canonical statistic is observed to be on the boundary of its convex support.  Suppose that we have one predictor vector $x$ having values 10, 20, 30, 40, 60, 70, 80, 90, and suppose the components of the response vector $y$ are 0, 0, 0, 0, 1, 1, 1, 1.  Then the simple logistic regression model that has linear predictor $\eta = \beta_0 + \beta_1 x$ exhibits failure of the MLE to exist in the traditional sense.  This example is the same as that of \citet[Section~6.5.1]{agresti}. 

\begin{figure}
\begin{center}
\includegraphics[width=3in]{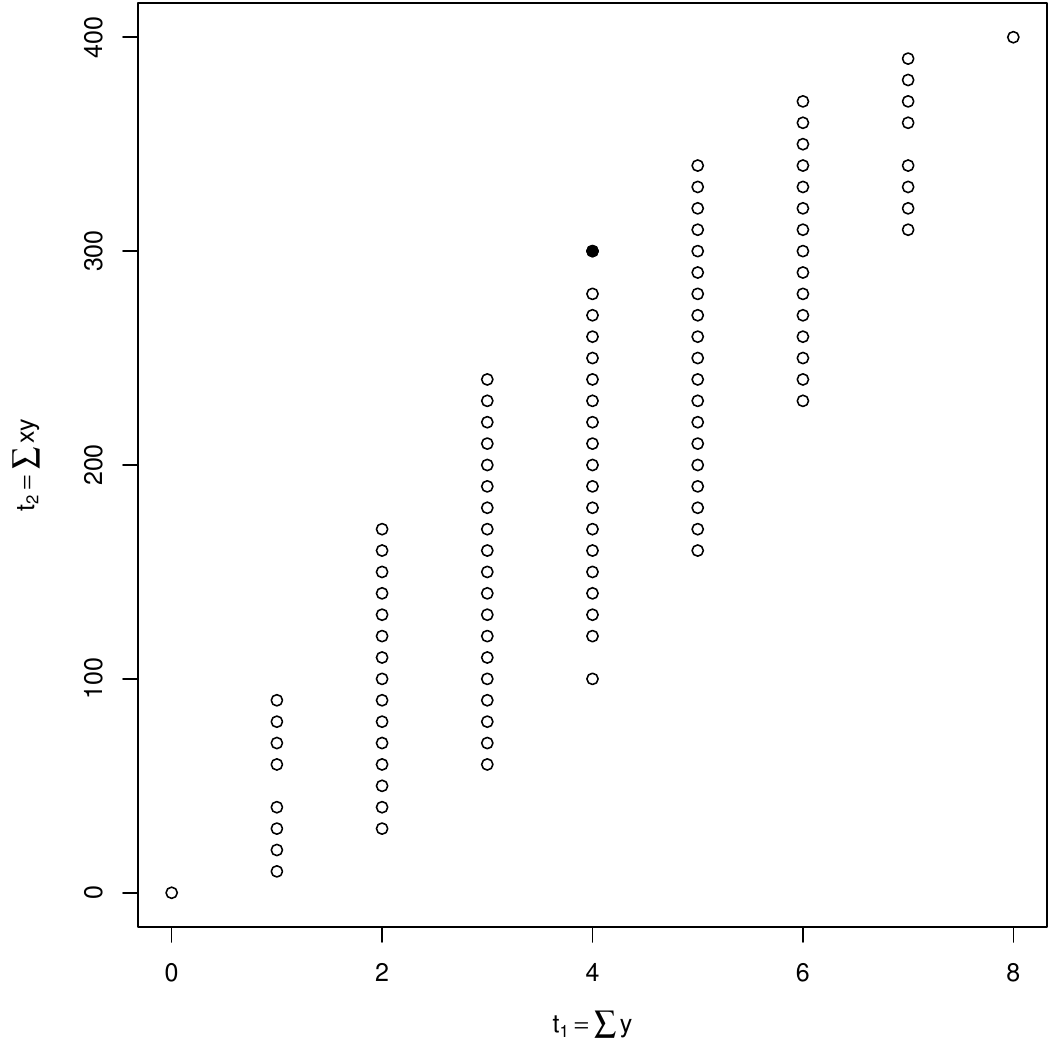}
\includegraphics[width=3in]{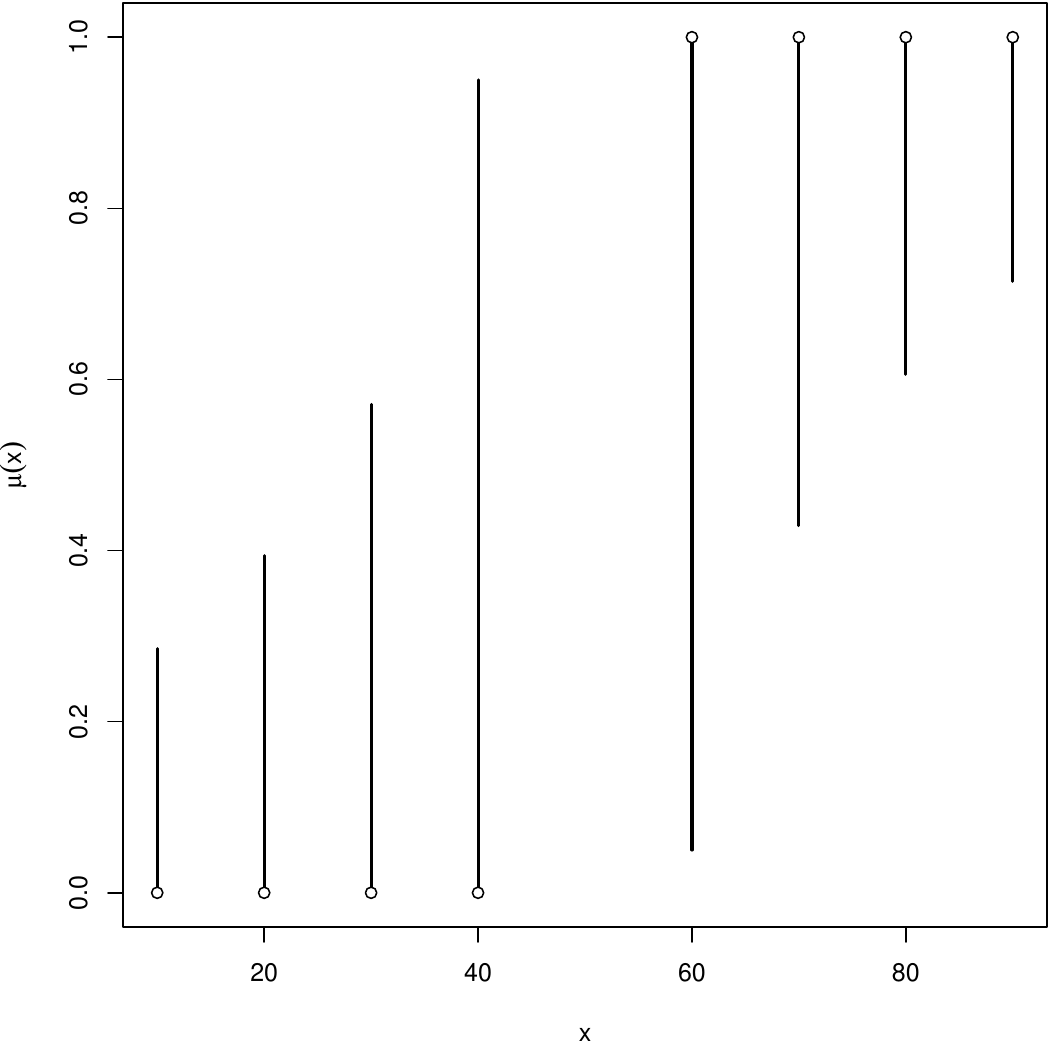}
\end{center}
\caption{{\bf Left panel}: Observed value and support of the submodel canonical statistic vector $M^T y$ for the example of Section~\ref{sec:motivating-example}. Solid dot is the observed value of this statistic. {\bf Right panel}: One-sided 95\% confidence intervals for saturated model mean value parameters.  Bars are the intervals; $\mu(x)$ is the probability of observing response value one when the predictor value is $x$. Solid dots are the observed data.
  }
\label{fig:boundary}
\end{figure}

For an exponential family, the submodel canonical statistic is $M^T y$, where $M$ is the model matrix. The left panel of Figure~\ref{fig:boundary} shows the observed value of the canonical statistic vector and the support (all possible values) of this vector. As is obvious from the figure, the observed value of the canonical statistic is on the boundary of the convex support, in which case the MLE does not exist in the traditional sense.  In this example, the MLE in the completion corresponds to a completely degenerate distribution.  This MLE distribution says no data other than what was observed could have been observed.  But the sample is not the population and estimates are not parameters.  Therefore, this degeneracy is not a problem.  To illustrate the uncertainty of estimation, we show confidence intervals (necessarily one-sided) for the saturated model mean value parameters.  These one-sided confidence intervals are obtained from functionality in the accompanying \texttt{glmdr} package.

The right panel of Figure~\ref{fig:boundary} shows that, as would be expected from so little data, the confidence intervals are very wide. The MLE in the completion says the probability of observing a response equal to one jumps from zero to one somewhere between 40 and 60. The confidence intervals show that we are fairly sure that this probability goes from near zero at $x = 10$ to near one at $x = 90$ but we are very unsure where jumps are if there are any. We discuss how these intervals are constructed in Section~\ref{sec:one-sided}.  The idea is to first find all canonical parameter values such that the probability of observing the realized degenerate data is greater than some testing level $\alpha$. We then map those canonical parameter values to the mean value parameterization.  The degeneracy follows from the estimated Fisher information matrix (for the saturated model canonical parameter vector, also called the linear predictor) at the MLE being singular which it is within the accuracy of computer arithmetic. In this motivating example, the Fisher information matrix is the zero matrix. In this case the MLE of all the saturated model mean value parameters agree with the observed data; they are on the boundary of the set of possible values, either zero or one.

In other examples, such as examples~\ref{example-poisson} and \ref{example-bigcategorical} below, the MLE distribution is only partially but not completely degenerate.  This follows from the estimated Fisher information matrix being singular (to within the accuracy of computer arithmetic) but not the zero matrix.  The MLE distribution constrains some components of the response vector to be equal to their observed values, but not all of them. The remaining unconstrained components can be estimated using traditional methods. This is explained in Sections~\ref{sec:first-characterization}.  

The methodology that we develop is applicable for any discrete regular full exponential family where the MLE does not exist in the traditional sense.  We redo Example 2.3 of \cite{geyer-gdor} in Section~\ref{example-poisson} using the methodology developed here, and we find that our methodology produces the inferences in that paper in a fraction of the time. We also provide an analysis on a big data set (too large for the methods of \citet{geyer-gdor} to run in an acceptable amount of time) to show the (relative) quickness of our implementation.

\section{Standard exponential families}
\label{sec:laptr}

Let $\lambda$ be a positive Borel measure on a finite-dimensional
vector space $E$.
The \emph{log Laplace transform} of $\lambda$ is the
function $c : \Estar \to \exreal$ defined by
\begin{equation} \label{eq:cumfun}
   c(\theta) = \log \int e^{\inner{x, \theta}} \, \lambda(d x),
   \qquad \theta \in \Estar,
\end{equation}
where $\Estar$ is the dual space of $E$,
where $\inner{\fatdot, \fatdot}$ is the canonical
bilinear form placing $E$ and $\Estar$ in duality,
and where $\exreal$ is the extended real number system,
which adds the values $- \infty$ and $+ \infty$ to the real numbers
with the obvious extensions to the arithmetic and topology
\citep[Section~1.E]{rock-wets}.

If one prefers, one can take $E = \Estar = \R^p$ for some $p$, and
define
$$
   \inner{x, \theta} = \sum_{i = 1}^p x_i \theta_i,
   \qquad \text{$x \in \R^p$ and $\theta \in \R^p$},
$$
but the coordinate-free view of vector spaces offers more generality
and more elegance.
Also, as we are about to see, if
$E$ is the sample space of a standard exponential family, then a subset of
$\Estar$ is the canonical parameter space, and the distinction between
$E$ and $\Estar$ helps remind us that we should not consider these two
spaces to be the same space.  

A log Laplace transform is a lower semicontinuous convex function that
nowhere takes the value $- \infty$ (the value $+ \infty$ is allowed and
occurs where the integral in \eqref{eq:cumfun} does not exist)
\citep[Theorem~2.1]{geyer}.   The \emph{effective domain} of an
extended-real-valued convex function $c$ on $\Estar$ is
$$
   \dom c = \set{ \theta \in \Estar : c(\theta) < + \infty }.
$$

For every $\theta \in \dom c$, the function $f_\theta : E \to \R$ defined by 
\begin{equation} \label{eq:densities}
   f_\theta(x) = e^{\inner{x, \theta} - c(\theta)}, \qquad x \in E,
\end{equation}
is a probability density with respect to $\lambda$.
The set
$
   \mathcal{F} = \set{ f_\theta : \theta \in \Theta },
$
where $\Theta$ is any nonempty subset of $\dom c$, is called a \emph{standard exponential family of densities with respect to $\lambda$}. This family is \emph{full} if $\Theta = \dom c$.  We also say $\mathcal{F}$ is the standard exponential family \emph{generated by} $\lambda$ having canonical parameter space $\Theta$, and $\lambda$ is the \emph{generating measure} of $\mathcal{F}$. The log likelihood of this family having densities \eqref{eq:densities} is
\begin{equation} \label{log-like}
  l_x(\theta) = \inner{x, \theta} - c(\theta).
\end{equation}

A general exponential family \citep[Chapter~1]{geyer} is a family
of probability distributions having a sufficient statistic $X$ taking values
in a finite-dimensional vector space $E$ that induces a family of distributions
on $E$ that have a standard exponential family of densities with respect to
some generating measure.  Reduction by sufficiency loses no statistical 
information, so the theory of standard exponential families tells us 
everything about general exponential families \citep[Section~1.2]{geyer}.  

In the context of general exponential families $X$ is called the
\emph{canonical statistic} and $\theta$ the \emph{canonical parameter}
(the terms \emph{natural statistic} and \emph{natural parameter}
are also used).
The set $\Theta$ is the canonical parameter space of the family,
the set $\dom c$ is the canonical parameter space of the full family
having the same generating measure.  A full exponential family is said to be 
\emph{regular} if its canonical parameter space $\dom c$ is an open subset of 
$\Estar$.

\section{Calculating the MLE in the completion}

We first define the completion of the exponential family.

\begin{defn} \label{closure}
Let $\theta_n$, $n = 1,\ldots$, be a sequence of canonical parameter vectors for a standard exponential family having log likelihood \eqref{log-like}. Let $h_n(x) = l_x(\theta_n)$, and suppose that $h_n(x) \to h(x)$ pointwise as $n \to \infty$ where limits $-\infty$ and $+\infty$ are allowed. The limiting functions $h$ form the closure of the exponential family.	
\end{defn}
In the above definition $h_n$ is a sequence of affine functions and the limiting function $h$ is a \emph{generalized affine function}. Generalized affine functions and their properties are defined and discussed in Section~\ref{sec:generalizedaffine}.

\subsection{Assumptions}
\label{sec:Assumptions}

So far everything has been for general exponential families. 
Our implementation requires that the conditions of \citet{brown} hold, and 
those conditions hold for logistic and log-linear models for categorical 
data analysis.  Now, we restrict our attention 
to discrete GLMs.  This, in effect, includes log-linear models
for contingency tables because we can always assume Poisson sampling,
which makes them equivalent to multinomial sampling 
[\citealp[Section~8.6.7]{agresti}; \citealp[Section~3.17]{geyer-gdor}].

The conditions of Brown that are required for our theory to hold are 
from \citet[pp.~193--197]{brown}.  These conditions are:

\begin{itemize}
\item[(i)] The support of the exponential family is a countable set $X$.
\item[(ii)] The exponential family is regular.
\item[(iii)] Every $x \in X$ is contained in the relative interior of an 
  exposed face $F$ of the convex support $K$.
\item[(iv)] The convex support of the measure $\lambda|F$ equals $F$, where $\lambda$ 
  is the generating measure for the exponential family and $\lambda|F$ is the 
  restriction of $\lambda$ to the exposed face $F$.
\end{itemize}

We let $\theta_n$ be a likelihood maximizing sequence of canonical parameter vectors, that is, 
\begin{equation} \label{max-like-seq}
   l_x(\theta_n) \to \sup_{\theta \in \Theta} l_x(\theta),
   \qquad \text{as $n \to \infty$},
\end{equation}
where the log likelihood $l$ is given by \eqref{log-like}, $\Theta$ is the canonical parameter space of the family, and $\sup_{\theta \in \Theta}l_x(\theta) < \infty$. Define $h_n(x) = l_x(\theta_n)$ as in Definition~\ref{closure}.  The limiting density $e^h$ corresponds to the MLE distribution in the completion.  The mathematical properties of generalized affine functions and this completion construction are studied in Section~\ref{sec:math}.

\subsection{The form of the MLE in the completion}
\label{sec:first-characterization}

Suppose we know the \emph{affine support} of the MLE distribution in the completion.  This is the smallest affine set (translate of a vector subspace) that contains the canonical statistic with probability one. Denote the affine support by $A$.  Since the observed value of the canonical statistic is contained in $A$ with probability one, and the canonical statistic for a GLM is $M^T Y$, where $M$ is the model matrix, $Y$ is the response vector, and $y$ its observed value, we have $A = M^T y + V$ for some vector space $V$.

Then the limiting conditional model (LCM) in which the MLE in the completion is found is the original model (OM) conditioned on the event
$$
   M^T (Y - y) \in V, \qquad \text{almost surely}
$$
\citep[Theorem~4.3]{geyer}. Suppose we characterize $V$ as the subspace where a finite set of linear equalities are satisfied
$$
   V = \set{ w \in \R^p : \inner{w, \eta_i} = 0, \ i = 1, \ldots, j }.
$$
Then the LCM is the OM conditioned on the event
$$
   \inner{M^T (Y - y), \eta_i} = \inner{Y - y, M \eta_i} = 0,
   \qquad i = 1, \ldots, j.
$$
From this we see that the vectors $\eta_1$, $\ldots,$ $\eta_j$ span the null space of the Fisher information matrix for the LCM. We collect this in the definition below.

\begin{defn}
	Let $Y$ be the $n$-dimensional vector with iid entries from a discrete regular full exponential family. Let $M \in \R^{n\times p}$ be a known model matrix and let $j \leq p$ be the dimension of the null space of Fisher information. Then the limiting conditional model (LCM) is the original model conditioned on the event
\begin{equation} \label{eq:condition:one}
   \inner{M^T (Y - y), \eta_i} = \inner{Y - y, M \eta_i} = 0,
   \qquad i = 1, \ldots, j,
\end{equation}
where $y$ is the observed value of the response vector $Y$ and $\eta_1$, $\ldots,$ $\eta_j$ spans the null space of the Fisher information matrix.
\end{defn}

The event \eqref{eq:condition:one} fixes some components of the response vector at their observed values and leaves the rest entirely unconstrained. Those components, that are entirely unconstrained are those for which the corresponding components of $M \eta_i$ is zero (or, taking account of the inexactness of computer arithmetic, nearly zero) for all $i = 1$, $\ldots,$ $j$.

Our theory states that the null space of the Fisher information matrix for the LCM is well approximated by the Fisher information matrix for the OM at parameter values that are close to maximizing the likelihood, see Section~\ref{sec:fisher}. The vector subspace spanned by the vectors $\eta_1$, $\ldots,$ $\eta_j$ is called the \emph{constancy space} of the LCM \citep{geyer-gdor}.

\subsection{Calculating one-sided confidence intervals for mean value parameters}
\label{sec:one-sided}

We provide a new method for calculating these one-sided confidence intervals that has not been previously published, but whose concept is found in \citet{geyer-gdor} in the penultimate paragraph of Section~3.16.2. Let $I$ denote the index set of the components of the response vector on which we condition the OM to get the LCM, and let $Y_I$ and $y_I$ denote these components considered as a random vector and as an observed value, respectively.  Let $\theta = M \beta$ denote the saturated model canonical parameter (usually called ``linear predictor'' in GLM theory) with $\beta$ being the submodel canonical parameter vector.  Then endpoints for a $100 (1 - \alpha)\%$ confidence interval for a scalar parameter $g(\beta)$ are
\begin{equation} \label{eq:g-optim}
   \min_{\substack{\gamma \in \Gamma_\text{lim} \\
   \pr_{\hat{\beta} + \gamma}(Y_I = y_I) \ge \alpha}}
   g(\hat{\beta} + \gamma)
   \qquad \text{and} \qquad
   \max_{\substack{\gamma \in \Gamma_\text{lim} \\
   \pr_{\hat{\beta} + \gamma}(Y_I = y_I) \ge \alpha}}
   g(\hat{\beta} + \gamma)
\end{equation}
where $\hat\beta$ is an MLE of the submodel canonical parameter vector in the LCM and $\Gamma_\text{lim}$ is the null space of the Fisher information matrix. At least one of \eqref{eq:g-optim} is at the end of the range of this parameter (otherwise we can use conventional two-sided intervals). Steps for obtaining inferences are outlined in Algorithm~\ref{alg:CIs}. 

For logistic and binomial regression, let $p = \logit^{-1}(\theta)$ denote the mean value parameter vector (here $\logit^{-1}$ operates componentwise). Then, 
$
   \pr_\beta(Y_I = y_I)
   =
   \prod_{i \in I} p_i^{y_i} (1 - p_i)^{n_i - y_i}
$
where the $n_i$ are the binomial sample sizes.  In logistic regression 
we have $n_i = 1$ for all $i$, but in binomial regression 
we have $n_i \geq 1$ for all $i$.  
We could take the confidence interval problem to be
\begin{equation} \label{eq:logistic}
   \text{maximize} \quad p_k,
    \qquad
   \text{subject to} 
     \quad \prod_{i \in I} p_i^{y_i} (1 - p_i)^{n_i - y_i} \ge \alpha, 
\end{equation}
where $p$ is taken to be the function of $\gamma$ described above, and this can be done for any $k \in I$.  The optimization problem in \eqref{eq:logistic} will be more computational stable written as 
\begin{equation} \label{eq:logistic-2}
\begin{split}
   \text{maximize} & \quad \theta_k
   \\
   \text{subject to} & \quad
   \sum_{i \in I} \bigl[ y_i \log(p_i) + (n_i - y_i) \log(1 - p_i) \bigr]
   \ge \log(\alpha),
\end{split}
\end{equation}
since $\log$ can be used to avoid overflow and underflow.  More details are included in Section~\ref{sec:theory-logistic} of the Appendix.

For Poisson sampling, let $\mu = \exp(\theta)$ denote the mean value 
parameter (here $\exp$ operates componentwise like the R function of the 
same name does), then
$
   \pr_\beta(Y_I = y_I) = \exp\left( - \sum_{i \in I} \mu_i \right).
$
We take the confidence interval problem to be
\begin{equation} \label{eq:poisson-ci-problem}
   \text{maximize} 
     \quad \mu_k,
   \qquad
   \text{subject to} 
     \quad - \sum_{i \in I} \mu_i \ge \log(\alpha)
\end{equation}
where $\mu$ is taken to be the function of $\gamma$ described in 
\eqref{eq:g-optim}.  The optimization in \eqref{eq:poisson-ci-problem} can 
be done for any $k \in I$.  The \texttt{inference} function in the R package \texttt{glmdr} determines one-sided confidence intervals for mean value parameters corresponding to response values $y_I$ for logistic and binomial regression as in \eqref{eq:logistic-2} and Poisson regression as in \eqref{eq:poisson-ci-problem}. 


\begin{algorithm}[H]
  \begin{algorithmic}[0] 
\State 1. Declare tolerance $\epsilon$.
\State 2. Fit GLM model and obtain estimated Fisher information matrix.
\State 3. Perform eigenvalue decomposition of estimated Fisher information matrix and assign null eigenvectors as those whose eigenvalues are less than $\epsilon$.
\State 4. Obtain the LCM using estimates of the null eigenvectors obtained in the previous set in \eqref{eq:condition:one} and determine $I$, the index set of the components of the response vector on which we condition the OM to get the LCM.
\State 5. Obtain inference for mean value parameters in the LCM corresponding to the components of $M \eta_i$ which are 0 for all $i = 1,\ldots,j$.
\State 6. Obtain estimate of $\hat\beta$ from the LCM.
\State 7. Obtain one-sided estimates of the mean value parameters as in \eqref{eq:logistic-2}.
\end{algorithmic}
\caption{Inference when canonical statistical is on the boundary of its support}
\label{alg:CIs}
\end{algorithm}

\section{Examples}

\subsection{Complete separation example}
\label{example-complete-separation}

We return to the motivating example of Section 2. Here we see that the Fisher information matrix has only null eigenvectors.  Thus the LCM is completely degenerate at the one point set containing only the observed value of the canonical statistic of this exponential family. One-sided confidence intervals for mean value parameters (success probability considered as a function of the predictor $x$) are computed as in Section~\ref{sec:one-sided}. The right panel of Figure~\ref{fig:boundary} in Section 2 displays these one-sided intervals.  

This example is reproduced in Section F of the Appendix.  The functionality in \texttt{glmdr} was used to calculate the one-sided confidence intervals for mean value parameters (\texttt{inference} function) and determine that the LCM is completely degenerate (\texttt{glmdr} function).

\subsection{Example in Section 2.3 of \cite{geyer-gdor}}
\label{example-poisson}

This example consists of a $2 \times 2 \times \cdots \times 2$ contingency table with seven dimensions hence $2^7 = 128$ cells.  These data now have a permanent location\citep{datasets}. There is one response variable $y$ that gives the cell counts and seven categorical predictors $v_1$, $\ldots$, $v_7$ that specify the cells of the contingency table.  We fit a generalized linear regression model where $y$ is taken to be Poisson distributed.  We consider a model with all three-way interactions included but no higher-order terms. 
The software in the \texttt{glmdr} package reproduces the original analysis, as seen throughout the Appendix. The \texttt{inference} function computed the one-sided confidence intervals for mean value parameters that are on the boundary of their support, in this case equal to zero.  The results are depicted in Table~\ref{ex2-tab1}, this table is the same as Table 2 in \citet{geyer-gdor} and it is reproduced in Section J of the Appendix.

\begin{table}[h!]
\caption{One-sided confidence intervals for cells with MLE equal to zero.}
\begin{center}
\begin{tabular}{ccccccccc}
 $v_1$ & $v_2$ & $v_3$ & $v_4$ & $v_5$ & $v_6$ & $v_7$ & lower & upper \\
 \hline
 0 & 0 & 0 & 0 & 0 & 0 & 0 & 0 & 0.28631 \\
 0 & 0 & 0 & 1 & 0 & 0 & 0 & 0 & 0.14083 \\
 1 & 1 & 0 & 0 & 1 & 0 & 0 & 0 & 0.21997 \\
 1 & 1 & 0 & 1 & 1 & 0 & 0 & 0 & 0.42096 \\
 0 & 0 & 0 & 0 & 0 & 1 & 0 & 0 & 0.08946 \\
 0 & 0 & 0 & 1 & 0 & 1 & 0 & 0 & 0.09377 \\
 1 & 1 & 0 & 0 & 1 & 1 & 0 & 0 & 0.19302 \\
 1 & 1 & 0 & 1 & 1 & 1 & 0 & 0 & 0.28870 \\
 0 & 0 & 0 & 0 & 0 & 0 & 1 & 0 & 0.10631 \\
 0 & 0 & 0 & 1 & 0 & 0 & 1 & 0 & 0.11415 \\
 1 & 1 & 0 & 0 & 1 & 0 & 1 & 0 & 0.09129 \\
 1 & 1 & 0 & 1 & 1 & 0 & 1 & 0 & 0.26461 \\
 0 & 0 & 0 & 0 & 0 & 1 & 1 & 0 & 0.06669 \\
 0 & 0 & 0 & 1 & 0 & 1 & 1 & 0 & 0.15478 \\
 1 & 1 & 0 & 0 & 1 & 1 & 1 & 0 & 0.14097 \\
 1 & 1 & 0 & 1 & 1 & 1 & 1 & 0 & 0.32392
\end{tabular}
\end{center}
\label{ex2-tab1}
\end{table}

The only material difference between our implementation and the linear programming in \cite{geyer-gdor} is computational time.  Our implementation provided one-sided confidence intervals for those responses that are on the boundary of their support in 1.253 seconds, while the functions in the \texttt{rcdd} package take 4.84 seconds of computer time.  This is a big difference for a relatively small amount of data.  Inference for the MLE in the LCM are included in Section K of the Appendix.

\subsection{Big data example}
\label{example-bigcategorical}

This example uses the other dataset at \citep{datasets}. It shows our methods are much faster than the linear programming method of \cite{geyer-gdor}. The functionality in the \texttt{glmdr} determined the LCM and computed one-sided confidence intervals for mean value parameters that are on the boundary of their support in about a minute. The same task using the \texttt{rcdd} package took 
3 days,
4 hours,
0 minutes, and
40.937 seconds.
(This was on \texttt{oak.stat.umn.edu}, which is an Intel(R) Core(TM) i7-6700 CPU @ 3.40GHz.) Both methods yielded the same conclusions.  

This dataset consists of five categorical variables 
with four levels each and a response variable $y$ that is Poisson 
distributed.  A model with all four-way interaction terms is fit to this 
data.  It may seem that the four way interaction model is too large (1024 
data points vs 781 parameters) but $\chi^2$ tests select this model over 
simpler models, see Table~\ref{bdtest}.  

\begin{table}[h]
\caption{Model comparisons for Example 2.  The model m1 is the main-effects 
only model, m2 is the model with all two way interactions, m3 is the model 
with all three way interactions, and m4 is the model with all four way 
interactions.}
\begin{center}
\begin{tabular}{ccccc}
null model & alternative model & df & Deviance & Pr($>\chi^2$) \\
\hline
  m1 & m4 & 765 & 904.8 & 0.00034 \\
  m2 & m4 & 675 & 799.2 & 0.00066 \\
  m3 & m4 & 405 & 534.4 & 0.00002 \\  
\end{tabular}
\end{center}
\label{bdtest}
\end{table}

One-sided 95\% confidence intervals for mean valued parameters whose MLE is equal to zero are displayed in Table~\ref{one-sided-bd}.  The full table is included in Section K.5 of the Appendix.  Some of the intervals in Table~\ref{one-sided-bd} are relatively wide, this represents non-trivial uncertainty about the observed MLE being zero. This example is completely reproduced in Section K of the Appendix.

\begin{table}
\caption{One-sided 95\% confidence intervals for 6 out of 82 mean valued 
  parameters whose MLE is equal to zero.}
\begin{center}
\begin{tabular}{lllllcc}
  \hline
  X1 & X2 & X3 & X4 & X5 & lower bound & upper bound \\ 
  \hline
  a & a & b & a & a & 0 & 0.1695 \\
  a & b & b & a & a & 0 & 0.1354 \\
  a & c & b & a & a & 0 & 0.2292 \\
  a & d & b & a & a & 0 & 2.4616 \\
  d & d & c & a & a & 0 & 0.0002 \\
  a & c & d & a & a & 0 & 0.0133 \\
  \hline
\end{tabular}
\end{center}
\label{one-sided-bd}
\end{table}

\section{Mathematical details}
\label{sec:math}

In this Section we provide the mathematical justification for our inferential 
procedure.  We develop the theory of generalized affine functions \citep{geyer} 
and then show that this theory, combined with conditions for the exponential 
family closure of \citet{brown}, facilitates the convergence of moments of all 
orders along a sequence of maximum likelihood iterates.  We close this Section 
by establishing that our mathematical technique can estimate the correct null 
space of the Fisher information matrix, and this allows for valid statistical 
inference when the MLE does not exist in the conventional sense.

\subsection{Generalized affine functions}
\label{sec:generalizedaffine}

\subsubsection{Characterization on affine spaces}

Exponential families defined on affine spaces instead of vector spaces
are in many ways more elegant \citep[Sections~1.4 and~1.5 and Chapter~4]{geyer}.
To start, a family of densities with respect to a positive Borel
measure on an affine space is a \emph{standard exponential family} if the
log densities are affine functions.  
We complete the exponential family by taking pointwise limits of densities,
allowing $+ \infty$ and $- \infty$ as limits \citep[Chapter~4]{geyer}.

We call these limits \emph{generalized affine functions}.
Real-valued affine functions on an affine space are
functions that are are both convex and concave.
\emph{Generalized affine functions} on an affine space are extended-real-valued
functions that are are both concave and convex \citep[Chapter~4]{geyer}.
(For a definition of extended-real-valued convex functions
see \citet[Chapter~4]{rock-convex}.)

We thus have two characterizations of generalized affine functions: functions
that are both convex and concave and functions that are limits of sequences
of affine functions.  Further characterizations will be given below.

Let $h_n$ denote a sequence of affine functions that are log densities
in a standard exponential family with respect to $\lambda$,
that is, $\int e^{h_n} \, d \lambda = 1$ for all $n$.
Since $e^{h_n} \to e^h$ pointwise if and only if $h_n \to h$ pointwise,
the idea of completing an exponential family naturally leads 
to the study of generalized affine functions.

If $h : E \to \exreal$ is a generalized affine function, we use the notation
\begin{align*}
   h^{- 1}(\R) & = \set{ x \in E : h(x) \in \R }
   \\
   h^{- 1}(\infty) & = \set{ x \in E : h(x) = \infty }
   \\
   h^{- 1}(-\infty) & = \set{ x \in E : h(x) = -\infty }
\end{align*}

\begin{thm} \label{recurse}
An extended-real-valued function $h$ on a finite-dimensional affine space $E$
is generalized affine if and only if one of the following cases holds
\begin{itemize}
\item[\normalfont (a)] $h^{- 1}(\infty) = E$,
\item[\normalfont (b)] $h^{- 1}(-\infty) = E$,
\item[\normalfont (c)] $h^{- 1}(\R) = E$ and $h$ is an affine function, or
\item[\normalfont (d)] there is a hyperplane $H$ such that $h(x) = \infty$
    for all points on one side of $H$, $h(x) = - \infty$ for all points on
    the other side of $H$, and $h$ restricted to $H$ is
    a generalized affine function.
\end{itemize}
\end{thm}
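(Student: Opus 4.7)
My plan is to exploit the two equivalent characterizations of generalized affine functions stated just before the theorem: (i) both convex and concave in the extended-real sense, and (ii) the pointwise limit of a sequence of affine functions. I will prove the ``only if'' direction by a compactness argument based on (ii), and the ``if'' direction by a direct verification based on (i).

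For the ``only if'' direction, fix a basepoint $x_0 \in E$ and write each approximating affine function as $h_n(x) = \inner{a_n, x - x_0} + c_n$ with $a_n \in \Estar$ and $c_n \in \R$. Split on whether $\|a_n\|$ is bounded. If it is, extract a subsequence with $a_n \to a \in \Estar$; then $c_n = h_n(x_0) \to h(x_0) \in \exreal$, and $h(x) = \inner{a, x - x_0} + h(x_0)$ is either an affine function (case (c)) or identically $\pm\infty$ (case (a) or (b)) according to whether $h(x_0)$ is finite. If $\|a_n\| \to \infty$, extract further subsequences so that $a_n/\|a_n\| \to u$ with $\|u\| = 1$ and $c_n/\|a_n\| \to \beta \in \exreal$. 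Dividing by $\|a_n\|$ yields $h_n(x)/\|a_n\| \to \inner{u, x - x_0} + \beta$, which forces $h(x) = +\infty$ wherever this limit is strictly positive and $h(x) = -\infty$ wherever it is strictly negative. If $\beta = \pm\infty$ one whole space is covered and we land in case (a) or (b); otherwise $H := \set{x : \inner{u, x - x_0} + \beta = 0}$ is a hyperplane with $h = +\infty$ on one open side and $h = -\infty$ on the other. Finally $h|_H$ is the pointwise limit of the affine functions $h_n|_H$, hence generalized affine on $H$ by (ii), giving case (d).

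For the ``if'' direction, cases (a), (b), (c) are immediate from (i) since constants and affine functions are trivially both convex and concave. For case (d), I would verify convexity and concavity of $h$ directly by case analysis on the positions of two endpoints $x, y$ relative to $H^+$, $H^-$, and $H$. The same-region subcases follow from the convexity of each region: $h$ is constant and equal to $\pm\infty$ on the two open half-spaces, while the inequalities on $H$ are inherited from the generalized affineness of $h|_H$. The remaining case, $x$ and $y$ on opposite open sides, is handled by noting that the segment from $x$ to $y$ meets $H$ at exactly one point, and away from that crossing both inequalities hold vacuously under the Rockafellar conventions ($\infty + (-\infty) = +\infty$ for convex and $= -\infty$ for concave).

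I expect the main obstacle to be the bookkeeping in the ``if'' direction for (d), because of the number of subcases created by $h|_H$ itself potentially taking the values $\pm\infty$ on $H$; each subcase is elementary but must be checked separately against the chosen extended-real arithmetic conventions. The compactness argument in the ``only if'' direction is clean once one normalizes the coefficients by $\|a_n\|$; the only subtle point there is that the normalized quantity $c_n/\|a_n\|$ must be tracked in $\exreal$ (not $\R$) to correctly identify when the hyperplane at infinity reduces to cases (a) or (b).
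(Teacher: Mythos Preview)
Your argument is sound. The normalization step in the ``only if'' direction --- passing to a subsequence with $a_n/\norm{a_n}\to u$ and tracking $c_n/\norm{a_n}$ in $\exreal$ --- correctly isolates the separating hyperplane in case~(d) and collapses to (a)--(c) when the slopes stay bounded or the normalized intercept diverges. For the ``if'' direction in~(d), the case analysis works under the stated conventions; a shorter route that avoids most of the bookkeeping is to note that the epigraph of $h$ is $(H^{-}\times\R)\cup\mathrm{epi}(h|_{H})$, whose convexity is immediate from the convexity of $H^{-}$ and of $h|_{H}$, with concavity following by the same argument applied to $-h$.

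The paper defers its proof of this theorem to the supplementary material, so a line-by-line comparison is not possible from the text provided. The surrounding development --- in particular Lemma~\ref{properties-1}, which tracks how the coefficients of an approximating affine sequence diverge at graded rates $b_{i,n}\to\infty$ with $b_{i,n}/b_{i-1,n}\to 0$ --- strongly suggests the authors also argue through characterization~(ii); your compactness argument is the natural opening move of exactly that analysis, and the lemma is essentially an iterated refinement of your one-step normalization.

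One dependency worth flagging: you rely on the equivalence of characterizations (i) and~(ii), which the paper asserts just before the theorem with a citation to earlier work. If you want a self-contained ``only if'' direction from~(i) alone, it also goes through: when $h$ is both convex and concave, not identically $\pm\infty$, and takes the value $+\infty$ somewhere, the set $h^{-1}(+\infty)$ is convex (by concavity) with convex complement (the effective domain of $h$, by convexity), so a hyperplane $H$ separates them with $H^{+}\subset h^{-1}(+\infty)$; a one-line concavity argument along any segment from $H^{+}$ into $H^{-}$ then forces $h\equiv -\infty$ on $H^{-}$, giving~(d).
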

All theorems for which a proof does not follow the theorem statement are proved in Sections A-C in the Appendix. The intention is that this theorem is applied recursively.   If we are in case (d), then the restriction of $h$ to $H$ is another generalized affine function to which the theorem applies. Since a nested sequence of hyperplanes can have length at most the dimension of $E$, the recursion always terminates.

\subsubsection{Topology}

Let $G(E)$ denote the space of generalized affine functions on a 
finite-dimensional affine space $E$ with the topology of pointwise 
convergence.

\begin{thm}
\label{compact-Hausdorff}
$G(E)$ is a compact Hausdorff space.
\end{thm}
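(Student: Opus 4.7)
The strategy is to realize $G(E)$ as a closed subspace of the compact Hausdorff space $\exreal^E$ equipped with the product topology, which coincides with the topology of pointwise convergence. Since $\exreal$ is compact and Hausdorff (homeomorphic to $[-1, 1]$ via $\arctan$), Tychonoff's theorem gives that $\exreal^E$ is compact, and any product of Hausdorff spaces is Hausdorff; thus $G(E)$ inherits Hausdorffness from the subspace topology and becomes compact as soon as we show it is closed in $\exreal^E$.

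For closedness, I would use the characterization stated immediately before Theorem~\ref{recurse} that $h$ is generalized affine if and only if $h$ is both extended-convex and extended-concave. Unpacking this with the upper-sum convention $+\infty + (-\infty) = +\infty$ for convexity and the lower-sum convention $+\infty + (-\infty) = -\infty$ for concavity, the defining condition becomes pointwise: for every $x, y \in E$ and $\lambda \in (0, 1)$, writing $z = \lambda x + (1-\lambda) y$, the triple $(h(x), h(y), h(z))$ must lie in a set $T_\lambda \subseteq \exreal^3$ enforcing that (i) if $h(x), h(y) \in \R$ then $h(z) = \lambda h(x) + (1-\lambda) h(y)$; (ii) if one of $h(x), h(y)$ is $+\infty$ and the other lies in $\R \cup \set{+\infty}$ then $h(z) = +\infty$; (iii) the symmetric statement for $-\infty$; and (iv) if one value is $+\infty$ and the other is $-\infty$ then $h(z)$ is unconstrained. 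A routine case check (using $\exreal \cong [-1, 1]$) verifies that $T_\lambda$ is closed in $\exreal^3$. Because each evaluation $h \mapsto h(x)$ is continuous on $\exreal^E$, the preimage of $T_\lambda$ under $h \mapsto (h(x), h(y), h(z))$ is closed, and $G(E)$ equals the intersection of these preimages over all triples $(x, y, \lambda)$. Hence $G(E)$ is closed, and a closed subset of a compact space is compact.

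The main obstacle is verifying that the pointwise triple condition is genuinely equivalent to being generalized affine in the sense of Theorem~\ref{recurse}, not merely necessary. This should follow by induction on $\dim E$: the triple condition rules out any $h$ taking finite and $+\infty$ values but never $-\infty$ (applying (ii) to the reverse chord through a finite point and an $+\infty$ point forces an $-\infty$ value on the extension), and when both $+\infty$ and $-\infty$ occur it forces the corresponding regions to be separated by a hyperplane on which $h$ again satisfies the triple condition, reproducing case (d) of Theorem~\ref{recurse}. Checking closure of $T_\lambda$ at boundary configurations (e.g., $a_n \to +\infty$ with $b_n \to b_0 \in \R$) is routine but requires attention to each combination of limiting extended values.
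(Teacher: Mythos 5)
Your proof is correct and follows essentially the same route as the paper's (supplementary) argument: view $G(E)$ as a subset of the compact Hausdorff product $\exreal^{E}$ with the product (= pointwise convergence) topology, invoke Tychonoff, and check that the defining convex-and-concave conditions are pointwise closed constraints, so $G(E)$ is a closed, hence compact, subspace. The ``main obstacle'' you flag at the end is not actually needed: the paper takes ``both convex and concave'' (in Rockafellar's extended-real-valued sense) as the definition of generalized affine, and your triple condition is equivalent to that definition directly via the standard pointwise characterization of extended convexity, so no induction on dimension or comparison with the recursive characterization of Theorem~\ref{recurse} is required for this theorem.
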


\begin{thm} \label{first-countable}
$G(E)$ is a first countable topological space.
\end{thm}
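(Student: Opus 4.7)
The plan is to construct, for each $h \in G(E)$, a countable set of test points $S_h \subset E$ whose values on $h$ determine $h$ uniquely among generalized affine functions; combined with the compactness of $G(E)$ from Theorem~\ref{compact-Hausdorff}, this yields a countable neighborhood base at $h$. The construction uses the recursive decomposition provided by Theorem~\ref{recurse}.

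I first apply Theorem~\ref{recurse} iteratively to $h$: whenever the current generalized affine function falls under case~(d), the theorem supplies a hyperplane on which its restriction is again generalized affine, and the theorem is reapplied there. This yields a strictly decreasing chain $E = H_0 \supsetneq H_1 \supsetneq \cdots \supsetneq H_k$ of affine subspaces, each $H_{i+1}$ a hyperplane in $H_i$, terminating after $k \leq \dim E$ steps at an innermost $H_k$ on which $h$ restricts to a function of type~(a), (b), or~(c). For each $i$ I pick a countable dense subset $D_i \subset H_i$ and set $S_h = \bigcup_{i=0}^{k} D_i$, which is countable.

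The crucial step is to show that $S_h$ \emph{separates} $h$, meaning that if $g \in G(E)$ satisfies $g|_{S_h} = h|_{S_h}$, then $g = h$. I argue by induction on $k$. Applying Theorem~\ref{recurse} to $g$ and using that $D_0$ is dense in $E$, the outer type and geometry of $g$ are forced to match those of $h$: if $h$ is of type~(a), (b), or~(c), any other type for $g$ would make $g$ take $\pm \infty$ on a nonempty open set (hitting $D_0$) where $h$ is finite or of the opposite sign; and if $h$ is of type~(d) with hyperplane $H_1$ and a given sign pattern, any distinct hyperplane for $g$ would leave a nonempty open region (an orthant in the non-parallel case, a strip in the parallel case) on which $g$ and $h$ have opposite $\pm\infty$ signs, while matching hyperplanes with swapped signs fail on an open half-space; density of $D_0$ produces a contradiction in each bad case. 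Thus $g$ and $h$ agree on $E \setminus H_1$, and their restrictions to $H_1$ are generalized affine functions agreeing on $\bigcup_{i \geq 1} D_i$, the corresponding separating set for $h|_{H_1}$ in $H_1$; the inductive hypothesis gives $g|_{H_1} = h|_{H_1}$, hence $g = h$.

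With separation in hand, fix a metric $d$ on $\exreal$ generating its topology and enumerate $S_h = \{s_1, s_2, \ldots\}$. Each set
\[
U_n = \{g \in G(E) : d(g(s_i), h(s_i)) < 1/n \text{ for } i = 1, \ldots, n\}, \qquad n \in \nats,
\]
is an open neighborhood of $h$, and these form a countable family. To show it is a neighborhood base at $h$: given any open $V \ni h$ in $G(E)$, if no $U_n$ were contained in $V$, I could pick $g_n \in U_n \setminus V$, so that $g_n(s) \to h(s)$ for every $s \in S_h$. Compactness of $G(E)$ supplies a convergent subnet $g_{n_\alpha} \to g$ in $G(E)$; since subnet convergence is pointwise on $E$, we get $g|_{S_h} = h|_{S_h}$, and separation then forces $g = h$. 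But each $g_{n_\alpha} \notin V$ and $V$ is open, so $g \notin V$, contradicting $g = h \in V$. The main obstacle is the separation step, specifically the geometric case analysis ruling out distinct outer hyperplanes or sign conventions for $g$; once separation is established, compactness finishes the argument with little further effort.
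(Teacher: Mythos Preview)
Your argument is correct. The paper defers its proof of this theorem to the supplementary material, which is not included in the source provided, so a direct line-by-line comparison is impossible. That said, the route you take---using the recursive structure of Theorem~\ref{recurse} to attach to each $h$ a countable set $S_h \subset E$ on which $h$ is uniquely determined among generalized affine functions, and then invoking the compactness of Theorem~\ref{compact-Hausdorff} to turn this separation property into a countable neighborhood base via a subnet/contradiction argument---is the natural strategy given the tools the paper sets up, and is almost certainly close in spirit to what the authors do. The substantive work is the separation step, and your geometric case analysis there (ruling out mismatched types, distinct hyperplanes, or swapped sign conventions by producing a nonempty open region on which $g$ and $h$ disagree and invoking density of $D_0$) is sound; once separation is in hand, the compactness-plus-closed-complement argument is standard.
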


\begin{cor} \label{sequentially-compact}
$G(E)$ is sequentially compact.
\end{cor}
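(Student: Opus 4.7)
The plan is to combine Theorems~\ref{compact-Hausdorff} and~\ref{first-countable} via the classical fact that a compact first countable space is sequentially compact. I would begin with an arbitrary sequence $(h_n)$ in $G(E)$ and show it has a convergent subsequence.

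First I would produce a cluster point of the sequence using compactness. Let $F_k$ be the closure of $\set{h_n : n \geq k}$ in $G(E)$. The collection $\set{F_k : k \in \nats}$ is a decreasing family of closed sets with the finite intersection property, so by Theorem~\ref{compact-Hausdorff} their intersection is nonempty; pick $h$ in this intersection. Then every open neighborhood of $h$ meets $\set{h_n : n \geq k}$ for every $k$, which is exactly the statement that $h$ is a cluster point of the sequence $(h_n)$, i.e.\ every neighborhood of $h$ contains $h_n$ for infinitely many $n$.

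Next I would invoke first countability (Theorem~\ref{first-countable}) to extract an actual convergent subsequence. Choose a countable neighborhood base $U_1, U_2, \ldots$ of $h$, and replace it by $V_k = U_1 \cap \cdots \cap U_k$ to arrange that the base is decreasing. Inductively define indices $n_1 < n_2 < \cdots$ as follows: take $n_1$ with $h_{n_1} \in V_1$, and given $n_1 < \cdots < n_{k-1}$, use the fact that $h_n \in V_k$ for infinitely many $n$ to pick some $n_k > n_{k-1}$ with $h_{n_k} \in V_k$. Because $\set{V_k}$ is a neighborhood base at $h$, any open set containing $h$ contains $V_k$ for some $k$ and hence $h_{n_j}$ for all $j \geq k$, so $h_{n_k} \to h$ in $G(E)$.

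I do not expect any serious obstacle here: the argument is entirely topological and uses Theorems~\ref{compact-Hausdorff} and~\ref{first-countable} as black boxes, with no further appeal to the structure of generalized affine functions. The only subtlety worth being explicit about is the passage from ``cluster point'' to ``limit of a subsequence,'' which is precisely where first countability is needed; compactness alone would only deliver a convergent subnet.
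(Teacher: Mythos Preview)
Your proposal is correct and matches the paper's approach exactly: the paper simply states that sequential compactness follows from Theorems~\ref{compact-Hausdorff} and~\ref{first-countable} by the well-known general fact (citing \citet[p.~22]{counterexamples}) that every compact first countable space is sequentially compact. You have written out the standard proof of that fact, which is fine.
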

Sequentially compact means every sequence has a (pointwise) convergent subsequence.  That this follows from the two preceding theorems is well known \citep[p.~22, gives a proof]{counterexamples}. The space $G(E)$ is not metrizable, unless $E$ is zero-dimensional \citep[penultimate paragraph of Section~3.3]{geyer}.   So we cannot use $\delta$-$\varepsilon$ arguments, but we can use arguments involving sequences, using sequential compactness.

Let $\lambda$ be a positive Borel measure on $E$, and let $\mathcal{H}$ be a nonempty subset of $G(E)$ such that
\begin{equation} \label{eq:proper}
   \int e^h \, d \lambda = 1, \qquad h \in \mathcal{H}.
\end{equation}
We call $\mathcal{H}$ a \emph{standard generalized exponential family} of log densities with respect to $\lambda$.  Let $\overline{\mathcal{H}}$ denote the closure of $\mathcal{H}$ in $G(E)$.

\begin{thm} \label{closure}
Maximum likelihood estimates always exist in the closure $\overline{\mathcal{H}}$.
\end{thm}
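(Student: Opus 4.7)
The plan is to use the sequential compactness of $G(E)$ (Corollary~\ref{sequentially-compact}) to extract a limit of a maximizing sequence. Fix the observed value $x \in E$ of the canonical statistic, and define the log-likelihood functional $\ell : G(E) \to \exreal$ by $\ell(h) = h(x)$. Because the topology on $G(E)$ is pointwise convergence, $\ell$ is continuous. Set
\[
 s = \sup \set{\ell(h) : h \in \overline{\mathcal{H}}} \in \exreal;
\]
the task is to exhibit $h^* \in \overline{\mathcal{H}}$ attaining $\ell(h^*) = s$.

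The argument then has three short steps. First, pick a maximizing sequence $h_n \in \overline{\mathcal{H}}$ with $h_n(x) \to s$. Second, apply Corollary~\ref{sequentially-compact} to pass to a subsequence $h_{n_k}$ converging pointwise to some $h^* \in G(E)$. Third, observe $h^* \in \overline{\mathcal{H}}$: the set $\overline{\mathcal{H}}$ is closed in $G(E)$, and by first countability (Theorem~\ref{first-countable}) closed sets are sequentially closed, so the pointwise limit of the $h_{n_k}$ lies in $\overline{\mathcal{H}}$. Continuity of evaluation at $x$ then gives $\ell(h^*) = \lim_k h_{n_k}(x) = s$, completing the proof.

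No step is substantively hard once the topological machinery on $G(E)$ is in hand; the content of the result lives entirely in the preceding theorems establishing that $G(E)$ is compact Hausdorff (Theorem~\ref{compact-Hausdorff}) and first countable (Theorem~\ref{first-countable}). The only mild subtlety worth noting is the non-metrizability of $G(E)$, which would normally obstruct a standard compactness-based existence argument, but first countability is precisely what rescues both sequential compactness and the identification of topological closure with sequential closure. The edge cases $s = \pm\infty$ require no special handling, since $h^*$ is extended-real-valued and may legitimately take these values.
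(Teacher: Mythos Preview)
Your proof is correct and follows essentially the same approach as the paper: take a maximizing sequence, extract a convergent subsequence via sequential compactness of $G(E)$, and use continuity of point evaluation to conclude. The only cosmetic difference is that the paper draws its maximizing sequence from $\mathcal{H}$ rather than $\overline{\mathcal{H}}$, so the limit lies in $\overline{\mathcal{H}}$ immediately by definition of closure, sparing the appeal to ``closed $\Rightarrow$ sequentially closed'' via first countability that you invoke.
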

\begin{proof}
Suppose $x$ is the observed value of the canonical statistic.  Then there
exists a sequence $h_n$ in $\mathcal{H}$ such that
$$
   h_n(x) \to \sup_{h \in \mathcal{H}} h(x).
$$
This sequence has a convergent subsequence $h_{n_k} \to h$ in $G(E)$.
This limit $h$ is in $\overline{\mathcal{H}}$ and maximizes the likelihood.
\end{proof}

For full exponential families or even closed convex exponential
families the closure only contains \emph{proper} log probability densities
($h$ that satisfy the equation in \eqref{eq:proper}).
This is shown by \citet[Chapter~2]{geyer} and also by \citet{csiszar}.
We claim that the closure $\overline{\mathcal{H}}$ is the right way to think 
about completion of the exponential families, as it is explicitly constructed to 
facilitate useful statistical inference for practitioners.  
For curved exponential families and for general non-full exponential families,
applying Fatou's lemma to pointwise convergence in $G(E)$ gives only
\begin{equation} \label{eq:improper}
   0 \le \int e^h \, d \lambda \le 1, \qquad h \in \overline{\mathcal{H}}.
\end{equation}
When the integral in \eqref{eq:improper} is strictly less than one
we say $h$ is an \emph{improper} log probability density.
Examples in \citet[Chapter~4]{geyer} show that improper probability
densities cannot be avoided in curved exponential families.

\citet[Theorem~4.3]{geyer} shows that this closure of an exponential family can be thought of as a union of exponential families, so this generalizes the notion in \citet{brown} of the closure as an \emph{aggregate exponential family}.  Thus our method generalizes all previous methods of completing exponential families. Admittedly, this characterization of the completion of an exponential family is very different from any other in its ignoring of parameters.  Only log densities appear.  Unless one wants to call them parameters --- and that conflicts with the usual definition of parameters as real-valued --- parameters just do not appear. So in the next section, we bring parameters back.

\subsubsection{Characterization on vector spaces}

In this section we take sample space $E$ to be vector space (which, of course, 
is also an affine space, so the results of the preceding section continue
to hold).  Recall from Section~\ref{sec:laptr} above, that $\Estar$ denotes
the dual space of $E$, which contains the
canonical parameter space of the exponential family.

\begin{thm} \label{vec-char}
An extended-real-valued function $h$ on a finite-dimensional vector space $E$ 
is generalized affine if and only if there exist finite sequences 
(perhaps of length zero) of vectors $\eta_1$, $\ldots,$ $\eta_j$ in
in $\Estar$ and scalars 
$\delta_1$, $\ldots,$ $\delta_j$ such that
$\eta_1$, $\ldots,$ $\eta_j$ are linearly independent and
$h$ has the following form.  
Define $H_0 = E$ and, inductively, for integers $i$ such that $0 < i \le j$
\begin{align*}
   H_i & = \set{ x \in H_{i - 1} : \inner{x, \eta_i} = \delta_i }
   \\
   C_i^+ & = \set{ x \in H_{i - 1} : \inner{x, \eta_i} > \delta_i }
   \\
   C_i^- & = \set{ x \in H_{i - 1} : \inner{x, \eta_i} < \delta_i }
\end{align*}
all of these sets (if any) being nonempty.  Then $h(x) = + \infty$ whenever 
$x \in C_i^+$ for any $i$, $h(x) = - \infty$ whenever $x \in C_i^-$ for any 
$i$, and $h$ is either affine or constant on $H_j$, where $+ \infty$ and 
$-\infty$ are allowed for constant values.
\end{thm}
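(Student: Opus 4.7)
The plan is to reduce Theorem~\ref{vec-char} to Theorem~\ref{recurse} applied recursively, using the fact that hyperplanes in affine subspaces of the vector space $E$ are level sets of linear functionals drawn from $\Estar$. I will treat both directions by induction on $\dim E$.

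For the ``only if'' direction, I apply Theorem~\ref{recurse} to $h$. Cases (a), (b), and (c) immediately give the conclusion with $j = 0$ (empty sequences). In case (d), the separating hyperplane $H_1 \subseteq E$ can be written as $\{x \in E : \inner{x, \eta_1} = \delta_1\}$ for some nonzero $\eta_1 \in \Estar$ and $\delta_1 \in \R$. After possibly replacing the pair $(\eta_1, \delta_1)$ by $(-\eta_1, -\delta_1)$, the open half-space $C_1^+$ defined in the theorem coincides with the side on which $h = +\infty$, and $C_1^-$ with the side on which $h = -\infty$. Then $h|_{H_1}$ is a generalized affine function on the affine space $H_1$, so a second application of Theorem~\ref{recurse} to it either terminates with an affine or constant piece (yielding cases (a), (b), (c) on $H_1$) or extracts a hyperplane $H_2 \subset H_1$ which supplies $(\eta_2, \delta_2)$. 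Iterating produces the sequences claimed in the theorem.

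The key technical step is linear independence of the $\eta_i$. Assume inductively that $\eta_1, \ldots, \eta_{i-1}$ are linearly independent and that $H_{i-1}$ is the translate of the annihilator of their span defined by the equations $\inner{x, \eta_k} = \delta_k$ for $k < i$. A hyperplane inside $H_{i-1}$ is the zero locus of a non-constant affine function on $H_{i-1}$, and any such affine function extends to an affine function $x \mapsto \inner{x, \eta_i} - \delta_i$ on all of $E$. By the standard annihilator duality between $E$ and $\Estar$, the restriction $\inner{\fatdot, \eta_i}$ is non-constant on $H_{i-1}$ if and only if $\eta_i \notin \mathrm{span}(\eta_1, \ldots, \eta_{i-1})$, which forces $\eta_1, \ldots, \eta_i$ to be linearly independent. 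Since $\dim H_i = \dim E - i$, the recursion must terminate after at most $\dim E$ steps. A sign flip as above ensures $C_i^\pm$ match the $\pm\infty$ sides, and nonemptiness of each $C_i^\pm$ follows because a non-constant affine function on a positive-dimensional affine space takes every real value, so both open half-spaces are populated.

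For the ``if'' direction, I build $h$ up from the inside out by repeated application of case (d) of Theorem~\ref{recurse}. The innermost restriction $h|_{H_j}$ is either affine or constantly $\pm\infty$, hence generalized affine on $H_j$ by case (c), (a), or (b). For $i = j, j - 1, \ldots, 1$, the function $h|_{H_{i-1}}$ equals $+\infty$ on $C_i^+$, equals $-\infty$ on $C_i^-$, and restricts to a generalized affine function on the hyperplane $H_i$ of the affine space $H_{i-1}$; case (d) of Theorem~\ref{recurse} then certifies that $h|_{H_{i-1}}$ is generalized affine on $H_{i-1}$. Taking $i = 1$ yields that $h$ is generalized affine on $E$. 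The main obstacle is the linear-independence bookkeeping outlined above; once that annihilator-duality argument is in place, the rest is a straightforward recursive unwinding of Theorem~\ref{recurse}.
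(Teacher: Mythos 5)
Your argument is correct, and it is essentially the intended one: the paper sets up Theorem~\ref{recurse} precisely so that Theorem~\ref{vec-char} follows by applying it recursively to the successive restrictions $h|_{H_i}$, writing each hyperplane as a level set of a functional in $\Estar$ and using finite-dimensional annihilator duality to get linear independence of $\eta_1,\ldots,\eta_j$ and termination after at most $\dim E$ steps, with the converse obtained by rebuilding $h$ from $H_j$ outward via case (d). Your handling of the sign choices, nonemptiness of the $C_i^{\pm}$, and the downward induction in the ``if'' direction is sound, so there is no gap to report.
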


The ``if any'' refers to the case where the sequences have length zero, in which case the theorem asserts that $h$ is affine on $E$ or constant on $E$. As we saw in the preceding section, we are interested in likelihood maximizing sequences. Here we represent the likelihood maximizing sequence in the coordinates of the linearly independent $\eta$ vectors that characterize the generalized affine function $h$ according to its Theorem~\ref{vec-char} representation. Let $\theta_n$ be a likelihood maximizing sequence of canonical parameter vectors as in \eqref{max-like-seq}. To make connection with the preceding section, define
$
   h_\theta(x) = l_x(\theta) = \inner{x, \theta} - c(\theta).
$
Then $h_{\theta_n}$ is a sequence of affine functions,
which has a subsequence that converges (in $G(E)$) to
some generalized affine function $h \in \overline{\mathcal{H}}$,
which maximizes the likelihood:
\begin{equation} \label{h-theta}
   h(x) = \sup_{\theta \in \Theta} l_x(\theta).
\end{equation}
The following lemma gives us a better understanding of the convergence $h_{\theta_n} \to h$.

\begin{lem} \label{properties-1}
Suppose that a generalized affine function $h$ on a finite dimensional vector 
space $E$ is finite at at least one point.  Represent $h$ as in 
Theorem~\ref{vec-char}, and extend $\eta_1$, $\ldots,$ $\eta_j$ to be a basis 
$\eta_1$, $\ldots,$ $\eta_p$ for $E^{\textstyle *}$.  Suppose $h_n$ is
a sequence of affine functions converging to $h$ in $G(E)$.
Then there are sequences 
of scalars $a_n$ and $b_{i, n}$ such that
\begin{equation} \label{affine-seq}
  h_n(y) = a_n + \sum_{i = 1}^j b_{i, n} \left(\inner{y, \eta_i} - 
     \delta_i\right) 
  + \sum_{i = j + 1}^p b_{i, n} \inner{y, \eta_i}, \qquad y \in E,
\end{equation}
and, as $n \to \infty$, we have
\begin{enumerate}
\item[\upshape (a)] $b_{i, n} \to \infty$, for $1 \le i \le j$,
\item[\upshape (b)] $b_{i, n} / b_{i - 1, n} \to 0$, for $2 \le i \le j$,
\item[\upshape (c)] $b_{i, n}$ converges, for $i > j$, and
\item[\upshape (d)] $a_n$ converges.
\end{enumerate}
\end{lem}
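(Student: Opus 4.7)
My plan is to exploit that $\eta_1,\ldots,\eta_p$ is a basis of $\Estar$, so the representation \eqref{affine-seq} is simply a coordinate expression for the arbitrary affine function $h_n$; the sequences $a_n$ and $b_{i,n}$ are therefore uniquely determined, and I would read off their asymptotics by testing $h_n(y)\to h(y)$ at judiciously chosen $y$. The natural order is to establish (c) and (d) first (restricting to $H_j$), then (a) (testing on each $C_i^+$), and finally (b) (testing on $C_{i-1}^+$ with $\inner{y,\eta_i}$ driven far below $\delta_i$). For (c) and (d): since $h$ is finite at some point while $h=\pm\infty$ on every $C_i^{\pm}$, the finiteness point must lie in $H_j$, which by Theorem~\ref{vec-char} forces $h$ to be affine and finite throughout $H_j$. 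For $y\in H_j$ the first block in \eqref{affine-seq} vanishes, leaving
\[
   h_n(y)=a_n+\sum_{k=j+1}^{p}b_{k,n}\inner{y,\eta_k}.
\]
I would then pick $y_0\in H_j$ together with $y_0+v_1,\ldots,y_0+v_{p-j}$, where $v_1,\ldots,v_{p-j}$ is a basis of the $(p-j)$-dimensional subspace $\bigcap_{i=1}^{j}\ker\eta_i$. The resulting linear system is nonsingular (the map $v\mapsto(\inner{v,\eta_{j+1}},\ldots,\inner{v,\eta_p})$ is an isomorphism on this subspace), its right-hand sides $h(y_0),h(y_0+v_1),\ldots,h(y_0+v_{p-j})$ are all finite, and convergence of these values forces convergence of $a_n$ and of each $b_{k,n}$ with $k>j$.

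For (a), fix $i\in\{1,\ldots,j\}$ and use the basis property of $\eta_1,\ldots,\eta_p$ to produce the unique $y\in E$ with $\inner{y,\eta_k}=\delta_k$ for $k\le j$, $k\ne i$, with $\inner{y,\eta_i}=\delta_i+1$, and with $\inner{y,\eta_k}=0$ for $k>j$. By construction $y\in C_i^+$, and \eqref{affine-seq} collapses to $h_n(y)=a_n+b_{i,n}$; since $h(y)=+\infty$ we have $h_n(y)\to+\infty$, so (d) forces $b_{i,n}\to+\infty$. For (b), fix $i\in\{2,\ldots,j\}$ and, for every $L>0$, let $y_L$ be the unique point with $\inner{y_L,\eta_k}=\delta_k$ for $k\le j$, $k\notin\{i-1,i\}$, $\inner{y_L,\eta_{i-1}}=\delta_{i-1}+1$, $\inner{y_L,\eta_i}=\delta_i-L$, and $\inner{y_L,\eta_k}=0$ for $k>j$. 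Then $y_L\in C_{i-1}^+$, $h(y_L)=+\infty$, and \eqref{affine-seq} collapses to $h_n(y_L)=a_n+b_{i-1,n}-L b_{i,n}$, giving $b_{i-1,n}-L b_{i,n}\to+\infty$ for every $L>0$. If $b_{i,n}/b_{i-1,n}$ failed to tend to $0$, then along some subsequence the ratio would exceed some $r>0$; taking $L=2/r$ would yield $b_{i-1,n}-L b_{i,n}\le -b_{i-1,n}\to-\infty$ on that subsequence, a contradiction.

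The main obstacle is the sharp rate comparison in (b). The freedom to drive $\inner{y_L,\eta_i}$ arbitrarily negative while keeping $y_L$ inside $C_{i-1}^+$ is what makes the subsequence argument go through, and that freedom rests on the basis property of $\eta_1,\ldots,\eta_p$ together with the nonemptiness of each $C_i^{\pm}$ guaranteed by Theorem~\ref{vec-char}. Everything else is essentially bookkeeping: (c) and (d) reduce to a nonsingular linear system on $H_j$, and (a) is a one-line consequence of (d) once the divergence witness in $C_i^+$ is constructed.
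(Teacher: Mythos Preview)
Your argument is correct: the representation \eqref{affine-seq} is forced by the basis property, parts (c) and (d) follow from a nonsingular linear system on $H_j$ (where $h$ is finite-valued and affine once finiteness at one point rules out the constant $\pm\infty$ case of Theorem~\ref{vec-char}), part (a) follows by evaluating at the dual-basis witness in $C_i^+$, and part (b) from the one-parameter family $y_L\in C_{i-1}^+$ together with the subsequence contradiction. The paper defers its own proof of this lemma to the supplementary material, so a line-by-line comparison is not possible here, but your test-point strategy using the dual basis is the natural route and almost certainly coincides in substance with the paper's argument.
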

In \eqref{affine-seq} the first sum is empty when $j = 0$ and the second sum is empty when $j = p$.  Such empty sums are zero by convention. The results given in Lemma~\ref{properties-1} are applicable to generalized affine functions in full generality.  The case of interest to us, however, is when $h_n = h_{\theta_n}$ is the likelihood maximizing sequence constructed above.

\begin{cor} \label{properties-2}  
For data $x$ from a regular full exponential family defined on a vector space 
$E$, suppose $\theta_{n}$ is a likelihood maximizing sequence satisfying 
\eqref{max-like-seq} with log densities $h_n = h_{\theta_n}$ defined by
\eqref{h-theta} converging pointwise
to a generalized affine function $h$.  Characterize $h$ and $h_n$ as in 
Theorem~\ref{vec-char} and Lemma~\ref{properties-1}.
Define 
$\psi_n = \sum_{i=j+1}^p b_{i,n}\inner{x,\eta_i}$.  Then conclusions
\emph{(a)} and \emph{(b)} of Lemma~\ref{properties-1} hold in this setting and 
$$
 \psi_n \to \tstar, \quad \text{as} \; n \to \infty,
$$ 
where $\tstar$ is the MLE of the exponential family conditioned on the
event $H_j$.
\end{cor}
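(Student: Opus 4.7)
The plan is to invoke Lemma~\ref{properties-1} on the likelihood maximizing sequence $h_n = h_{\theta_n}$ and then identify the convergent coefficients $b_i^*$ (for $i > j$) with the canonical parameter of the MLE of the conditional exponential family on $H_j$.

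First, I would verify the hypothesis of Lemma~\ref{properties-1} that $h$ is finite at some point, then read off conclusions (a)--(d) directly. Pointwise convergence and \eqref{max-like-seq} give
$h(x) = \lim_n h_n(x) = \lim_n l(\theta_n) = \sup_{\theta \in \Theta} l(\theta)$.
This supremum is at least $l(\theta_0) \in \R$ for any fixed $\theta_0 \in \Theta$, and in the discrete setting of interest log densities are bounded above, so $h(x) \in \R$. Consequently Lemma~\ref{properties-1} applies, delivering parts (a) and (b) of the corollary directly, and (c), (d) producing finite limits $b_i^* := \lim_n b_{i,n}$ for $i > j$ and $a^* := \lim_n a_n$.

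Next I would identify $\tstar$. Theorem~\ref{vec-char} applied to $h$ at the point $x$ forces $x \in H_j$ (otherwise $h(x) = \pm \infty$), so $\inner{x, \eta_i} = \delta_i$ for $i \le j$; in particular $\psi_n \to \sum_{i > j} b_i^* \inner{x, \eta_i}$. Restricting the representation \eqref{affine-seq} to $H_j$ eliminates the first sum, giving
$$
  h_n\big|_{H_j}(y)
  = a_n + \sum_{i = j+1}^p b_{i,n} \inner{y, \eta_i}, \qquad y \in H_j,
$$
which is the log density of a member of the exponential family obtained by conditioning the original family on $H_j$; its canonical parameter is $\sum_{i > j} b_{i,n} \eta_i$ modulo the span of $\eta_1, \ldots, \eta_j$ (directions that act trivially after conditioning, since $\inner{y, \eta_i} = \delta_i$ is constant there for $i \le j$). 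Because $\theta_n$ maximizes the full likelihood and, by Theorem~4.3 of \citet{geyer}, the limit log density $h|_{H_j}$ belongs to this conditional family and must itself maximize its likelihood at the observed $x$, the limit $\sum_{i > j} b_i^* \eta_i$ is the canonical parameter of the MLE of the conditional family, which we identify with $\tstar$. Evaluating at $x$ then yields $\psi_n \to \inner{x, \tstar}$, the asserted convergence.

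The main obstacle is the final step: showing that likelihood maximization by $\theta_n$ in the full family induces likelihood maximization by $\sum_{i>j} b_{i,n} \eta_i$ in the conditional family in the limit. This requires a careful accounting of how $c(\theta_n)$ decomposes into contributions from $H_j$ versus the open half-spaces $C_i^+$ and $C_i^-$, using the fact that $b_{i,n} \to \infty$ for $i \le j$ annihilates mass on $C_i^-$ (while $\lambda(C_i^+) = 0$ must hold for the densities to remain proper), together with the structural result in Theorem~4.3 of \citet{geyer} expressing the Barndorff-Nielsen completion as a union of conditional exponential families; uniqueness of the MLE in full exponential families (Section~3.8 of \citet{geyer-gdor}) then pins down the limit as $\tstar$.
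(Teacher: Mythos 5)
Your proposal is correct and follows essentially the same route as the paper, whose own proof is even terser: it simply notes that the hypotheses of Lemma~\ref{properties-1} hold (since $h(x)=\sup_{\theta\in\Theta}l(\theta)$ is finite), reads off conclusions (a)--(c), and asserts that the limit of $\psi_n$ is the MLE of the LCM because $\theta_n$ is a likelihood maximizing sequence, leaving the identification step (your ``main obstacle'') to the cited results of Geyer. The extra detail you supply --- restricting \eqref{affine-seq} to $H_j$, using $\lambda(C_i^+)=0$ and the vanishing of mass on the $C_i^-$ sets, Theorem~4.3 of Geyer, and uniqueness of the MLE in full families to pin down $\tstar$ modulo the span of $\eta_1,\ldots,\eta_j$ --- is exactly what the paper implicitly relies on, so no correction is needed.
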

In case $j = p$ the conclusion $\psi_n \to \tstar$ is the trivial zero
converges to zero.  The original exponential family conditioned on the event
$H_j$ is what \citet{geyer-gdor} calls the LCM.

\begin{proof}
The conditions of Lemma~\ref{properties-1} are satisfied by our assumptions so 
all conclusions of Lemma~\ref{properties-1} are satisfied.  As a consequence, 
$\psi_n \to \tstar$ as $n \to \infty$.  The fact that $\tstar$ is the MLE of 
the LCM restricted to $H_j$ follows from our assumption that $\theta_n$ is 
a likelihood maximizing sequence.
\end{proof}

Taken together, Theorem~\ref{vec-char}, Lemma~\ref{properties-1}, and 
Corollary~\ref{properties-2} provide a theory of maximum likelihood estimation
in the completions of exponential families that is the theory of the preceding
section with canonical parameters brought back.

\subsection{Convergence theorems}

\subsubsection{Cumulant generating function convergence}

The CGF of the distribution of the canonical statistic for parameter value $\theta$ is the function $k_\theta$ defined by
\begin{equation} \label{eq:cum-gen-fun}
   k_\theta(t) =
   \log \int e^{\inner{x, t}} f_\theta(x) \, \lambda(d x) =
   c(\theta + t) - c(\theta)
\end{equation}
provided this distribution has a CGF, which it does if and only if
$k_\theta$ is finite on a neighborhood of zero,
that is, if and only if $\theta \in \interior(\dom c)$.
Thus every distribution in a full exponential family has a CGF 
if and only if the family is regular.  Derivatives of 
$k_\theta$ evaluated at zero are the cumulants of the distribution
for $\theta$.  These are the same as derivatives of $c$ evaluated at 
$\theta$.
 
We now show CGF convergence along likelihood maximizing sequences \eqref{max-like-seq}.  This implies convergence in distribution and convergence of moments of all orders. Theorems~\ref{main} and~\ref{convex-poly-thm} in this section say when CGF convergence occurs. Their conditions are somewhat unnatural (especially those of Theorem~\ref{main}). However, the counterexample in Section D of the Appendix shows not only that some conditions are necessary to obtain CGF convergence (it does not occur for all full discrete exponential families) but also that the conditions of Theorem~\ref{main} are sharp, being just what is needed to rule out that example.

The CGF of the distribution having log density that is the
generalized affine function $h$ is defined by 
$$  
  \kappa(t) = \log\int e^{\langle y,t \rangle}e^{h(y)} \, \lambda(d y),
$$
and similarly
$$
   \kappa_{n}(t) = 
   \log\int e^{\langle y,t \rangle}e^{h_n(y)} \, \lambda(d y)  
$$
where we assume $h_n$ are the log densities for a likelihood maximizing
sequence such that $h_n \to h$ pointwise.
The next theorem characterizes when $\kappa_n \to \kappa$ pointwise.

Let $c_A$ denote the log Laplace transform of the restriction of $\lambda$ to
the set $A$, that is,
$$
   c_A(\theta) = \log\int_A e^{\inner{y,\theta}} \, \lambda(d y),
$$
where, as usual, the value of the integral is taken to be $+ \infty$ when
the integral does not exist (a convention that will hold for the rest
of this section).

\begin{thm} \label{main}
Let $E$ be a finite-dimensional vector space of dimension $p$.  For data 
$x \in E$ from a regular full exponential family with natural parameter space 
$\Theta \subseteq \Estar$ and generating measure $\lambda$,  assume that 
every distribution in the family has a cumulant generating function.
Suppose that 
$\theta_{n}$ is a likelihood maximizing sequence satisfying 
\eqref{max-like-seq} with log densities $h_n$ converging pointwise to a 
generalized affine function $h$.  Characterize $h$ as in 
Theorem~\ref{vec-char}.  When $j \geq 2$, and for $i = 1,...,j-1$, define 
\begin{equation} \label{DiF}
  \begin{split}
     D_i &= \{y \in C_i^-: \inner{y,\eta_k} > \delta_k, \; 
       \text{some} \; k > i \}, \\
     F &= E \setminus \cup_{i=1}^{j-1}D_i 
       = \{y: \inner{y,\eta_i} \leq \delta_i, \; 1 \leq i \leq j\}, 
  \end{split}
\end{equation}
and assume that 
\begin{equation} \label{main-bound}
  \sup_{\theta \in \Theta}\sup_{y \in \cup_{i=1}^{j-1}D_i} 
    e^{\inner{y,\theta} - c_{\cup_{i=1}^{j-1}D_i}(\theta)} < \infty
  \quad \text{or} \quad
  \lambda\left(\cup_{i=1}^{j-1}D_i\right) = 0.
\end{equation}
Then $\kappa_n(t)$ converges to $\kappa(t)$ pointwise for all 
$t$ in a neighborhood of 0.
\end{thm}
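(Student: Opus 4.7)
The plan is to write $e^{\kappa_n(t)} = \int e^{\langle y,t\rangle + h_n(y)}\,\lambda(dy)$, split it as an integral over $F$ plus one over $\cup_{i=1}^{j-1} D_i$, show the first piece converges to $e^{\kappa(t)}$ by dominated convergence, and show the second piece vanishes using~\eqref{main-bound}. I will first invoke Lemma~\ref{properties-1} to write
\[ h_n(y) = a_n + \sum_{i=1}^j b_{i,n}(\langle y,\eta_i\rangle - \delta_i) + \sum_{i=j+1}^p b_{i,n}\langle y,\eta_i\rangle, \]
with $a_n$ and $b_{j+1,n},\ldots,b_{p,n}$ converging and $b_{1,n}\to\infty$, $b_{i,n}/b_{i-1,n}\to 0$ for $2\le i\le j$.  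Because $h$ is a proper log density with $e^h=0$ off $H_j$ (the $C_i^+$ are $\lambda$-null), $e^{\kappa(t)} = \int_{H_j}e^{\langle y,t\rangle + h(y)}\,\lambda(dy)$, and on $H_j$ one has $h(y) = a_\infty + \sum_{i>j}b^*_i\langle y,\eta_i\rangle$, where $a_\infty$ and the $b^*_i$ are the limits from Lemma~\ref{properties-1}.

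For the $F$-piece, I will check pointwise convergence $e^{\langle y,t\rangle + h_n(y)}\to e^{\langle y,t\rangle + h(y)}$: on $H_j$ the middle sum vanishes and $h_n\to h$; on $F\setminus H_j$ some middle-sum term diverges to $-\infty$ at rate $b_{i,n}$ (the remaining middle-sum terms being nonpositive because $\langle y,\eta_i\rangle\le\delta_i$ on $F$), matching $e^h=0$.  To get an $n$-uniform integrable dominator, I will drop the nonpositive middle sum to obtain $h_n(y)\le a_n + \langle y,\tilde\theta_n\rangle$ with $\tilde\theta_n=\sum_{i>j}b_{i,n}\eta_i\to\tstar$; for $n\ge N$ large this will be further bounded by $a_N + \langle y,\tilde\theta_N\rangle + C + \varepsilon\sum_{i>j}|\langle y,\eta_i\rangle|$ on $F$.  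Expanding the absolute values into a finite sum of linear exponentials in $y$ and using that $\theta_N\in\dom c$ with $\dom c$ open, each resulting integral against $e^{\langle y,t\rangle}\,\lambda(dy)$ over $F$ will be finite for small $t,\varepsilon$, so dominated convergence will yield $\int_F\to e^{\kappa(t)}$.

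For the $\cup_{i=1}^{j-1}D_i$-piece, the case $\lambda(\cup_{i=1}^{j-1}D_i)=0$ is trivial; otherwise I will factor
\[ \int_{\cup_{i=1}^{j-1}D_i}e^{\langle y,t\rangle + h_n(y)}\,\lambda(dy) = P_{\theta_n}\!\bigl(\cup_{i=1}^{j-1}D_i\bigr)\cdot \E_{\theta_n}\!\bigl[e^{\langle Y,t\rangle}\,\big|\,\cup_{i=1}^{j-1}D_i\bigr]. \]
Hypothesis~\eqref{main-bound} will bound the conditional density by a constant $M$, so the conditional MGF will be at most $M\int_{\cup_{i=1}^{j-1}D_i}e^{\langle y,t\rangle}\,\lambda(dy)\le Me^{c(t)}$, finite in a neighborhood of $t=0$ by regularity of the original family.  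The factor $P_{\theta_n}(\cup_{i=1}^{j-1}D_i)\to 0$ will follow from Scheffe's theorem applied to the pointwise convergence $f_{\theta_n}\to e^h$ of probability densities (both integrate to one), giving total-variation convergence $P_{\theta_n}\to P^*$ with $P^*(\cup_{i=1}^{j-1}D_i)=0$ since $e^h=0$ there.

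The main obstacle will be producing the integrable dominator over $F$.  Regularity of the LCM directly supplies integrability only on $H_j$, and the argument must extend to $F\setminus H_j$ by leveraging the openness of $\dom c$ around $\theta_N$ together with the polyhedral constraint $\langle y,\eta_i\rangle\le\delta_i$ on $F$ to absorb the $\varepsilon\sum|\langle y,\eta_i\rangle|$ perturbation back into directions where $\theta_N$ still sits inside $\dom c$.  Assumption~\eqref{main-bound} is precisely what prevents the $\cup_{i=1}^{j-1}D_i$-piece from spuriously contributing, as the counterexample in the supplementary material illustrates can happen when it is violated.
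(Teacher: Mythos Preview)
Your plan coincides with the paper's: split $\int e^{\langle y,t\rangle+h_n}\,d\lambda$ into $\int_F$ and $\int_{\cup_i D_i}$, handle $F$ by dominated convergence with a dominator assembled from $\psi_n\to\tstar$, and control the $\cup_i D_i$ piece via \eqref{main-bound}.  The paper's $\varepsilon$-box dominator $e^{M_{t,\varepsilon}(y)}=\max_v e^{\langle y,v\rangle}$ (vertices $v$ of a cube about $\tstar+t$ inside $\dom c$) is exactly your sum-over-sign-patterns device.  Two cosmetic differences: the paper treats $j=0$ and $j=1$ as separate warm-up cases (the latter via continuous convergence of log Laplace transforms), and on $\cup_i D_i$ it bounds $\exp\{c_{\cup D_i}(\theta_n+t)-c_{\cup D_i}(\theta_n)\}$ directly rather than factoring as $P_{\theta_n}(\cup_i D_i)\cdot\E_{\theta_n}[e^{\langle Y,t\rangle}\mid\cup_i D_i]$ and invoking Scheff\'e.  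Your Scheff\'e route is a clean alternative.

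Two slips to fix.  First, integrability of your $F$-dominator requires $\tilde\theta_N+t+\varepsilon\sigma\in\dom c$, not $\theta_N\in\dom c$; these differ by $\sum_{i\le j}b_{i,N}\eta_i$, which is large.  What you need (and what the paper simply asserts) is $\tstar\in\interior(\dom c)$, so that $\tilde\theta_N$ and its small perturbations lie in $\dom c$ for large $N$.  Second, ``$Me^{c(t)}$ finite near $t=0$ by regularity'' is wrong as stated: regularity means $\dom c$ is open, not that $0\in\dom c$.  The paper's corresponding step replaces $e^{\langle y,t\rangle}$ by $e^{M_{t,\varepsilon}(y)-\widetilde M_{0,\varepsilon}(y)}$ and integrates; either way you need $\int_{\cup D_i}e^{\langle y,w\rangle}\,\lambda(dy)<\infty$ for $w$ near $0$, which holds automatically in the intended GLM applications (where $\dom c=\Estar$) but deserves a sentence.
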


{\bf Remarks}:

\begin{enumerate}

\item The quantities in \eqref{DiF} and \eqref{main-bound} are technical in nature and are an artifact of the proof technique. Without these conditions, there exists circumstances in which CGF convergence fails to hold. The next remarks elaborate these quantities. The next Theorem shows that \eqref{main-bound} is satisfied under the more intuitive conditions of \cite{brown}.
\item The sets $(H_i,C_i^-,C_i^+)$, $i = 1,\ldots,j$ arise from the characterization of a generalized affine function $h$ given in Theorem~\ref{vec-char} which is a pointwise limit of the densities of log densities $h_n$. The assumption that the exponential family is discrete and full implies that $\int e^{h(y)}\lambda(dy) = 1$ \citep[Theorem 2.7]{geyer} which in turn implies that $\lambda(C_i^+) = 0$ for all $i = 1$,$\ldots$,$j$. We now focus on sets of points $y$ such that $\lambda(\{y\}) > 0$. The first iteration of the recursive structure in the Theorem~\ref{vec-char} characterization of a generalized affine function gives $E = H_1 \cup C_1^- \cup C_1^+$. Now consider a point $y \in C_1^-$, it is possible in a full regular discrete exponential family for $\inner{y,\eta_k} > \delta_k$, $k > 1$ where the pair $(\eta_k,\delta_k)$ form the hyperplane $H_k$. Such points $y$ form the set $D_1$, and the sets $D_i$, $i > 1$, are similarly motivated. Our proof technique requires bounding of the CGF restricted to $\cup_{i=1}^{j-1}D_i$ evaluated along $\theta_n$ by the quantities in \eqref{main-bound}, see \eqref{add-ass-2} in the proof of Theorem~\ref{main}.

\item The conditions in \eqref{main-bound} rule out pathological examples for which CGF convergence does not hold. In Section~\ref{counterexample} of the Appendix we provide an example for which \eqref{main-bound} does not hold and a lack of CGF convergence follows. Moreover, this example demonstrates a lack of convergence of second moments and our approach for statistical inference fails as a result. More general closures of exponential families in \citet{csiszar,csiszar2008} and \citet[unpublished PhD thesis, Chapter~4]{geyer} do not assume  condition \eqref{main-bound} and therefore rule out CGF convergence in full generality. 

\item Discrete exponential families automatically satisfy \eqref{main-bound} when the generating measure satisfies \\
$
  \inf_{y \in \cup_{i=1}^{j-1}D_i}\lambda(\{y\}) > 0.
$  
In this setting, $e^{\inner{y,\theta} - c_{\cup_{i=1}^{j-1}D_i}(\theta)}$ corresponds to the probability mass function for the random variable conditional on the occurrence of $\cup_{i=1}^{j-1}D_i$.  Thus,
\begin{align*}
  &\sup_{\theta \in \Theta}\sup_{y\in\cup_{i=1}^{j-1}D_i}\left( 
    e^{\inner{y,\theta} - c_{\cup_{i=1}^{j-1}D_i}(\theta)}\right) \\
  &\qquad = \sup_{\theta \in \Theta}\sup_{y\in\cup_{i=1}^{j-1}D_i}\left( 
    \frac{e^{\inner{y,\theta}}\lambda(\{y\})}{\lambda(\{y\})
    \sum_{x\in \cup_{i=1}^{j-1}D_i} e^{\inner{x,\theta}}
    \lambda(\{x\})}\right) \\
  &\qquad \leq \sup_{y\in\cup_{i=1}^{j-1}D_i} \left(1/\lambda(\{y\})\right) 
    < \infty.
\end{align*}
Therefore, Theorem~\ref{main} is applicable for the non-existence of the maximum likelihood estimator that may arise in logistic and multinomial regression or any exponential family with finite support. The same is not necessarily so for Poisson regression. The next Theorem provides CGF convergence for Poisson sampling under the regularity conditions of \cite{brown}.
\end{enumerate}

We show in the next theorem that discrete families with convex polyhedral 
support $K$ also satisfy \eqref{main-bound} under additional regularity 
conditions that hold in practical applications.  When $K$ is convex 
polyhedron, we can write 
$
  K = \{y : \inner{y, \alpha_i} \leq a_i, \; \text{for} \; i = 1,...,m \},
$
as in \cite[Theorem 6.46]{rock-wets}.  When the MLE does 
not exist, the data $x \in K$ is on the boundary of $K$.  Denote the active 
set of indices corresponding to the boundary $K$ containing $x$ by
$ 
  I(x) = \{ i : \inner{x, \alpha_i} = a_i \}.
$
In preparation for Theorem~\ref{convex-poly-thm} we define 
the normal cone $N_K(x)$, 
the tangent cone $T_K(x)$, and 
faces of convex sets 
and then state conditions required on $K$.

\begin{defn}
The normal cone of a convex set $K$ in the finite dimensional vector space 
$E$ at a point $x \in K$ is
$$
  N_K(x) = \set{ \eta \in \Estar : \inner{y - x, \eta} \leq 0 
    \; \text{for all} \; y \in K }.
$$
\end{defn}

\begin{defn}
The tangent cone of a convex set $K$ in the finite dimensional vector space 
$E$ at a point $x \in K$ is
$$
  T_K(x) = \cl \set{ s(y - x) : y \in K \; \opand \; s \geq 0 }
$$
where $\cl$ denotes the set closure operation.
\end{defn}

When $K$ is a convex polyhedron, $N_K(x)$ and $T_K(x)$ are both convex 
polyhedron with formulas given in \cite[Theorem 6.46]{rock-wets}.  These 
formulas are 
\begin{align*}
  T_K(x) &= \{y : \inner{y, \alpha_i} \leq 0 \; \text{for all} \; 
    i \in I(x)\}, \\
  N_K(x) &= \{c_1 \alpha_1 + \cdots + c_m \alpha_m : c_i \geq 0 \; 
    \text{for} \; i \in I(x), \; c_i = 0 \; \text{for} \; i \notin I(x) \}.
\end{align*}

\begin{defn}
A \emph{face} of a convex set $K$ is a convex subset $F$ of $K$ such that 
every (closed) line segment in $K$ with a relative interior point in $F$ has 
both endpoints in $F$.  An \emph{exposed face} of $K$ is a face where a certain 
linear function achieves its maximum over $K$ \citep*[p. 162]{rock-convex}.
\end{defn}

The four conditions of Brown, stated in Section~\ref{sec:Assumptions} are 
required for the Theorem to hold. 
Conditions (i) and (ii) are already assumed in Theorem~\ref{main}.  It is now 
shown that discrete exponential families satisfy \eqref{main-bound} under the 
above conditions.


\begin{thm} \label{convex-poly-thm}
Assume the conditions of Theorem~\ref{main} with the omission of 
\eqref{main-bound} when $j \geq 2$.  Let $K$ denote the convex support of the 
exponential family.  Assume that the exponential 
family satisfies the conditions of Brown:
\begin{itemize}
\item[(i)] The support of the exponential family is a countable set $X$.
\item[(ii)] The exponential family is regular.
\item[(iii)] Every $x \in X$ is contained in the relative interior of an 
  exposed face $F$ of the convex support $K$.
\item[(iv)] The convex support of the measure $\lambda|F$ equals $F$, 
  where $\lambda$ is the generating measure for the exponential family.
\end{itemize}  
Then \eqref{main-bound} holds and we have that $\kappa_n(t)$ converges to 
$\kappa(t)$ pointwise for all $t$ in a neighborhood of zero.
\end{thm}


\subsection{Extensions of CGF convergence}
\label{sec:extensions}

Theorems~\ref{main} and \ref{convex-poly-thm} both verify CGF convergence 
along likelihood maximizing sequences \eqref{max-like-seq} on 
neighborhoods of zero.  
The next theorems show that CGF convergence on neighborhoods of zero is enough 
to imply convergence in distribution and of moments of all orders.
Therefore moments of distributions with log densities that are affine 
functions converge along likelihood maximizing sequences \eqref{max-like-seq} 
to those of a limiting distributions whose log density is a generalized 
affine function.

Suppose that $X$ is a random vector in a finite-dimensional vector space $E$ 
having a moment generating function (MGF) $\varphi_X$, then
$
   \varphi_X(t) = \varphi_{\inner{X, t}}(1), 
$
for $t \in \Estar$, regardless of whether the MGF exist or not.  
It follows that the MGF of $\inner{X, t}$ for all $t$ 
determine the MGF of $X$ and vice versa, when these MGF exist.  
More generally, 
\begin{equation} \label{eq:cramer-wold}
   \varphi_{\inner{X, t}}(s) = \varphi_X(s t),
   \qquad t \in E^{\textstyle *} \opand s \in \R.
\end{equation}
This observation applied to characteristic functions rather than MGF is 
called the Cram\'{e}r-Wold theorem.  In that context it is more trivial 
because characteristic functions always exist.  

If $v_1$, $\ldots,$ $v_d$ is a basis for a vector space $E$,
then \citet[Theorem~2 of Section~15]{halmos} states that 
there exists a unique dual basis $w_1$, $\ldots,$ $w_d$
for $E^{\textstyle *}$ that satisfies
\begin{equation} \label{eq:dual-bases}
   \inner{v_i, w_j} = \begin{cases} 1, & i = j \\ 0, & i \neq j \end{cases}
\end{equation}

\begin{thm} \label{MVmgf-1}
If $X$ is a random vector in $E$ having an MGF, then
the random scalar $\inner{X, t}$ has an MGF for all $t \in \Estar$.
Conversely, if $\inner{X, t}$ has an MGF for all $t \in \Estar$,
then $X$ has an MGF.
\end{thm}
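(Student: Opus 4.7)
The forward direction is essentially a one-line calculation: if $X$ has an MGF, then $\varphi_X$ is finite on some neighborhood $U$ of $0 \in \Estar$, and for any fixed $t \in \Estar$ the identity \eqref{eq:cramer-wold} gives
$\varphi_{\inner{X,t}}(s) = \varphi_X(s t)$,
which is finite whenever $s t \in U$, i.e. for all $s$ in some neighborhood of $0 \in \R$. So $\inner{X, t}$ has an MGF. I would dispose of this direction first.

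The real work is the converse. The hypothesis gives, for each $t \in \Estar$, some $r_t > 0$ with $E[e^{s \inner{X,t}}] < \infty$ for $|s| < r_t$, but the radius $r_t$ could in principle shrink to $0$ as $t$ varies. To produce a single neighborhood of $0 \in \Estar$ on which $\varphi_X$ is finite, the plan is to reduce the multivariate MGF to finitely many scalar MGFs via a dual basis. Pick a basis $v_1, \ldots, v_d$ for $E$ and let $w_1, \ldots, w_d$ be the dual basis of $\Estar$ characterized by \eqref{eq:dual-bases}. Every $t \in \Estar$ decomposes as $t = \sum_{j=1}^d \inner{v_j, t}\, w_j$, and hence for any $x \in E$,
\begin{equation*}
\inner{x, t} = \sum_{j=1}^d \inner{v_j, t}\, \inner{x, w_j}.
\end{equation*}

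The key estimate is convexity of the exponential: writing $a_j = \inner{v_j, t}\, \inner{X, w_j}$, the bound $e^{\sum_j a_j} = e^{\frac{1}{d}\sum_j d a_j} \le \frac{1}{d} \sum_j e^{d a_j}$ yields, after taking expectations,
\begin{equation*}
\varphi_X(t) \;\le\; \frac{1}{d} \sum_{j=1}^d \varphi_{\inner{X, w_j}}\!\bigl(d\, \inner{v_j, t}\bigr).
\end{equation*}
By hypothesis each scalar MGF $\varphi_{\inner{X, w_j}}$ is finite on some interval $(-r_j, r_j)$. Choose $\varepsilon > 0$ small enough that whenever $t$ lies in the neighborhood $U = \{t \in \Estar : |\inner{v_j, t}| < r_j/d \text{ for all } j\}$ (a neighborhood of $0$ in $\Estar$ because the map $t \mapsto (\inner{v_j, t})_{j=1}^d$ is a linear isomorphism onto $\R^d$), every argument $d\inner{v_j, t}$ lies in the interval of finiteness of the corresponding scalar MGF. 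Then $\varphi_X(t) < \infty$ for all $t \in U$, so $X$ has an MGF.

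The main obstacle is the non-uniformity of the $r_t$ across directions in the converse; once the dual-basis reduction is in place, the convexity bound immediately converts finiteness along the $d$ coordinate directions $w_j$ into finiteness on a full neighborhood. Everything else is just checking that the chosen $U$ really is open in $\Estar$, which follows from \eqref{eq:dual-bases} because $(\inner{v_j, \cdot})_{j=1}^d$ are coordinates on $\Estar$.
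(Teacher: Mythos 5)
Your proposal is correct and follows essentially the same route the paper sets up for this theorem: the identity \eqref{eq:cramer-wold} disposes of the forward direction, and the converse is handled by the dual-basis reduction \eqref{eq:dual-bases} together with an elementary pointwise bound (your Jensen/convexity inequality $e^{\sum_j a_j} \le \tfrac{1}{d}\sum_j e^{d a_j}$, equally well done via H\"older) that converts finiteness of the $d$ coordinate scalar MGFs into finiteness of $\varphi_X$ on an explicit open neighborhood of $0$ in $\Estar$. The stray ``choose $\varepsilon>0$'' is vestigial but harmless, since your neighborhood $U$ is already defined directly in terms of the radii $r_j/d$.
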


\begin{thm} \label{MVmgf-2}
Suppose $X_n$, $n = 1$, $2$, $\ldots$ is a sequence of random vectors,
and suppose their moment generating functions converge pointwise
on a neighborhood $W$ of zero.   Then
\begin{equation} \label{eq:weak}
   X_n \weakto X,
\end{equation}
and $X$ has an MGF $\varphi_X$, and
$\varphi_{X_n}(t) \to \varphi_X(t)$, for $t \in E^{\textstyle *}$.
\end{thm}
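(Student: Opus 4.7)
The plan is to reduce Theorem~\ref{MVmgf-2} to the classical one-dimensional MGF convergence theorem by projecting onto lines via \eqref{eq:cramer-wold}, and then to reassemble the multivariate conclusion using Cram\'er-Wold.

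Fix $t \in \Estar$ and set $Y_n^{(t)} = \inner{X_n, t}$. By \eqref{eq:cramer-wold} the MGF of $Y_n^{(t)}$ at $s \in \R$ equals $\varphi_{X_n}(s t)$. Since $W$ is a neighborhood of $0 \in \Estar$, there is $\delta_t > 0$ such that $s t \in W$ for all $|s| < \delta_t$, so the one-dimensional MGFs of $Y_n^{(t)}$ converge pointwise on $(-\delta_t, \delta_t)$. The classical one-dimensional MGF convergence theorem (Curtiss, 1942) then yields $Y_n^{(t)} \weakto Y^{(t)}$ for a limit random variable $Y^{(t)}$ whose MGF is finite on $(-\delta_t, \delta_t)$ and agrees with the pointwise limit there; in particular, moments of all orders of $Y_n^{(t)}$ converge to those of $Y^{(t)}$.

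To promote this to weak convergence of the $X_n$'s, apply the previous step to each vector of a basis for $\Estar$: coordinatewise convergence in distribution gives joint tightness of $\{X_n\}$. Along any weakly convergent subsequence $X_{n_k} \weakto X$, the continuous mapping theorem yields $\inner{X, t} \stackrel{d}{=} Y^{(t)}$ for every $t \in \Estar$, and by Cram\'er-Wold the distribution of $X$ is uniquely determined by these one-dimensional projections. Hence every subsequential limit has the same law, and $X_n \weakto X$ along the full sequence. Theorem~\ref{MVmgf-1} combined with the finiteness of $\varphi_{Y^{(t)}}$ on a neighborhood of zero for every $t$ then produces an MGF $\varphi_X$ for $X$ that is finite on a neighborhood of $0 \in \Estar$.

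For $t$ in a small enough neighborhood of zero contained in $W$, openness of $W$ gives $s t \in W$ throughout an open interval containing $[0, 1]$, so Curtiss's theorem supplies the identity $\varphi_{X_n}(t) = \varphi_{Y_n^{(t)}}(1) \to \varphi_{Y^{(t)}}(1) = \varphi_X(t)$. At points $t$ outside this neighborhood, convergence is interpreted in the extended real sense and follows from Fatou's lemma applied to $e^{\inner{X_n, t}}$. The main obstacle I anticipate is precisely this last step: weak convergence of $X_n$ does not by itself yield MGF convergence, so one must leverage the one-dimensional MGF bounds on the entire segment $\{s t : s \in [0, 1+\varepsilon)\}$ to obtain the uniform integrability of $e^{\inner{X_n, t}}$ that is needed to pass the weak limit through the expectation at $s = 1$.
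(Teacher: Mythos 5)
Your overall architecture --- project onto lines via \eqref{eq:cramer-wold}, apply the classical one-dimensional MGF convergence theorem on a small interval, get tightness from the projections onto a (dual) basis, identify every subsequential limit by the Cram\'er--Wold device, and then invoke Theorem~\ref{MVmgf-1} to produce $\varphi_X$ --- is the natural reduction and is exactly the machinery the paper sets up in \eqref{eq:cramer-wold} and \eqref{eq:dual-bases}, so the route is essentially the intended one. The conclusions $X_n \weakto X$, existence of an MGF for $X$, and $\varphi_{X_n}(t) \to \varphi_X(t)$ for $t$ in a ball about the origin are correctly obtained: for such $t$ you can arrange $s t \in W$ for $s$ in an open interval containing $[0,1]$, so the one-dimensional convergence hypothesis holds past $s = 1$, the bound at a slightly larger multiple of $t$ gives uniform integrability of $e^{\inner{X_n,t}}$, and the limit at $s=1$ is $\varphi_X(t)$ because $\inner{X,t}$ has the law of $Y^{(t)}$.

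The gap is the final extension to all $t \in \Estar$, and you have flagged it yourself without closing it. Fatou (or the portmanteau inequality for nonnegative continuous integrands) yields only $\liminf_n \varphi_{X_n}(t) \ge \varphi_X(t)$, and the uniform integrability you say is needed cannot be extracted from the hypotheses, because they say nothing about the MGFs outside $W$. Indeed no argument can close this step at the stated level of generality: take scalar $X_n$ with $P(X_n = n) = p_n$ and $P(X_n = 0) = 1 - p_n$, where $p_n = e^{-3n/2}$ for even $n$ and $p_n = e^{-3n}$ for odd $n$. Then $\varphi_{X_n} \to 1$ pointwise on $(-1.4, 1.4)$ and $X_n \weakto 0$, yet $\varphi_{X_n}(2) \to +\infty$ along even $n$ and $\varphi_{X_n}(2) \to 1 = \varphi_X(2)$ along odd $n$, so convergence on all of $\Estar$ fails. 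Thus the global claim needs either additional control of the MGFs off $W$ (for instance the exponential-family structure present where this theorem is applied, where only behavior near zero is used in Theorem~\ref{MVmgf-3}) or a weaker reading of the conclusion; your Fatou sentence, as written, does not prove the displayed statement, and you should either supply such extra control explicitly or stop at convergence on a neighborhood of zero.
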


\begin{thm} \label{MVmgf-3}
Under the assumptions of Theorem~\ref{MVmgf-2}, suppose
$t_1$, $t_2$, $\ldots,$ $t_k$ are vectors defined on $\Estar$, 
the dual space of $E$.  Then
$
   \prod_{i = 1}^k \inner{X_n, t_i}
$
is uniformly integrable so
$$
   \E\left\{ \prod_{i = 1}^k \inner{X_n, t_i} \right\}
   \to
   \E\left\{ \prod_{i = 1}^k \inner{X, t_i} \right\}.
$$
\end{thm}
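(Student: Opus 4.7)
Plan for the proof of Theorem~\ref{MVmgf-3}.

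First, I would combine Theorem~\ref{MVmgf-2} with the continuous mapping theorem. Theorem~\ref{MVmgf-2} gives both $X_n \weakto X$ and pointwise convergence $\varphi_{X_n}(t) \to \varphi_X(t)$ for every $t \in \Estar$. The map $x \mapsto \prod_{i = 1}^k \inner{x, t_i}$ is continuous on $E$, so by continuous mapping
\begin{equation*}
   Z_n := \prod_{i = 1}^k \inner{X_n, t_i} \;\weakto\; Z := \prod_{i = 1}^k \inner{X, t_i}.
\end{equation*}
The strategy is to upgrade this weak convergence to convergence in $L^1$ by proving the sequence $\{Z_n\}$ uniformly integrable, for which it suffices to exhibit some $\epsilon > 0$ with $\sup_n \E\abs{Z_n}^{1 + \epsilon} < \infty$.

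Next, I would use identity \eqref{eq:cramer-wold} to transfer convergence of the vector MGFs to uniform bounds on the scalar MGFs of each coordinate $\inner{X_n, t_i}$. Since $W$ is a neighborhood of zero in $\Estar$, choose $s_0 > 0$ small enough that $\pm s_0 t_i \in W$ for every $i$. Pointwise convergence of $\varphi_{X_n}$ on $W$ forces the sequences $\varphi_{X_n}(\pm s_0 t_i)$ to be bounded, and $e^{s_0 \abs{y}} \le e^{s_0 y} + e^{- s_0 y}$ together with \eqref{eq:cramer-wold} yields
\begin{equation*}
   \sup_n \E e^{s_0 \abs{\inner{X_n, t_i}}}
   \le \sup_n \bigl(\varphi_{X_n}(s_0 t_i) + \varphi_{X_n}(- s_0 t_i)\bigr) < \infty,
   \qquad i = 1, \ldots, k.
\end{equation*}
Because $\abs{y}^p \le C_{p, s_0}\, e^{s_0 \abs{y}}$ for every $p > 0$, this yields $\sup_n \E\abs{\inner{X_n, t_i}}^p < \infty$ for all $p > 0$ and all $i$.

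Finally, I would apply H\"older's inequality with $k$ equal exponents to control $\abs{Z_n}^{1+\epsilon}$:
\begin{equation*}
   \E\abs{Z_n}^{1 + \epsilon}
   = \E\prod_{i = 1}^k \abs{\inner{X_n, t_i}}^{1 + \epsilon}
   \le \prod_{i = 1}^k \bigl(\E\abs{\inner{X_n, t_i}}^{k(1 + \epsilon)}\bigr)^{1/k},
\end{equation*}
which is bounded uniformly in $n$ by the previous step. Hence $\{Z_n\}$ is uniformly integrable, and combined with $Z_n \weakto Z$ this gives $\E(Z_n) \to \E(Z)$.

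The main obstacle is the middle step, where pointwise MGF convergence on $W$ must be turned into a uniform exponential-moment bound on each scalar $\inner{X_n, t_i}$. The continuous mapping and H\"older steps are routine; the pivotal observation enabling the hard step is that $W$ is open at the origin of $\Estar$, so for each $t_i$ both $+ s_0 t_i$ and $- s_0 t_i$ sit inside $W$ simultaneously, sandwiching $\E e^{s_0 \abs{\inner{X_n, t_i}}}$ between two convergent (hence bounded) sequences of vector-MGF values.
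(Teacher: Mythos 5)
Your proof is correct and follows the same route the paper's statement itself signals: bound $\E e^{s_0\abs{\inner{X_n,t_i}}}$ uniformly using MGF convergence at $\pm s_0 t_i \in W$ via \eqref{eq:cramer-wold}, deduce uniformly bounded moments and, with H\"older, a uniform $L^{1+\epsilon}$ bound on the product, hence uniform integrability, and then combine with $X_n \weakto X$ (continuous mapping) to get convergence of expectations. This is essentially the paper's argument, so no further comparison is needed.
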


The combination of Theorems~\ref{main}-\ref{MVmgf-3} provide a 
methodology for statistical inference along likelihood maximizing sequences 
when the MLE is in the completion of the exponential family.  In particular, 
we have convergence in distribution and convergence of moments of all orders 
along likelihood maximizing sequence.  The limiting distribution in this 
context is a generalized exponential family with density $e^h$ where $h$ is a 
generalized affine function.

\subsection{Convergence of null spaces of Fisher information}
\label{sec:fisher}

Our implementation for finding the MLE in the completion 
relies on finding the null space of Fisher information matrix. 
We first define an appropriate notion of convergence of vector subspaces, 
and then prove that the null spaces corresponding to a sequence of 
semidefinite matrices converge.

\begin{defn}
Painlev\'{e}-Kuratowski set convergence \citep*[Section 4.A]{rock-wets} can be 
defined as follows (\citet{rock-wets} give many equivalent 
characterizations).  If $C_n$ is a sequence of sets in $\R^p$ and $C$ is 
another set in $\R^p$, then we say $C_n \to C$ if
\begin{itemize}
  \item[\normalfont (i)] For every $x \in C$ there exists a subsequence $n_k$ of the 
  natural numbers and there exist $x_{n_k} \in C_{n_k}$ such that 
  $x_{n_k} \to x$.
  \item[\normalfont (ii)] For every sequence $x_n \to x$ in $\R^p$ such that there exists 
  a natural number $N$ such that $x_n \in C_n$ whenever $n \geq N$, we have 
  $x \in C$.
\end{itemize}
\end{defn}

\begin{thm} \label{implem}
Suppose that $A_n \in \R^{p \times p}$ is a sequence of positive semidefinite 
matrices and $A_n \to A$ componentwise.  Fix $\varepsilon > 0$ less than half 
of the least nonzero eigenvalue of $A$ unless $A$ is the zero matrix in which 
case $\varepsilon > 0$ may be chosen arbitrarily.  Let $V_n$ denote the 
subspace spanned by the eigenvectors of $A_n$ corresponding to eigenvalues 
that are less than $\varepsilon$.  
Let $V$ denote the null space of $A$. Then $V_n \to V$
(Painlev\'{e}-Kuratowski).
\end{thm}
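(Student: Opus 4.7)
The plan is to leverage the classical continuity of eigenvalues of symmetric matrices and then verify the two Painlev\'e-Kuratowski conditions directly. Componentwise convergence $A_n \to A$ in $\R^{p \times p}$ is operator-norm convergence in this finite-dimensional setting, and both $A_n$ and $A$ are symmetric. Weyl's inequality then gives that the eigenvalues $\lambda_1^{(n)} \leq \cdots \leq \lambda_p^{(n)}$ of $A_n$ converge, coordinate by coordinate, to the eigenvalues $\lambda_1 \leq \cdots \leq \lambda_p$ of $A$. Let $q = \dim V$, so $\lambda_1 = \cdots = \lambda_q = 0$ and, if $q < p$, also $\lambda_{q+1} \geq 2\varepsilon$. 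For all sufficiently large $n$, the first $q$ eigenvalues of $A_n$ lie below $\varepsilon$ while the remainder exceed $\varepsilon$, giving $\dim V_n = q$. (When $A = 0$ every eigenvalue of $A_n$ converges to $0$ and $V_n = V = \R^p$ eventually; when $q = 0$ both spaces are $\set{0}$ eventually, and the result is trivial.)

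For condition~(ii) of Painlev\'e-Kuratowski, suppose $x_n \to x$ with $x_n \in V_n$ for every $n \geq N$. Since $A_n$ restricted to $V_n$ has spectral norm at most $\lambda_q^{(n)}$, one has $\|A_n x_n\| \leq \lambda_q^{(n)} \|x_n\| \to 0$. But also $A_n x_n \to A x$ because $A_n \to A$ in operator norm and $x_n \to x$ in $\R^p$. Hence $A x = 0$, i.e., $x \in V$.

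For condition~(i), fix $x \in V$. For each $n$ large enough, choose an orthonormal basis $v_1^{(n)}, \ldots, v_q^{(n)}$ of $V_n$ consisting of eigenvectors of $A_n$ for its $q$ smallest eigenvalues. By compactness of the product of unit spheres, extract a subsequence $n_k$ such that $v_i^{(n_k)} \to v_i$ for each $i$; the limits $v_1, \ldots, v_q$ are automatically orthonormal. Passing to the limit in $A_{n_k} v_i^{(n_k)} = \lambda_i^{(n_k)} v_i^{(n_k)}$ yields $A v_i = 0$, so $v_1, \ldots, v_q$ form an orthonormal basis of $V$. Writing $x = \sum_{i=1}^q \inner{x, v_i} v_i$ and setting $x_{n_k} = \sum_{i=1}^q \inner{x, v_i} v_i^{(n_k)} \in V_{n_k}$ produces the required subsequence $x_{n_k} \to x$. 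The only mildly delicate point is the coordinate-wise tracking of eigenvectors when zero has high multiplicity in $A$, but this is handled cleanly by working with arbitrary orthonormal bases of the relevant eigenspaces and invoking compactness, without ever matching specific eigenvectors of $A_n$ to specific eigenvectors of $A$.
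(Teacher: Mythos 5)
Your proof is correct, but it is organized quite differently from the paper's. The paper also starts from eigenvalue continuity (Weyl), but then introduces the auxiliary spaces $V_n(\gamma)$ for $0<\gamma\le\varepsilon$ and argues through the quadratic form: for part (i) it fixes a unit $x\in V$, uses $x^T A_n x<\gamma^2$ to bound the coefficients of $x$ on eigenvectors of $A_n$ with eigenvalues $\ge\gamma$, takes $x_n$ to be the eigen-projection of $x$ onto $V_n(\gamma)$ with the explicit estimate $\norm{x-x_n}\le p\sqrt{\gamma}$, and then lets $\gamma\downarrow 0$; for part (ii) it bounds $x_n^T A_n x_n\le\gamma$ and passes to the limit of the quadratic form to get $x^TAx\le 2\gamma$ for all $\gamma$, hence $x\in V$ by positive semidefiniteness. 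You instead prove (ii) by an operator-norm argument on the invariant subspace ($\norm{A_nx_n}\le\lambda_q^{(n)}\norm{x_n}\to 0$ while $A_nx_n\to Ax$), and (i) by a compactness argument: extract a subsequence along which orthonormal eigenbases of $V_n$ converge, observe the limits lie in $V$ and, since $\dim V_n=q=\dim V$ eventually, form an orthonormal basis of $V$, then transport the expansion of $x$ back to $V_{n_k}$. Both routes are valid; yours is shorter and avoids both the $\gamma$-parametrized family and the implicit diagonal selection the paper needs in its ``holds for all $\gamma$'' step, while the paper's computation yields an explicit quantitative bound ($\norm{x-x_n}\le p\sqrt{\gamma}$ whenever $x^TA_nx<\gamma^2$), which is in the spirit of its numerical use of an eigenvalue tolerance. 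One small remark: your part (i) produces only a subsequence, which is exactly what the paper's stated definition of Painlev\'e--Kuratowski condition (i) requires; if you wanted the stronger full-sequence (inner-limit) statement, your compactness argument upgrades easily, e.g.\ by showing $d(x,V_n)\to 0$ via a subsequence-of-subsequence contradiction or by using the orthogonal projections of $x$ onto $V_n$.
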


In our context, the sequence of matrices $A_n$ in Theorem~\ref{implem} 
correspond to the Fisher information matrices obtained from a discrete 
exponential family whose canonical parameters are substituted for 
those in a likelihood maximizing sequence.

\section*{Supplementary Materials}

The R package \texttt{glmdr} accompanies this submission \citep{glmdr}.

\section{Discussion}

TThe theory of generalized affine functions and the geometry of exponential families allow GLM software to provide fast and scalable maximum likelihood estimation when the observed value of the canonical statistic is on the boundary of its support.  The limiting probability distribution evaluated along the iterates of a likelihood maximizing sequence has log density that is a generalized affine function with structure given by Theorem~\ref{vec-char}.  Cumulant generating functions converge along this sequence of iterates (Theorems~\ref{main} and \ref{convex-poly-thm}), as do estimates of moments of all orders (Theorem~\ref{MVmgf-3}), and so do the null spaces of Fisher information matrices (Theorem \ref{implem}).  These results allow one to obtain the MLE in the completion of the exponential family and to construct one-sided confidence intervals for mean value parameters that are on the boundary of their support.  

The \texttt{glmdr} package computes one-sided confidence intervals for mean value parameters that are on the boundary of their support. Parameter estimation in the LCM is conducted in the traditional manner. The costs of computing the support of a LCM using the \texttt{glmdr} package are minimal compared to the repeated linear programming in the \texttt{rcdd} package.  It is much faster to let optimization software, such as \texttt{glm} in R, simply go uphill on the log likelihood of the exponential family until a convergence tolerance is reached, determine null eigenvectors of the limiting Fisher information matrix, and then compute one-sided confidence intervals than it is to compute the necessary repeated linear programming to achieve the same inferences.  Our examples demonstrate that massive time savings are possible using our methodology.

The chance of observing a canonical statistic on the boundary of its support increases when the dimension of the model increases.  Researchers naturally want to include all possibly relevant covariates in an analysis, and this will often result in the MLE not existing in the conventional sense. Our methods provide a computationally inexpensive solution to this problem.

\section*{Acknowledgements}

We would like to thank Forrest W. Crawford, his comments led to an improved and more interesting version of this paper.

\appendix
\section*{Appendix}

\section{Proofs of main results}

\begin{proof}[Proof of Theorem~\ref{main}]
First consider the case when $j = 0$, the sequences of $\eta$ vectors and 
scalars $\delta$ are both of length zero.  There are no sets $C^{+}$ and 
$C^{-}$ in this setting and $h$ is affine on $E$.  
From Lemma~\ref{properties-1} we have $\psi_n = \theta_n$.  From 
Corollary~\ref{properties-2}, $\theta_n \to \tstar$ as $n\to\infty$.  We 
observe that $c(\theta_n) \to c(\tstar)$ from continuity of the cumulant 
function.  The existence of the MLE in this setting implies that there is a 
neighborhood about 0 denoted by $W$ such that 
$\tstar + W \subset \interior(\dom c)$.  Pick $t \in W$ and observe that 
$c(\theta_n + t) \to c(\tstar + t)$.  Therefore $\kappa_n(t) \to \kappa(t)$ 
when $j = 0$.

Now consider the case when $j=1$.  Define 
$c_1(\theta) = \log\int_{H_1}e^{\inner{y,\theta}}\lambda(dy)$ for all 
$\theta \in \interior(\dom\, c_1)$.  In this scenario we have
\begin{align*}
  \kappa_n(t) &= c\left(\psi_n + t + b_{1,n}\eta_1 \right) 
    - c\left(\psi_n + b_{1,n}\eta_1 \right) \\
  &=  c\left(\psi_n + t + b_{1,n}\eta_j \right) 
    - c\left(\psi_n + b_{1,n}\eta_1 \right) \pm b_{1,n}\delta_1 \\
  &=  \left[c\left(\psi_n + t + b_{1,n}\eta_1 \right) 
    - b_{1,n}\delta_1\right] 
    - \left[c\left(\psi_n + b_{1,n}\eta_1 \right) 
    - b_{1,n}\delta_1 \right].
\end{align*}
From \cite[Theorem 2.2]{geyer}, we know that
\begin{equation} \label{eq:key-too}
    c\left(\tstar + t + s\eta_1 \right) - s\delta_1 
      \to c_1\left(\tstar + t\right), \quad 
    c\left(\tstar + s\eta_1 \right) - s\delta_1 
      \to c_1\left(\tstar\right), 
\end{equation} 
as $s \to \infty$ since $\delta_1 \geq \inner{y, \eta_1}$ for all $y \in H_1$.  
The left hand side of both convergence arrows in \eqref{eq:key-too} are convex 
functions of $\theta$ and the right hand side is a proper convex function.
If $\interior(\dom\, c_1)$ is nonempty, which holds whenever
$\interior(\dom\, c)$ is nonempty, then the convergence in 
\eqref{eq:key-too} is uniform on compact subsets of $\interior(\dom\, c_1)$ 
\citep*[Theorem~7.17]{rock-wets}.  Also \citep*[Theorem~7.14]{rock-wets}, 
uniform convergence on compact sets is the same as continuous convergence.  
Using continuous convergence, we have that both 
\begin{align*}
  c\left(\psi_n + t + b_{1,n}\eta_1 \right) 
    - b_{1,n}\delta_1 &\to c_1\left(\tstar + t\right), \\
  c\left(\psi_n + b_{1,n}\eta_1 \right) 
    - b_{1,n}\delta_1 &\to c_1\left(\tstar\right), 
\end{align*}
where $b_{1,n} \to \infty$ as $n \to \infty$ by Lemma~\ref{properties-1}.  Thus 
\begin{align*}
  \kappa_n(t) &= c(\theta_n + t) - c(\theta_n) 
    \to c_1\left(\tstar + t\right) - c_1\left(\tstar\right) \\
  &= \log\int_{H_1} e^{\inner{y + t,\tstar} - c(\tstar)} \lambda(dy) 
    = \log\int_{H_1} e^{\inner{y,t} + h(y)} \lambda(dy) \\
  &= \log\int e^{\inner{y,t} + h(y)} \lambda(dy) = \kappa(t).
\end{align*}
This concludes the proof when $j=1$.  

For the rest of the proof we will assume 
that $1 < j \leq p$ where dim($E$) = $p$.  Represent the sequence $\theta_n$ in 
coordinate form as
$
   \theta_n = \sum_{i=1}^p b_{i,n}\eta_i,  
$
with scalars 
$b_{i,n}$, $i = 1,...,p$.  For $0 < j < p$, we know that $\psi_n \to \tstar$ as 
$n \to \infty$ from Corollary~\ref{properties-2}.  The existence of the MLE in 
this setting implies that there is a neighborhood about 0, denoted by $W$, 
such that $\tstar + W \subset \interior(\dom c)$.  Pick $t \in W$, fix 
$\varepsilon > 0$, and construct $\varepsilon$-boxes about $\tstar$ and 
$\tstar + t$, denoted by $\mathcal{N}_{0,\varepsilon}(\tstar)$ and 
$\mathcal{N}_{t,\varepsilon}(\tstar)$ respectively, such that both 
$\mathcal{N}_{0,\varepsilon}(\tstar), \mathcal{N}_{t,\varepsilon}(\tstar) 
\subset \interior\left(\dom\, c\right)$.  Let $V_{t,\varepsilon}$ be the set of 
vertices of $\mathcal{N}_{t,\varepsilon}(\tstar)$.  For all $y \in E$ define 
\begin{equation} \label{bounds}
  M_{t,\varepsilon}(y) = 
    \max_{v \in V_{t,\varepsilon}}\{\langle v, y \rangle\},
  \qquad
  \widetilde{M}_{t,\varepsilon}(y) = 
    \min_{v \in V_{t,\varepsilon}}\{\langle v, y \rangle\}.
\end{equation}
From the conclusions of Lemma~\ref{properties-1} and 
Corollary~\ref{properties-2}, we can pick an integer $N$ such that 
$
  \inner{y,\psi_n + t} \leq M_{t,\varepsilon}(y)
$
and $b_{(i+1),n}/b_{i,n} < 1$ for all $n > N$ and $i = 1,...,j-1$.  For all 
$y \in F$, we have 
\begin{equation} \label{add-ass-1}
    \inner{y,\theta_n + t} - \sum_{i=1}^j b_{i,n}\delta_i 
      =\inner{y,\psi_n + t} + \sum_{i=1}^j 
        b_{i,n}\left(\inner{y,\eta_i} - \delta_i\right) 
    \leq M_{t,\varepsilon}(y)
\end{equation} 
for all $n > N$.  The integrability of $e^{M_{t,\varepsilon}(y)}$ and 
$e^{\widetilde{M}_{t,\varepsilon}(y)}$ follows from
\begin{align*}
&\int e^{\widetilde{M}_{t,\varepsilon}(y)}\lambda(dy) 
  \leq \int e^{M_{t,\varepsilon}(y)}\lambda(dy) 
    =\sum_{v \in V_{t,\varepsilon}} \int\limits_{\{y:\; \langle y,v \rangle 
      = M_{t,\varepsilon}(y)\}} e^{\langle y,v \rangle} \lambda(dy) \\
  &\qquad\leq 
    \sum_{v \in V_{t,\varepsilon}} \int e^{\langle y,v \rangle} \lambda(dy) 
  < \infty.
\end{align*}
Therefore,
$$
  \inner{y,\psi_n + t} 
    + \sum_{i=1}^j b_{i,n}\left(\inner{y,\eta_i} - \delta_i\right)
  \to \left\{\begin{array}{cc}
    \inner{y,\tstar + t}, & y \in H_j, \\
    -\infty,              & y \in F \setminus H_j.
  \end{array}\right.
$$
which implies that
\begin{equation} \label{cF}
  c_F(\theta_n + t) - c_F(\theta_n) \to c_{H_j}(\tstar + t) - c_{H_j}(\tstar), 
\end{equation}
by dominated convergence.  To complete the proof, we need to verify that
\begin{equation} \label{last-condition}
  \begin{split}
    c(\theta_n + t) - c(\theta_n) &= c_F(\theta_n + t) - c_F(\theta_n) + c_{\cup_{i=1}^{j-1}D_i}(\theta_n + t) 
      - c_{\cup_{i=1}^{j-1}D_i}(\theta_n) \\
    &\to c_{H_j}(\tstar + t) - c_{H_j}(\tstar).
  \end{split}
\end{equation}
We know that \eqref{last-condition} holds when 
$
  \lambda(\cup_{i=1}^{j-1}D_i) = 0
$ 
in \eqref{main-bound} because of \eqref{cF}.  Now suppose that 
$\lambda(\cup_{i=1}^{j-1}D_i) > 0$.  We have,
\begin{equation} \label{add-lim}
  \inner{y,\psi_n + t} 
    + \sum_{i=1}^j b_{i,n}\left(\inner{y,\eta_i} - \delta_i\right)
  \to -\infty, \qquad y \in \cup_{i=1}^{j-1}D_i,
\end{equation}
and
\begin{equation} \label{add-ass-2}
  \begin{split}
    &\exp\left(c_{\cup_{i=1}^{j-1}D_i}(\theta_n + t) 
        - c_{\cup_{i=1}^{j-1}D_i}(\theta_n)\right) 
      = \int_{\cup_{i=1}^{j-1}D_i} e^{\inner{y,\theta_n + t} 
        - c_{\cup_{i=1}^{j-1}D_i}(\theta_n)} \lambda(dy) \\
      &\qquad \leq \int_{\cup_{i=1}^{j-1}D_i} e^{M_{t,\varepsilon}(y) 
        - \widetilde{M}_{0,\varepsilon}(y) + \inner{y, \theta_n}
        - c_{\cup_{i=1}^{j-1}D_i}(\theta_n)} \lambda(dy) \\
      &\qquad \leq \sup_{y\in \cup_{i=1}^{j-1}D_i} \left(e^{\inner{y, \theta_n}
        - c_{\cup_{i=1}^{j-1}D_i}(\theta_n)}\right) 
        \lambda\left(\cup_{i=1}^{j-1}D_i\right) 
        \int_{\cup_{i=1}^{j-1}D_i} e^{M_{t,\varepsilon}(y) 
          - \widetilde{M}_{0,\varepsilon}(y)}\lambda(dy) \\
      &\qquad \leq \sup_{\theta \in \Theta}\sup_{y\in \cup_{i=1}^{j-1}D_i} 
        \left(e^{\inner{y, \theta}- c_{\cup_{i=1}^{j-1}D_i}(\theta)}\right) 
        \lambda\left(\cup_{i=1}^{j-1}D_i\right) \\
      &\qquad\qquad \times  \int_{\cup_{i=1}^{j-1}D_i} e^{M_{t,\varepsilon}(y) 
        - \widetilde{M}_{0,\varepsilon}(y)}\lambda(dy) \; < \; \infty
  \end{split}
\end{equation}
for all $n > N$ by the assumption given by \eqref{main-bound}.  
The assumption that the exponential family is discrete and full implies that 
$\int e^h(y)\lambda(dy) = 1$ \citep[Theorem~2.7]{geyer}.  This in turn implies 
that $\lambda(C_i^+) = 0$ for all $i = 1,...,j$ which then implies that 
$
  c(\theta) = c_F(\theta) + c_{\cup_{i=1}^{j-1}D_i}(\theta).
$
Putting \eqref{add-ass-1}, \eqref{add-lim}, and \eqref{add-ass-2} together we 
can conclude that \eqref{last-condition} holds as $n\to\infty$ by dominated 
convergence and 
\begin{equation} \label{end}
  \begin{split}
    c_{H_j}(\tstar + t) - c_{H_j}(\tstar) 
    &= \log\int_{H_j} e^{\inner{y,\tstar + t}} \lambda(dy)
      - \log\int_{H_j} e^{\inner{y,\tstar}} \lambda(dy) \\
   &= \log\int e^{\inner{y,t} + h(y)}\lambda(dy),
  \end{split}
\end{equation}
for all $t \in W$, where the last equality is $\kappa(t)$.  This verifies CGF convergence on neighborhoods of 0. 
\end{proof}

\begin{proof}[Proof of Theorem~\ref{convex-poly-thm}]
Represent $h$ as in Theorem~\ref{vec-char}.  Denote the normal cone of the 
convex polyhedron support $K$ at the data $x$ by $N_K(x)$.  We show that a 
sequence of scalars $\deltastar_i$ and a linearly independent set of vectors 
$\etastar_i \in \Estar$ can be chosen so that $\etastar_i \in N_K(x)$, and 
\begin{equation} \label{newH}
  \begin{split}
    H_i &= \{y\in H_{i-1}: \inner{y,\etastar_i} = \deltastar_i\}, \\
    C_i^+ &= \{y\in H_{i-1}: \inner{y,\etastar_i} > \deltastar_i\}, \\
    C_i^- &= \{y\in H_{i-1}: \inner{y,\etastar_i} < \deltastar_i\},
  \end{split}
\end{equation}
for $i = 1,...,j$ where $H_0 = E$ so that \eqref{main-bound} holds.  We will 
prove this by induction with the hypothesis $\Hyp(m)$, $m = 1,...,j$, that 
\eqref{newH} holds for $i \leq m$ where the vectors $\etastar_i \in N_K(x)$ 
$i = 1,...,m$.

We first verify the basis of the induction.  The assumption that the 
exponential family is discrete and full implies that 
$\int e^h(y)\lambda(dy) = 1$ \citep[Theorem~2.7]{geyer}.  This in turn implies 
that $\lambda(C_k^+) = 0$ for all $k = 1,...,j$.  This then implies that 
$
  K \subseteq \{y\in E : \inner{y,\eta_1} \leq \delta_1\} = H_1 \cup C_1^-.
$
Thus $\eta_1 \in N_K(x)$ and the base of the induction holds with 
$\eta_1 = \etastar_1$ and $\delta_1 = \deltastar_1$.  

We now show that $\Hyp(m+1)$ follows from $\Hyp(m)$ for $m = 1,...,j-1$.  
We first establish that $K\cap H_m$ is an exposed face of $K$.  This is needed 
so that \eqref{newH} holds for $i = 1,...,m+1$.  Let $L_K$ be the 
collection of closed line segments with endpoints in $K$.  Arbitrarily choose 
$l \in L_K$ such that an interior point $y \in l$ and 
$y \in K\cap H_m$.  We can write 
$y = \gamma a + (1-\gamma)b$, $0 < \gamma < 1$, where $a$ and $b$ are the 
endpoints of $l$.  Since $a,b \in K$ by construction, we have that 
$\inner{a-x,\etastar_m} \leq 0$ and $\inner{b-x,\etastar_m} \leq 0$ because 
$\etastar_m \in N_K(x)$ by $\Hyp(m)$.  Now,
\begin{align*}
  0 &\geq \inner{a-x,\etastar_m} = \inner{a-y+y-x,\etastar_m} 
    = \inner{a-y,\etastar_m} \\
  &= \inner{a-(\gamma a + (1-\gamma)b), \etastar_m} 
    = (1-\gamma)\inner{a -b, \etastar_m}
\end{align*}
and
\begin{align*}
  0 &\geq \inner{b-x,\etastar_m} = \inner{b-y+y-x,\etastar_m} 
    = \inner{b-y,\etastar_m} \\
  &= \inner{b-(\gamma a + (1-\gamma)b), \etastar_m}
    = -\gamma\inner{a-b, \etastar_m}.  
\end{align*}
Therefore $a,b \in K\cap H_m$ and this verifies that $K\cap H_m$ is a face of 
$K$ since $l$ was chosen arbitrarily.  The function 
$
  y \mapsto \inner{y - x,\etastar_m} - \deltastar_m,
$ 
defined on $K$, is maximized over $K\cap H_m$.  Therefore $K\cap H_m$ is an 
exposed face of $K$ by definition.  The exposed face 
$
  K\cap H_m = K\cap(H_{m+1}\cup C_{m+1}^-)
$ 
since $\lambda(C_{m+1}^+) = 0$ and the convex support of the measure 
$\lambda|H_m$ is $H_m$ by assumption.  Thus, $\eta_{m+1} \in N_{K\cap H_m}(x)$.  

The sets $K$ and $H_m$ are both convex and are therefore regular at every 
point \citep*[Theorem 6.20]{rock-wets}.  We can write 
$
  N_{K\cap H_m}(x) = N_K(x) + N_{H_m}(x)
$ 
since $K$ and $H_m$ are convex sets that cannot be separated where + denotes 
Minkowski addition in this case \citep*[Theorem 6.42]{rock-wets}.  The normal 
cone $N_{H_m}(x)$ has the form
\begin{align*}
  N_{H_m}(x) &= \{\eta \in \Estar : \inner{y-x,\eta} \leq 0 \; 
    \text{for all} \; y \in H_m \} \\
    &= \{\eta \in \Estar : \inner{y-x,\eta} \leq 0 \; \text{for all} \;
      y \in E \; \\ 
    &\qquad  \text{such that} \; \inner{y-x,\eta_i} = 0, \; 
      i = 1,...,m\} \\
    &= \left\{\sum_{i=1}^m a_i\eta_i : \; a_i \in \R, \; i = 1,...,m \right\}.  
\end{align*}
Therefore, we can write 
\begin{equation} \label{etareform}
  \eta_{m+1} = \etastar_{m+1} + \sum_{i=1}^m a_{m,i}\etastar_i
\end{equation}
where $\etastar_{m+1} \in N_K(x)$ and $a_{m,i} \in \R$, $i = 1,...,m$.  For 
$y \in H_{m+1}$, we have that
\begin{align*}
  \inner{y,\etastar_{m+1}} &= \inner{y,\eta_{m+1}} - 
    \sum_{i=1}^ma_{m,i}\inner{y,\eta_{i}} 
  =\delta_{m+1} -  \sum_{i=1}^m a_{m,i}\delta_i.  
\end{align*}
Let $\deltastar_{m+1} = \delta_{m+1} - \sum_{i=1}^m a_{m,i}\delta_i$.  
We can therefore write 
$$
  H_{m+1} = \left\{y \in H_m :  
    \inner{y,\etastar_{m+1}} = \deltastar_{m+1}\right\}
$$
and
\begin{equation} \label{showCplus}
  \begin{split}
    C_{m+1}^+ &= \left\{y \in H_m : \inner{y,\eta_{m+1}} 
      > \delta_{m+1}\right\} \\
    &= \left\{y \in H_m : \inner{y,\etastar_{m+1}} 
      + \sum_{i=1}^m a_{m,i}\delta_i > \delta_{m+1}\right\} \\
    &= \left\{y \in H_m : \inner{y,\etastar_{m+1}} > \delta_{m+1}
      - \sum_{i=1}^m a_{m,i}\delta_i\right\} \\  
    &= \left\{y \in H_m : \inner{y,\etastar_{m+1}} > \deltastar_{m+1}\right\}.
  \end{split}
\end{equation}
A similar argument to that of \eqref{showCplus} verifies that 
$$
  C_i^- = \left\{y \in H_m : \inner{y,\etastar_{m+1}} 
    < \deltastar_{m+1}\right\}.
$$
This confirms that \eqref{newH} holds for $i = 1,...m+1$ and this establishes 
that $\Hyp(m+1)$ follows from $\Hyp(m)$.

Define the sets $D_i$ in \eqref{DiF} with starred quantities replacing the 
unstarred quantities.  Since the vectors 
$\etastar_1$, $\ldots$,$\etastar_j \in N_K(x)$, the sets $K \cap D_i$ are all empty 
for all $i = 1$, $\ldots$, $j-1$.  Thus \eqref{main-bound} holds with  
$
  \lambda\left(\cup_{i=1}^{j-1}D_i\right) = 0.
$
\end{proof}

\begin{proof}[Proof of Theorem~\ref{implem}]
We first consider the case that $A$ is positive definite and $V = \{0\}$.  
We can write $A_n = A + (A_n - A)$ where $(A_n - A)$ is a perturbation 
of $A$ for large $n$.  From Weyl's inequality \citep*{weyl}, we have that all 
eigenvalues of $A_n$ are bounded above zero for large $n$ and 
$V_n = \{0\}$ as a result.  Therefore, $V_n \to V$ as $n\to\infty$ when $A$ 
is positive definite.

Now consider the case that $A$ is not strictly positive definite.  Without loss 
of generality, let $x \in V$ be a unit vector.  
For all $0 < \gamma \leq \varepsilon$, let $V_n(\gamma)$ denote the 
subspace spanned by the eigenvectors of $A_n$ corresponding to eigenvalues 
that are less than $\gamma$.  By construction, $V_n(\gamma) \subseteq V_n$.

From \cite[Example 10.28]{rock-wets},
if $A$ has $k$ zero eigenvalues, then for sufficiently large $N_1$ there are 
exactly $k$ eigenvalues of $A_n$ are less than $\varepsilon$ and $p - k$ 
eigenvalues of $A_n$ greater than $\varepsilon$ for all $n > N_1$.  The same is 
true with respect to $\gamma$ for all $n$ greater than $N_2$.
Thus $j_n(\gamma) = j_n(\varepsilon)$ which implies that $V_n(\gamma) = V_n$ 
for all $n > \max\{N_1,N_2\}$.

We now verify part (i) of Painlev\'{e}-Kuratowski set convergence with respect 
to $V_n(\gamma)$.  Let $N_3$ be such that $x^TA_nx < \gamma^2$ for all 
$n \geq N_3$.  Let $\lkn$ and $\ekn$ be the eigenvalues and eigenvectors of 
$A_n$, with the eigenvalues listed in decreasing orders.  Without loss of 
generality, we assume that the eigenvectors are orthonormal.  Then,
$x = \sum_{k=1}^p(x^T\ekn)\ekn$,
$1 = \|x\|^2 = \sum_{k=1}^p (x^T\ekn)^2$,
and $x^TA_nx = \sum_{k=1}^p \lkn(x^T\ekn)^2$.
There have to be eigenvectors $\ekn$ such that $x^T\ekn \geq 1/\sqrt{p}$ with 
corresponding eigenvalues $\lkn$ that are very small since 
$\lkn(x^T\ekn)^2 < \gamma$.  But conversely, any eigenvalues $\lkn$ such 
that $\lkn \geq \gamma$ must have 
$$
  \lkn(x^T\ekn)^2 < \gamma^2 
    \implies (x^T\ekn)^2 < \gamma^2/\lkn \leq \gamma.
$$
Define $j_n(\gamma) = |\{\lkn:\lkn \leq \gamma\}|$ and 
$
  x_n = \sum_{k=p-j_n(\gamma)+1}^p (x^T\ekn)\ekn
$
where $x_n \in V_n(\gamma)$ by construction.  Now,
\begin{align*}
  \|x - x_n\| &= \|\sum_{k=1}^p (x^T\ekn)\ekn 
    - \sum_{k=p-j_n(\gamma)+1}^p (x^T\ekn)\ekn \| \\
  &= \| \sum_{k=1}^{p-j_n(\gamma)}(x^T\ekn)\ekn \| 
  \leq \sum_{k=1}^{p-j_n(\gamma)}|x^T\ekn| 
  \leq p\sqrt{\gamma}
\end{align*}
for all $n \geq N_3$.  Therefore, for every $x \in V$, there exists a 
sequence $x_n \in V_n(\gamma) \subseteq V_n$ such that $x_n \to x$ since this 
argument holds for all $0 < \gamma \leq \varepsilon$.  This establishes 
part (i) of Painlev\'{e}-Kuratowski set convergence.  

We now show part (ii) of Painlev\'{e}-Kuratowski set convergence.  
Suppose that $x_n \to x \in \R^p$ and there exists a natural number 
$N_4$ such that $x_n \in V_n(\gamma)$ whenever $n \geq N_4$, and we will 
establish that $x \in V$.  From hypothesis, we have that 
$x_n^TA_nx_n \to x^TAx$.  Without loss of generality, we assume that $x$ is a 
unit vector and that $|x_n^TA_nx_n - x^TAx| \leq \gamma$ for all $n \geq N_5$.  
From the assumption that $x_n \in V_n(\gamma)$ we have 
\begin{equation} \label{deriv}
  \begin{split}
    x_n^TA_nx_n &= \sum_{k=1}^p \lkn(x_n^T\ekn)^2 
      = \sum_{k=p-j_n(\gamma)+1}^p \lkn(x_n^T\ekn)^2 
      \leq \gamma
  \end{split}  
\end{equation}
for all $n \geq N_4$.  The reverse triangle inequality gives
$$
  ||x_n^TA_nx_n| - |x^TAx|| \leq |x_n^TA_nx_n - x^TAx| \leq \gamma
$$
and \eqref{deriv} implies 
$
  |x^TAx| \leq 2\gamma
$
for all $n \geq \max\{N_4,N_5\}$.  Since this argument holds for all 
$0 < \gamma < \varepsilon$, we have that $x \in V$.  This establishes 
part (ii) of Painlev\'{e}-Kuratowski convergence with respect to 
$V_n(\gamma)$.  Thus $V_n \to V$. 
\end{proof}




\section{Proofs of the properties of generalized affine functions}

We first prove Theorem~\ref{compact-Hausdorff}.

\begin{proof}
Let $F(E)$ denote the space of all functions $E \to \exreal$ with the
topology of pointwise convergence.   This makes $F(E) = \exreal^E$,
an infinite product.   Then $F(E)$ is compact by Tychonoff's
theorem.   We now show that $G(E)$ is closed in $F(E)$ hence compact.

Let $g$ be any point in the closure of $G(E)$.   Then there is a net
$\{g_\alpha\}$ in $G(E)$ that converges to $g$.   For any $x$ and $y$ in
$E$ such that $g(x) < \infty$ and $g(y) < \infty$ and any $t \in (0, 1)$,
write $z = x + t (y - x)$.

Then
$$
   g_\alpha(z) \le (1 - t) g_\alpha(x) + t g_\alpha(y)
$$
whenever the right hand side makes sense (is not $\infty - \infty$), which
happens eventually, since $g_\alpha(x)$ and $g_\alpha(y)$ both converge to
limits that are not $\infty$.   Hence
$$
   g(z) \le \lambda g(x) + ( 1 - \lambda ) g(y)
$$
and $g$ is convex.   By symmetry it is also concave and hence is
generalized affine.   Thus $G(E)$ contains its closure and is closed.

$F(E)$ is Hausdorff because the product of Hausdorff spaces is Hausdorff.
$G(E)$ is Hausdorff because subspaces of Hausdorff spaces are Hausdorff.
\end{proof}

In order to prove Theorem~\ref{recurse}, an intermediate 
Theorem is first stated and its proof is provided.

\begin{thm}\label{th:non-recursive}
\sloppy
An extended-real-valued function $h$ on a finite-dimensional affine space $E$
is generalized affine if and only if
$h^{- 1}(\infty)$ and $h^{- 1}(-\infty)$ are convex sets,
$h^{- 1}(\R)$ is an affine set, and $h$ is affine on $h^{- 1}(\R)$.
\end{thm}
\begin{proof}
To simplify notation, define
\begin{subequations}
\begin{align}
   A & = h^{- 1}(\R)
   \label{eq:A}
   \\
   B & = h^{- 1}(\infty)
   \label{eq:B}
   \\
   C & = h^{- 1}(-\infty)
   \label{eq:C}
\end{align}
\end{subequations}
First assume $h$ is generalized affine.
Then $C$ is convex because $h$ is convex,
and $B$ is convex because $h$ is concave.
For any two distinct points $x, y \in A$ and any $s \in \R$,
The points $x$, $y$, and $z = x + s (y - x)$ lie on a straight line.
The convexity and concavity inequalities together imply
$$
   h\bigl(x + s (y - x)\bigr)
   =
   (1 - s) h(x) + s h(y).
$$
It follows that $A$ is an affine set and $h$ restricted to $A$
is an affine function.

Conversely, assume $B$ and $C$ are convex sets,
$A$ is an affine set, and $h$ is affine on $A$.
We must show that $h$ is convex and concave.
We just prove convexity because the other proof just the same proof
applied to $- h$.  So consider two distinct points $x, y \in A \cup C$ 
and $0 < t < 1$ (the convexity inequality is vacuous when either of $x$ or 
$y$ is in $B$).  Write $z = x + t (y - x)$.

If $x$ and $y$ are both in $A$, then $A$ being an affine set 
implies $z \in A$ and the convexity inequality involving $x$, $y$, and $z$ 
follows from $h$ being affine on $A$.  
If $x$ and $y$ are both in $C$, then $C$ being a convex set 
implies $z \in C$ and the convexity inequality involving $x$, $y$, and $z$ 
follows from $h(z) = - \infty$.  

The only case remaining is $x \in A$ and $y \in C$.
In this case, there can be no other point on the line determined by $x$ and
$y$ that is in $A$, because $A$ is an affine set.
Hence all the points in this line on one side of $x$ must be in $B$ and
all the points on the other side must be in $C$.   Thus $z \in C$,
and the convexity inequality involving $x$, $y$, and $z$
follows from $h(z) = - \infty$.
\end{proof}

We now provide the proof of Theorem~\ref{recurse}.

\begin{proof}
Again we use the notation in \eqref{eq:A}, \eqref{eq:B}, and \eqref{eq:C}.
First we show that all four cases define generalized affine functions.
The first three cases obviously satisfy the conditions
of Theorem~\ref{th:non-recursive}.

In case (d),
we just prove convexity because the other proof just the same proof
applied to $- h$.

If $x$ and $y$ are both in $H$, then $h$ being generalized affine on $H$
implies the convexity inequality for $x$ and $y$ and any point between them.
If $x$ and $y$ are both in $C$ and not both in $H$, say $x \notin H$,
then any point $z$ between $x$ and $y$ is also not in $H$, and hence is
in $C$ because it is on the same side of $H$ as $x$ is.
So $h(z) = - \infty$ implies the
convexity inequality involving $x$, $y$, and $z$.  That completes the proof 
that all four cases define generalized affine functions.

So we now show that every generalized affine function falls in one of these
four cases.  Suppose $h$ is generalized affine, and assume that we are 
not in case (a), (b), or (c).  Then at least one of $B$ and $C$ is nonempty.
This implies $A \neq E$, hence, $A$ being an affine set, $A^c$ is dense in $E$.
If $B = \emptyset$, then $C$ is dense in $E$, hence $C$ being a convex set,
$C = E$ and we are in case (c) contrary to assumption.
Hence $B \neq \emptyset$.   The same proof with $B$ and $C$ swapped
implies $C \neq \emptyset$.

Hence $B$ and $C$ are disjoint nonempty convex sets,
so by the separating hyperplane
theorem \citep[Theorem~11.3]{rock-convex}, there is an affine function $g$
on $S$ such that
\begin{subequations}
\begin{alignat}{2}
    x & \notin B, & \qquad & \text{when $g(x) < 0$}
    \label{eq:separate-C}
    \\
    x & \notin C, & \qquad & \text{when $g(x) > 0$}
    \label{eq:separate-B}
\end{alignat}
\end{subequations}
and the hyperplane in question is
$$
   H = \set{ x \in E : g(x) = 0 }.
$$

Again we know $A^c$ is dense in $E$, hence $B$ is dense
in the half space on one side of $H$, and $C$ is dense in the half space
on the other side of $H$.   Now convexity of $B$ and $C$ imply
\begin{subequations}
\begin{alignat}{2}
    x & \in C, & \qquad & \text{when $g(x) < 0$}
    \label{eq:separate-C-sharp}
    \\
    x & \in B, & \qquad & \text{when $g(x) > 0$}
    \label{eq:separate-B-sharp}
\end{alignat}
\end{subequations}
That $h$ is generalized affine on $H$ follows from $h$ being generalized
affine on $E$.   Thus we are in case (d).
\end{proof}

We now want to show that $G(E)$ is first countable.   In aid of that
we first prove a lemma.

\begin{lem}\label{lem:lemon}
Every finite-dimensional affine space $E$ is second countable and metrizable.
If $D$ is a countable dense set in $E$, then every point of $E$ is contained
in the interior of the convex hull of some finite subset of $D$.
The same is true of any open convex subset $O$ of $E$:
every point of $O$ is contained
in the interior of the convex hull of some finite subset of $D \cap O$.
\end{lem}
\begin{proof}
The first assertion is trivial.   If the dimension of $E$ is $d$, then
the topology of $E$ is defined to make any invertible affine function
$E \to \R^d$ a homeomorphism.

The second assertion is just the case $O = E$ of the third assertion.

Assume to get a contradiction that the third assertion is false.
Then there is a point $x \in O$ that is disjoint
from the convex hull of $(O \cap D) \setminus \{x\}$.
It follows that there is a strongly separating hyperplane
\citep[Corollary~11.4.2]{rock-convex},
hence an affine function $g$ such that
\begin{align*}
  g(x) & < 0
  \\
  g(y) & > 0, \qquad y \in O \cap D \opand y \neq x
\end{align*}
But this violates $x$ being in $O$.
\end{proof}

We can now prove Theorem~\ref{first-countable}.

\begin{proof}
We need to show there is a countable local base
at $h$ for any $h \in G(E)$.   A set is a neighborhood of $h$ if it has the form
\begin{equation} \label{eq:general-neighborhood}
   \set{ g \in G(E) : g(x) \in O_x, \ x \in F },
\end{equation}
where $F$ is a finite subset of $E$ and each $O_x$ is a neighborhood
of $h(x)$ in $\exreal$.

We prove first countability
by induction on the dimension of $E$ using Theorem~\ref{recurse}.
For the basis of the induction, if $E = \{0\}$, then $G(E)$ is homeomorphic
to $\exreal$, hence actually second countable.

We now show that there is a countable local base at $h$ in each
of the four cases of Theorem~\ref{recurse}.  Fix a countable dense 
set $D$ in $E$ (there is one by Lemma~\ref{lem:lemon}).

There is only one $h$ satisfying case (a),
the constant function having the value $\infty$ everywhere.
In this case, a general neighborhood \eqref{eq:general-neighborhood}
contains a neighborhood of the form
$$
   W = \set{ g \in G(E) : g(x) > m, \ x \in F },
$$
where $m$ can be an integer.
Also by Lemma~\ref{lem:lemon} there exists
a finite subset $V$ of $D$ that contains $F$ in the interior
of its convex hull.
Then, by concavity of elements of $G(E)$, the neighborhood
$$
   W_{m, V} = \set{ g \in G(E) : g(x) > m, \ x \in V }
$$
is contained in $W$.   Hence the collection
\begin{equation} \label{eq:countable-local-base}
   \set{ W_{m, V} : \text{$m \in \nats$ and $V$ a finite subset of $D$} }
\end{equation}
is a countable local base at $h$.

The proof for case (b) is similar.  In case (c) we are considering an affine 
function $h$ on $E$.  In this case, a general 
neighborhood \eqref{eq:general-neighborhood} contains a neighborhood of the 
form
$$
   W = \set{ g \in G(E) : h(x) - \tfrac{1}{m} < g(x)
   < h(x) + \tfrac{1}{m}, \ x \in F },
$$
where $F$ is a finite subset of $E$ and $m$ is a positive integer.

Again use Lemma~\ref{lem:lemon} to choose a finite set $V$ containing
$F$ in the interior of its convex hull.
Then, by convexity and concavity of elements
of $G(E)$, the neighborhood
$$
   W_{m, V} = \set{ g \in G(E) : h(x) - \tfrac{1}{m} <  g(x)
   < h(x) + \tfrac{1}{m}, \ x \in V }.
$$
is contained in $W$ because any $y \in F$ can be written
as a convex combination of the elements of $V$
$$
   y = \sum_{x \in V} p_x x,
$$
where the $p_x$ are nonnegative and sum to one, so $g \in W_{m, n}$ implies
$$
   g(y) \le \sum_{x \in V} p_x g(x)
   < \left( \sum_{x \in V} p_x h(x) \right) + \frac{1}{m}
   = h(y) + \frac{1}{m}
$$
by the convexity inequality, and the same with the inequalities reversed
and $1 / m$ replaced by $- 1 / m$ by the concavity inequality.
Hence the collection \eqref{eq:countable-local-base} with $W_{m, V}$
as defined in this part is a countable local base at $h$.

In case (d) we are considering a generalized affine function $h$ that is
neither affine nor constant.   Then, as the proof of Theorem~\ref{recurse}
shows, there is a hyperplane $H$ that is the boundary of $h^{-1}(\infty)$
and $h^{-1}(-\infty)$.   The induction hypothesis is that $G(H)$ is first
countable, that is, there is a countable family $\mathcal{U}$
of neighborhoods of $h$ in $G(E)$ such that
$$
   \set{ U \cap H : U \in \mathcal{U} }
$$
is a countable local base for $G(H)$ at the restriction of $h$ to $H$.

Again consider a general neighborhood of $h$ \eqref{eq:general-neighborhood};
call it $W$.   Let $g | H$ denote the restriction of $g \in G(E)$ to $H$.
For any subset $Q$ of $G(E)$ let $Q | H$ be defined by
$$
   Q | H = \set{ q | H : q \in Q }.
$$
Then the induction hypothesis is that there exists a $U \in \mathcal{U}$
such that $U | H$ is contained in $W | H$.

Also adopt the notation \eqref{eq:A}, \eqref{eq:B}, and \eqref{eq:C} used
in the proofs of Theorems~\ref{th:non-recursive} and~\ref{recurse}.
By Lemma~\ref{lem:lemon} choose a set $V_B$ in $D \cap (B \setminus H)$
that contains $F \cap (B \setminus H)$ in the interior of its convex hull,
and choose a set $V_C$ in $D \cap (C \setminus H)$
that contains $F \cap (C \setminus H)$ in the interior of its convex hull,

Then, by convexity and concavity of elements
of $G(E)$, the neighborhood
\begin{equation} \label{eq:special-neighborhood}
   W_{m, U, V_B, V_C}
   =
   \set{ g \in U : h(x) \ge m, \ x \in V_B \opand
   h(x) \le - m, \ x \in V_C }
\end{equation}
is contained in $W$.   To see this, first consider $x \in F \cap H$
(if there are any).   Any $g$ in \eqref{eq:special-neighborhood} has
$g(x) \in O_x$ because of $U | H \subset W | H$.
Next consider $x \in F \cap B$
(if there are any).   Any $g$ in \eqref{eq:special-neighborhood} has
$g(x) \in O_x$ because of concavity of $g$ assures $g(x) \ge m$,
and we chose $m$ so that $(m, \infty) \subset O_x$.
Last consider $x \in F \cap C$
(if there are any).   Any $g$ in \eqref{eq:special-neighborhood} has
$g(x) \in O_x$ because of convexity of $g$ assures $g(x) \le - m$,
and we chose $m$ so that $(-\infty, -m) \subset O_x$.

Hence the collection
$$
   \set{ W_{m, U, V \cap (B \setminus H), V \cap (C \setminus H)} :
   \text{$m \in \nats$ and $U \in \mathcal{U}$ and $V$ a finite subset of $D$} }
$$
is a countable local base at $h$.

We forgot the case where $E$ is empty.   Then $G(E)$ is a one-point space
whose only element is the empty function (that has no argument-value pairs).
It is trivially first countable.
\end{proof}

We now prove Theorem~\ref{vec-char}.

\begin{proof}
First, assume $h$ satisfies the conditions of Theorem~\ref{recurse} on $E$.  We then show that $h$ satisfies the conditions of Theorem~\ref{vec-char} by induction on the dimension of $E$.  The induction  hypothesis, $\Hyp(p)$, is that the conclusions of Theorem~\ref{recurse} imply that the conclusions of Theorem~\ref{vec-char} hold when dim($E$) = $p$.  We now show that $\Hyp(0)$ holds.  In this setting, $E = \{0\}$.  Therefore our result holds with 
$j=0$ and $h$ is constant on $E$.  The basis of the induction holds.   

Let dim($E$) = $p+1$.  We now show that $\Hyp(p)$ implies that $\Hyp(p+1)$ holds.  In the event that $h$ is characterized by case (a) or (b) of Theorem~\ref{recurse} then our result holds with $j = 0$.  If case (c) of Theorem~\ref{recurse} characterizes $h$ then there is an affine function $f_1$ defined by 
$f_1(x) = \inner{x,\eta_1} - \delta_1, \, x \in E$, 
such that $h(x) = +\infty$ for $x$ such that $f_1(x) > 0$, $h(x) = -\infty$ 
for $x$ such that $f_1(x) < 0$, and $h$ is generalized affine on the hyperplane $H_1 = \{x: f_1(x) = 0\}$.  The hyperplane $H_1$ is $p$-dimensional affine subspace of $E$.  Now, for some arbitrary $\zeta_1 \in H_1$, define 
\begin{align*}
V_1 &= \{ x - \zeta_1: x \in H_1 \} \\
  &= \{y \in E: \inner{y,\eta_1} = \delta_1 - \inner{\zeta_1,\eta_1} \} \\
  &= \{y \in E: \inner{y,\eta_1} = 0 \}
\end{align*}
where the last equality follows from $\zeta_1 \in H_1$.  The space $V_1$ is a $p$-dimensional vector subspace of $E$ since every affine space containing the origin is a vector subspace \citep*[Theorem 1.1]{rock-convex} and because every translate of an affine space is another affine space \citep*[pp.  4]{rock-convex}.  Let 
\begin{equation} \label{h1}
  h_1(y) = h(y+\zeta_1), \qquad y \in V_1.  
\end{equation}
The function $h_1$ is convex since the composition of a convex function with an affine function is convex.  To see this, let $0 < \lambda < 1$, pick $y_1, y_2 \in V_1$ and observe that
\begin{align*}
h_1(\lambda y_1 + (1-\lambda)y_2) &= h(\lambda y_1 + 
    (1-\lambda)y_2 + \zeta_1) \\
  &\leq  \lambda h(y_1 + \zeta_1) + (1-\lambda)h(y_2 + \zeta_1) \\
  &= \lambda h_1(y_1) + (1-\lambda)h_1(y_2).
\end{align*}
A similar argument shows that $h_1$ is concave.  Therefore $h_1$ is generalized affine.  From our induction hypothesis, the conclusions of Theorem~\ref{recurse} imply that our result holds for the generalized affine function $h_1$.  These conditions are that there exist finite sequences of vectors 
$\tilde{\eta}_2$, $\ldots$, $\tilde{\eta}_j$ being a linearly independent subset of $V_1^{\textstyle{*}}$, the dual space of $V_1$, and scalars 
$\tilde{\delta}_2$, $\ldots$, $\tilde{\delta}_j$ such that $h_1$ has the following form.  Define $\widetilde{H}_1 = V_1$ and, inductively, for integers $i$ such that $2 < i \le j$
\begin{equation} \label{planeHstar}
  \begin{split}
    \widetilde{H}_i & = \set{ x \in \widetilde{H}_{i - 1} : 
      \inner{x, \tilde{\eta}_i} = \tilde{\delta}_i } \\
    \widetilde{C}_i^+ & = \set{ x \in \widetilde{H}_{i - 1} : 
      \inner{x, \tilde{\eta}_i} > \tilde{\delta}_i } \\
    \widetilde{C}_i^- & = \set{ x \in \widetilde{H}_{i - 1} : 
      \inner{x, \tilde{\eta}_i} < \tilde{\delta}_i }
  \end{split}
\end{equation}
all of these sets (if any) being nonempty.  Then $h_1(x) = + \infty$ whenever 
$x \in \widetilde{C}_i^+$ for any $i$, $h_1(x) = - \infty$ whenever 
$x \in \widetilde{C}_i^-$ for any $i$, and $h_1$ is either affine or constant 
on $\widetilde{H}_j$, where $+\infty$ and $-\infty$ are allowed for constant 
values.

It remains to show that the conditions of Theorem~\ref{vec-char} hold with respect to $h$.  The vectors $\tilde{\eta}_i$, $i = 2,...,j$ can be extended to form a set of vectors $\eta_i$, $i = 2,...,j$ in $\Estar$ by the Hahn-Banach Theorem \citep*[Theorem 3.6]{rudin}.  The vectors $\eta_i$, $i = 2,...,j$, form a linearly independent subset of $\Estar$.  To see this, let $\sum_{k=2}^ja_k\eta_k = 0$ on $E$ for scalars $a_k$, $k = 2,...,j$.  Then $\sum_{k=2}^ja_k\eta_k = 0$ on $V_1$ which implies that $a_k = 0$ for $k = 2,...,j$ by the definition of linearly independent.  Let $H_0 = E$, and, for $i = 2,...,j$, define 
\begin{equation} \label{planeH}
  \begin{split}
     H_i & = \set{ x \in H_{i - 1} : \inner{x, \eta_i} = \delta_i } \\
     C_i^+ & = \set{ x \in H_{i - 1} : \inner{x, \eta_i} > \delta_i } \\
     C_i^- & = \set{ x \in H_{i - 1} : \inner{x, \eta_i} < \delta_i }
  \end{split}
\end{equation}
where $\delta_i = \tilde{\delta}_i - \inner{\zeta_1, \eta_i}$ for $i = 2,...,j$ and $\widetilde{H}_i = H_i + \zeta_1$ as a result.  We see that $h(x) = h_1(x - \zeta_1) = +\infty$ whenever $\inner{x + \zeta_1,\eta_i} > \tilde{\delta}_i$.  Therefore $h(x) = +\infty$ for all $x \in C_i^+$ for any $i$.  The same derivation shows that $h(x) = -\infty$ whenever $x \in C_i^-$ for any $i$.  The generalized affine function $h$ is either affine or constant on $H_j$, where $+\infty$ and $-\infty$ are allowed for constant values since the composition of an affine function with an affine function is affine.  

We now show that the vectors $\eta_1,...,\eta_j$ are linearly independent.  Assume that $\sum_{k=1}^ja_k\eta_k = 0$ on $E$ for scalars $a_k$, $k = 1,...,j$.  This assumption implies that $\sum_{k=1}^ja_k\tilde{\eta} = 0$ on $V_1^{\textstyle{*}}$ where $\tilde{\eta}_1$ is the restriction of $\eta_1$ to $V_1$.  Thus $\tilde{\eta}_1$ is an element of $V_1^{\textstyle{*}}$ and $\tilde{\eta}_1 = 0$ on $V_1$ since 
$
  \inner{y,\tilde{\eta}_1} = \inner{y,\eta_1} = 0 
$
on $V_1$.  Therefore $\sum_{k=2}^ja_k\tilde{\eta}_k = 0$ where $a_k = 0$ for $k = 2,...,j$ from what has already been shown.  In the event that $a_1 = 0$, we can conclude that $\eta_1,\ldots,\eta_j$ are linearly independent.  Now consider $a_1 \neq 0$.  In this case, $\sum_{k=1}^ja_k\eta_k = 0$ implies that $\eta_1 = \sum_{k=2}^jb_k\eta_k$ where $b_k = -a_k/a_1$.  This states that $\sum_{k=2}^jb_k\tilde{\eta}_k = 0$ on $V_1$.  Therefore, $b_k = 0$ for all $k = 2,...,j$ which implies that $\eta_1$ is the zero vector, which is a contradiction.  Thus $a_1 = 0$ and we can conclude that $\eta_1,...,\eta_j$ are linearly independent.  This completes one direction of the proof.

Now assume that $h$ satisfies the conclusions of Theorem~\ref{vec-char} and 
show that these conclusions imply that Theorem~\ref{recurse} holds by induction on $j$.  The induction hypothesis, $\Hyp(j)$, is that the conclusions of Theorem~\ref{vec-char} imply that the conclusions of Theorem~\ref{recurse}  hold for sequences of length $j$.  For the basis of the induction let $j = 0$.  We now show that $\Hyp(0)$ holds.  The generalized affine function $h$ is either affine or constant on $E$ where $+\infty$ and $-\infty$ are allowed for constant values.  This characterization of $h$ is the same as cases (a) of (b) of Theorem~\ref{recurse}.  The basis of the induction holds.  

We now show that $\Hyp(j)$ implies that $\Hyp(j+1)$ holds.  When the length of sequences is $j+1$, there exist vectors $\eta_1,...,\eta_{j+1}$ and scalars $\delta_1,...,\delta_{j+1}$ such that $h$ has the following form.  Define $H_0 = E$ and, inductively, for integers $i$, $0 < i \leq j+1$, such that the sets in \eqref{planeH} are all nonempty.  Then $h(x) = +\infty$ whenever $x \in C_i^+$ for any $i$, $h(x) = -\infty$ whenever $x \in C_i^-$ for any $i$, and $h$ is either affine or constant on $H_{j+1}$, where $+\infty$ and $-\infty$ are allowed for constant values.  From the definition of the sets $H_1$, $C_1^+$, and $C_1^-$, there is an affine function $f_{1}$ defined by $f_{1}(x) = \inner{x, \eta_{1}} - \delta_{1},\, x \in E$, such that $h(x) = +\infty$ for all $x \in E$ such that $f_{1}(x) > 0$ and $h(x) = -\infty$ for all $x \in E$ such that $f_{1}(x) < 0$.  This is equivalent to the case (c) characterization of $h$ in Theorem~\ref{recurse}, provided we show that the restriction of $h$ to $H_1$ is a generalized affine function.

Define $V_1 = H_1 - \zeta_1$ for some arbitrary $\zeta_1 \in H_1$.  Let dim($E$) = $p$.  The space $V_1$ is a ($p-1$)-dimensional vector subspace of $E$.  Define $h_1$ as in \eqref{h1}.  Let $\tilde{\eta}_i$ be the restriction of $\eta_i$ to $V_1$ so that $\tilde{\eta}_i$ is an element of $V_1^{\textstyle{*}}$ for $1 < i \leq j+1$.  Now let $\widetilde{H}_1 = V_1$ and, for $1 < i \leq j+1$, we can define the sets as in \eqref{planeHstar} where $\tilde{\delta}_i = \delta_i - \inner{\zeta_1, \tilde{\eta}_i}$.  We see that $h_1(x) = h(x + \zeta_1) = +\infty$ whenever $\inner{x + \zeta_1,\eta_i} > \tilde{\delta}_i$.  Therefore $h_1(x) = +\infty$ for all $x \in \widetilde{C}_i^+$ for any $i$.  The same derivation shows that $h_1(x) = -\infty$ whenever $x \in \widetilde{C}_i^-$ for any $i$.  The generalized affine function $h_1$ is either affine or constant on $H_{j+1}$, where $+\infty$ and $-\infty$ are allowed for constant values.  Therefore $h_1$ meets the conditions of Theorem~\ref{vec-char} with sequences of length $j$.  From $\Hyp(j)$, we know that the conclusions of Theorem~\ref{recurse} hold with respect to $h_1$.  This completes the proof.
\end{proof}

We now prove Lemma~\ref{properties-1} using the characterization of generalized affine functions on finite-dimensional vector spaces given by Theorem~\ref{vec-char}.

\begin{proof}
First suppose that $h_n$ converges to $h$.  The assumption that $h$ is finite at at least one point guarantees that $h$ is affine on $H_j$ from Theorem~\ref{vec-char}.  For all $y \in H_j$ we can write 
$
  h(y) = \inner{y, \tstar} + a
$  
where $\inner{y, \tstar} = \sum_{i=j+1}^p d_i\inner{y, \eta_i}$ and $s, d_i \in \R$.  The convergence $h_n\to h$ implies that $b_{i,n} \to d_i$, $i = j+1,...,p$ where the set of $b_{i,n}$s is empty when $j = p$ and that $a_n \to a$ as $n\to\infty$.  Thus conclusions (c) and (d) hold.  To show that conclusions (a) and (b) hold we will suppose that $j > 0$, because these conclusions are vacuous when $j = 0$.  Both cases (a) and (b) will be shown by induction with the hypothesis $\Hyp(m)$ that $b_{(j-m),n}\to +\infty$ and $b_{(j-m+1),n}/b_{(j-m),n} \to 0$ as $n\to\infty$ for $0 \leq m \leq j-1$.  We now show that the basis of this induction holds.  Pick $y \in C_j^+$ and observe that 
$$
  h_n(y) = a_n + b_{j, n}\left(\inner{y,\eta_j} - \delta_j\right) 
    + \sum_{k = j + 1}^p b_{k, n} \inner{y, \eta_k} \to +\infty.
$$
since $h(y) = +\infty$ and $h_n\to h$ pointwise.  From this, we see that $b_{j,n} \to +\infty$ as $n\to\infty$ and $b_{j+1,n}/b_{j,n} \to 0$ as $n\to\infty$ from part (c).  Therefore $\Hyp(0)$ holds.  It is now shown that $\Hyp(m)$ implies that $\Hyp(m+1)$ holds.  There exists a basis $y_1,...,y_p$ in $E^{\textstyle{*}\textstyle{*}}$, the dual space of $\Estar$, such that $\inner{y_i, \eta_k} = 0$ when $i \neq k$ and $\inner{y_i, \eta_k} = 1$ when $i = k$.  The set of vectors $y_1,...,y_p$ is a basis of $E$ since  $E = E^{\textstyle{*}\textstyle{*}}$.  Arbitrarily choose a $y \in H_{j-m-1}$ such that 
$
  y = \sum_{i=1}^{j-m-1} \delta_i y_i + c_1y_{j-m}
$
where $c_1 > \delta_{j-m}$.  At this choice of $y$ we see that $h(y) = +\infty$ 
and 
\begin{align*}
  h_n(y) &= a_n + \sum_{i=1}^{j-m+1}b_{i,n}\left(\inner{y, \eta_i} 
    - \delta_i\right) \\
  &= a_n + b_{(j-m),n}\left(\inner{y, \eta_{j-m}} - \delta_{j-m}\right) \\
  &\to +\infty
\end{align*}
as $n\to\infty$.  Therefore $b_{(j-m),n}\to +\infty$ as $n\to\infty$.  Now arbitrarily choose 
$
  y = \sum_{i=1}^{j-m-1} \delta_i y_i + c_1y_{j-m} + c_2y_{j-m+1}
$ 
where $c_1$ is defined as before and $c_2 < \delta_{j-m+1}$.  At this choice of $y$ we see that $h(y) = +\infty$ and 
\begin{equation} \label{ind}
  \begin{split}
    h_n(y) &= a_n + \sum_{i=1}^{j-m+1}b_{i,n}\left(\inner{y, \eta_i} 
      - \delta_i\right) \\
    &= a_n + b_{(j-m),n}\left(\inner{y, \eta_{j-m}} - \delta_{j-m} \right.  \\
      &\left.\qquad+  \frac{b_{(j-m+1),n}}{b_{(j-m),n}} 
      \left(\inner{y, \eta_{j-m+1}} - \delta_{j-m+1}\right)\right) \\  
    &= a_n + b_{(j-m),n}\left(c_1 - \delta_{j-m}
      -  \frac{b_{(j-m+1),n}}{b_{(j-m),n}} \left(c_2 - \delta_{j-m+1}
        \right)\right) \\
    &\to +\infty
  \end{split}
\end{equation}
as $n\to\infty$.  It follows from \eqref{ind} that 
$$
  \left(c_1 - \delta_{j-m} - \frac{b_{(j-m+1),n}}{b_{(j-m),n}} 
    \left(c_2 - \delta_{j-m+1}\right)\right) \geq 0
$$
for sufficiently large $n$.  This implies that 
$$
  \frac{b_{(j-m+1),n}}{b_{(j-m),n}} \leq 
    \frac{c_1 - \delta_{j-m}}{\delta_{j-m-1} - c_2}
$$
for sufficiently large $n$.  From the arbitrariness of the constants $c_1$ and $c_2$ and \eqref{ind}, we can conclude that $b_{(j-m+1),n}/b_{(j-m),n}\to 0$ as $n\to\infty$.  Therefore $\Hyp(m+1)$ holds and this completes one direction of the proof.  

We now assume that conditions (a) through (d) and the $h_n$ takes the form in \eqref{affine-seq}.  Let 
$
  \lim_{n\to\infty} \sum_{i=j+1}^p b_{i,n}\eta_i = \tstar
$
and $\lim_{n\to\infty} a_n = a$.  Cases (a) through (d) then imply that 
\begin{equation} \label{limit-seq}
  h_n(y) \to \left\{ \begin{array}{cc}
    -\infty, &\qquad y \in C_i^- \\
    \inner{y,\tstar} + a, &\qquad y \in H_j \\
    +\infty, &\qquad y \in C_i^+
  \end{array}\right.
\end{equation}
for all $i = 1,...,j$ where the right hand side of \eqref{limit-seq} is a generalized affine function in its Theorem~\ref{vec-char} representation.  This completes the proof.
\end{proof}

\section{Proofs of MGF and moment convergence results}

We first prove Theorem~\ref{MVmgf-1}.

\begin{proof}
Suppose $\varphi_X$ is an MGF, hence finite on a neighborhood $W$ of zero.Fix $t \in \Estar$.   Then by \eqref{eq:cramer-wold} $\varphi_{\inner{X, t}}(s)$ is finite whenever $s t \in W$.  Continuity of scalar multiplication means there exists an $\varepsilon > 0$ such that $s t \in W$ whenever $\abs{s} < \varepsilon$.
That proves one direction.

Conversely, suppose $\varphi_{\inner{X, t}}$ is an MGF for each $t \in \Estar$.   Suppose $v_1$, $\ldots,$ $v_d$ is a basis for $E$ and $w_1$, $\ldots,$ $w_d$ is the dual basis for $E^{\textstyle *}$ that satisfies \eqref{eq:dual-bases}. Then there exists $\varepsilon > 0$ such that $\varphi_{\inner{X, w_i}}$ is finite on $[- \varepsilon, \varepsilon]$ for each $i$.

We can write each $t \in E^{\textstyle *}$ as a linear combination of basis vectors
$$
   t = \sum_{i = 1}^d a_i w_i,
$$
where the $a_i$ are scalars that are unique \citep[Theorem~1 of Section~15]{halmos}. Applying \eqref{eq:dual-bases} we get
$$
   \inner{v_j, t} = a_j,
$$
so
$$
   t = \sum_{i = 1}^d \inner{v_i, t} w_i,
$$
and
$$
   \inner{X, t} = \sum_{i = 1}^d \inner{v_i, t} \inner{X, w_i}.
$$
Suppose
$$
   \abs{\inner{v_i, t}} \le \varepsilon, \qquad i = 1, \ldots, d
$$
(the set of all such $t$ is a neighborhood of $0$ in $E^{\textstyle *}$). Let $\sign$ denote the sign function, which takes values $- 1$, $0$, and $+1$ as its argument is negative, zero, or positive, and write
$$
   s_i = \sign( \inner{v_i, t} ), \qquad i = 1, \ldots, d.
$$
Then we can write $\inner{X, t}$ as a convex combination
$$
   \inner{X, t} = \sum_{i = 1}^d \frac{\inner{v_i, t}}{s_i \varepsilon}
   \cdot s_i \varepsilon \inner{X, w_i}
   +
   \left( 1 - \sum_{i = 1}^d \frac{\inner{v_i, t}}{s_i \varepsilon} \right)
   \cdot \inner{X, 0}.
$$
So, by convexity of the exponential function,
$$
   \varphi_X(t)
   \le
   \sum_{i = 1}^d \frac{\inner{v_i, t}}{s_i \varepsilon}
   \varphi_{\inner{X, w_i}}(s_i \varepsilon)
   +
   \left( 1 - \sum_{i = 1}^d \frac{\inner{v_i, t}}{s_i \varepsilon} \right)
   <
   \infty.
$$
That proves the other direction.
\end{proof}

We now prove Theorem~\ref{MVmgf-2}.

\begin{proof}
The one-dimensional case of this theorem is proved in \cite{billingsley}.  We only need to show the general case follows by Cram\'{e}r-Wold.  It follows from the assumption that $\varphi_{\inner{X_n, t}}$ converges on a neighborhood $W$ of zero for each $t \in \Estar$.  Then \eqref{eq:weak} follows from the one-dimensional case of this theorem and the Cram\'{e}r-Wold theorem.   And this implies
$$
   \inner{X_n, t} \weakto \inner{X, t}, \qquad t \in E^{\textstyle *}.
$$
By the one-dimensional case of this theorem, this implies $\inner{X, t}$ has an MGF for each $t$, and then Theorem~\ref{MVmgf-1} implies $X$ has an MGF $\varphi_X$. By the one-dimensional case of this theorem, $\varphi_{\inner{X_n, t}}$ converges pointwise to $\varphi_{\inner{X, t}}$.  So by \eqref{eq:cramer-wold}, $\varphi_{X_n}$ converges pointwise to $\varphi_X$.
\end{proof}

We now prove Theorem~\ref{MVmgf-3}.

\begin{proof}
From Theorem~\ref{MVmgf-2}, we have that $\inner{X_n,t_i} \weakto \inner{X,,t_i}$.  Continuity of the exponential function implies that $e^{\inner{X_n,t_i}} \weakto e^{\inner{X,t_i}}$.  
Now, pick an $\varepsilon > 0$ such that both $\varepsilon\sum_{i=1}^k t_i \in W$ and $\varepsilon\sum_{i=1}^k u_i \in W$ where $u_1 = -t_1$ and $u_i = t_i$ 
for all $i > 1$. This construction gives 
\begin{equation} \label{bils-1}
  e^{\inner{X_n,\varepsilon\sum_{i=1}^kt_i}} \weakto 
  e^{\inner{X,\varepsilon\sum_{i=1}^kt_i}}
\end{equation}
and
\begin{equation} \label{bils-2}
  \E\left(e^{\inner{X_n,\varepsilon\sum_{i=1}^kt_i}}\right) \weakto 
  \E\left(e^{\inner{X,\varepsilon\sum_{i=1}^kt_i}}\right).
\end{equation}
Equations \eqref{bils-1} and \eqref{bils-2} imply that 
$
  e^{\inner{X_n,\varepsilon\sum_{i=1}^kt_i}}
$
is uniformly integrable by \cite[Theorem 3.6]{billingsley-conv-meas}.  
A similar argument shows that 
$
  e^{\inner{X_n,\varepsilon\sum_{i=1}^ku_i}}
$
is uniformly integrable.  We now bound 
$\abs{\varepsilon^k\prod_{i = 1}^k \inner{X_n, t_i}}$ to show uniform 
integrability of $\prod_{i = 1}^k \inner{X_n, t_i}$.  Define
$$
  A_n = \{X_n : \prod_{i = 1}^k \inner{X_n, t_i} \geq 0\}.
$$
and let $I_A$ be the indicator function.  We have,
\begin{align*}
  \varepsilon^k\prod_{i = 1}^k \inner{X_n, t_i} 
    &\leq \prod_{i = 1}^k \inner{X_n, \varepsilon t_i}I_{A_n} \\
    &\leq e^{\inner{X_n,\varepsilon\sum_{i=1}^kt_i}}I_{A_n} \\
    &\leq e^{\inner{X_n,\varepsilon\sum_{i=1}^kt_i}}
\end{align*}
and 
\begin{align*}
  -\varepsilon^k\prod_{i = 1}^k \inner{X_n, t_i} 
    &= \prod_{i = 1}^k \inner{X_n, \varepsilon u_i} \\
    &\leq \prod_{i = 1}^k \inner{X_n, \varepsilon u_i}I_{A_n^c} \\
    &\leq e^{\inner{X_n,\varepsilon\sum_{i=1}^ku_i}}I_{A_n^c} \\
    &\leq e^{\inner{X_n,\varepsilon\sum_{i=1}^ku_i}}.
\end{align*}
Therefore 
$$
  \abs{\varepsilon^k\prod_{i = 1}^k \inner{X_n, t_i}} 
    \leq e^{\inner{X_n,\varepsilon\sum_{i=1}^k t_i}} 
      + e^{\inner{X_n,\varepsilon\sum_{i=1}^k u_i}}
$$
The sum of uniformly integrable is uniformly integrable.  This implies that $\abs{\varepsilon^k\prod_{i = 1}^k \inner{X_n, t_i}}$ is uniformly integrable.  Scaling of uniformly integrable is also uniformly integrable, which implies $\prod_{i = 1}^k \inner{X_n, t_i}$ is uniformly integrable.  Our result follows from \cite[Theorem 3.5]{billingsley-conv-meas} and this completes the proof.
\end{proof}

\section{Counterexample}
\label{counterexample}

This section provides a counterexample to the non-theorem which is Theorem~\ref{main} with its conditions removed (that is, the assertion that cumulant generating function convergence always occurs). It shows that some conditions like those the theorem requires are needed.

\subsection{Model}

Suppose we have a two-dimensional exponential family with generating measure $\lambda$ concentrated on the set
$$
   S = \{ (0, 0), (0, 1) \} \cup \set{ (1, n) : n \in \nats },
$$
where $\nats$ is the set of natural numbers 0, 1, 2, $\ldots.$ And suppose $\lambda$ takes values
$$
   \lambda(x) = \frac{1}{x_2 !}, \qquad x \in S.
$$

The Laplace transform of $\lambda$ is the function of $\theta$ given by
$$
   1 + e^{\theta_2} +
   e^{\theta_1} \sum_{x_2 = 0}^\infty \frac{e^{x_2 \theta_2}}{x_2 !}
   =
   1 + e^{\theta_2} +
   e^{\theta_1} e^{e^{\theta_2}}
$$
and the cumulant function (log Laplace transform) is
\begin{equation} \label{eq:example-cumfun}
   c(\theta)
   =
   \log\left[
   1 + e^{\theta_2} +
   e^{\theta_1 + e^{\theta_2}}
   \right]
\end{equation}

\subsection{Maximum Likelihood}

Suppose the observed value of the canonical statistic is $x = (0, 1)$.

From Chapter~2 of \citet{geyer} we know that we can find the MLE
in the completion of the family by taking limits first in the direction
$\eta_1 = (-1, 0)$ (which is a direction of recession) and second in the direction
$\eta_2 = (0, 1)$ (which is a direction of recession for the limiting conditional
model resulting from the first limit).  Thus the MLE in the completion
is the completely degenerate distribution concentrated at the observed data.
The Theorem 5 (in the main article) characterization of the corresponding 
generalized affine function evaluated at data $x$, $h(x)$, yields set 
$C_1^- = \{(1,n):n\in\nats\}$ and thus $D_1 = \{(1,n),n\nats,\;n\geq 1\}$.  
Clearly $\lambda(D_1) > 0$, and we have 
$$
  \sup_{\theta \in \Theta}\sup_{y \in D_1} 
    e^{\inner{y,\theta} - c_{D_1}(\theta)} 
  \geq 
    \sup_{y \in \nats}e^{\inner{(1,y),(0,1)} - c_{D_1}((0,1))} = \infty.
$$
Therefore the bound condition of Theorem 6 in the main article is violated.  
We now show that CGF convergence along a likelihood maximizing sequence 
fails for $t$ in a neighborhood of 0.

\subsection{Log Likelihood}

The log likelihood is
\begin{align*}
   l(\theta)
   & =
   x_1 \theta_1 + x_2 \theta_2 - c(\theta)
   \\
   & =
   \theta_2 - c(\theta)
   \\
   & =
   - \log\left[
   e^{- \theta_2} + 1 +
   e^{\theta_1 - \theta_2 + e^{\theta_2}}
   \right]
\end{align*}

\subsection{Likelihood Maximizing Sequences}

Because the MLE in the completion is completely degenerate and because
$\lambda(x) = 1$, the log likelihood must go to $\log(1) = 0$ along any
likelihood maximizing sequence.

We know from Lemma~\ref{properties-1} in the main article
that any likelihood maximizing sequence $\theta_n$ must have
\begin{enumerate}
\item[(i)] $\theta_{1, n} \to - \infty$,
\item[(ii)] $\theta_{2, n} \to + \infty$,
\item[(iii)] $\abs{\theta_{2, n} / \theta_{1, n}} \to 0$,
\end{enumerate}
but now we see that, in this example, it must also have
\begin{enumerate}
\item[(iv)] $\theta_{1, n} - \theta_{2, n} + e^{\theta_{2, n}} \to - \infty$.
\end{enumerate}

Thus we see that Lemma~1 doesn't tell us everything
about likelihood maximizing sequences (it may do under the conditions
of Brown).

\subsection{Cumulant Generating Function Convergence}

The cumulant generating function for canonical parameter value $\theta$ is
$$
   k_\theta(t) = c(\theta + t) - c(\theta).
$$
Thus along a likelihood maximizing sequence we have
\begin{align*}
   k_{\theta_n}(t)
   & =
   \log\left[
   \frac{
   1 + e^{\theta_2 + t_2} +
   e^{\theta_1 + t_1 + e^{\theta_2 + t_2}}
   }{
   1 + e^{\theta_2} +
   e^{\theta_1 + e^{\theta_2}}
   }
   \right]
   \\
   & =
   \log\left[
   \frac{
   e^{- \theta_2} + e^{t_2} +
   e^{\theta_1 - \theta_2 + t_1 + e^{\theta_2 + t_2}}
   }{
   e^{- \theta_2} + 1 +
   e^{\theta_1 - \theta_2 + e^{\theta_2}}
   }
   \right]
\end{align*}

We know the denominator of the fraction converges to one along any
likelihood maximizing sequence.  The cumulant generating function of
the distribution concentrated at $x$ is the log of
$$
   e^{0 \cdot t_1 + 1 \cdot t_2}
$$
so
$$
   k_\text{limit}(t) = t_2
$$
Thus we see that to get the correct limit we need a different condition
\begin{enumerate}
\item[(v)] $\theta_{1, n} - \theta_{2, n} + e^{\theta_{2, n} + t_2} \to - \infty$.
\end{enumerate}
Since (i) through (iv) do not imply (v) unless $t_2 \le 0$, we cannot
guarantee cumulant generating function convergence on a neighborhood
of zero.

Suppose, for concreteness
\begin{equation} \label{eq:concrete-sequence}
   \theta_n = (- n, \log(n))
\end{equation}
so the sequence in (v) becomes
$$
   - n - \log(n) + n e^{t_2}
$$
Hence condition (v) is not satisfied unless $t_2 \le 0$,
but conditions (i) through (iv) are satisfied.

\subsection{Nonconvergence of First Moments}

First moments (of the canonical statistic) are given by differentiating
the cumulant function \eqref{eq:example-cumfun}
$$
   \nabla c(\theta)
   =
   \begin{pmatrix}
   \frac{e^{\theta_1 + e^{\theta_2}}}
   {1 + e^{\theta_2} + e^{\theta_1 + e^{\theta_2}}}
   \\
   \frac{e^{\theta_2} + e^{\theta_1 + e^{\theta_2} + \theta_2}}
   {1 + e^{\theta_2} + e^{\theta_1 + e^{\theta_2}}}
   \end{pmatrix}
$$
The first moment of the LCM, which is concentrated at $x$ is just $x$.
So the necessary and sufficient condition for convergence of first moments to
the first moments of the LCM is
\begin{align*}
   \frac{e^{\theta_{1, n} + e^{\theta_{2, n}}}}
   {1 + e^{\theta_{2, n}} + e^{\theta_{1, n} + e^{\theta_{2, n}}}}
   & \to 0
   \\
   \frac{e^{\theta_{2, n}} + e^{\theta_{1, n} + e^{\theta_{2, n}}
       + \theta_{2, n}}}
   {1 + e^{\theta_{2, n}} + e^{\theta_{1, n} + e^{\theta_{2, n}}}}
   & \to 1
\end{align*}

For the specific likelihood maximizing sequence \eqref{eq:concrete-sequence}
we have
\begin{align*}
   \frac{e^{\theta_{1, n} + e^{\theta_{2, n}}}}
   {1 + e^{\theta_{2, n}} + e^{\theta_{1, n} + e^{\theta_{2, n}}}}
   & =
   \frac{e^{- n + n}}
   {1 + n + e^{- n + n}}
   \\
   & =
   \frac{1}{2 + n}
   \\
   \frac{e^{\theta_{2, n}} + e^{\theta_{1, n} + e^{\theta_{2, n}}
       + \theta_{2, n}}}
   {1 + e^{\theta_{2, n}} + e^{\theta_{1, n} + e^{\theta_{2, n}}}}
   & =
   \frac{n + e^{- n + n + \log(n)}}
   {1 + n + e^{- n + n}}
   \\
   & =
   \frac{2 n}{2 + n}
\end{align*}
The first converges to 0 as it must for CGF convergence.
The second converges to 2, but it must converge to 1 for CGF convergence.
So we do not get convergence of first moments for this model and this
likelihood maximizing sequence, hence cannot have CGF convergence.
%
%

\subsection{Nonconvergence of Second Moments}

Non-convergence of first moments already makes CGF convergence impossible,
but since our main interest in CGF convergence is convergence of second
moments, which are components of the Fisher information matrix, we compute
them too.

For $c$ given by \eqref{eq:example-cumfun} and $\theta_n$ given by
\eqref{eq:concrete-sequence}
$$
   \nabla^2 c(\theta_n)
   =
   \frac{1}{(2 + n)^2}
   \begin{pmatrix}
   1 + n
   &
   n^2
   \\
   n^2
   &
   n (4 + n^2)
   \end{pmatrix}
   \to
   \begin{pmatrix}
   0
   &
   1
   \\
   1
   &
   \infty
   \end{pmatrix}
$$
The variance-covariance matrix for the LCM is the zero matrix
(the variance-covariance matrix of a completely degenerate distribution).
Hence we do not get convergence of Fisher information for this example.

\section{R}

\begin{itemize}
\item The version of R used to make this document is 3.6.1.
\item The version of the \texttt{knitr} package used to make this document is
    1.24.
\item The version of the \texttt{glmdr} package used to make this document is
    0.1.    
\item The version of the \texttt{rcdd} package used to make this document is
    1.2.2.
\item The version of the \texttt{numDeriv} package used to make this document is
    2016.8.1.1.
\item The version of the \texttt{alabama} package used to make this document is
    2015.3.1.
\item The version of the \texttt{Matrix} package used to make this document is
    1.2.17.
\end{itemize}

Load these packages.
\begin{knitrout}
\definecolor{shadecolor}{rgb}{0.969, 0.969, 0.969}\color{fgcolor}\begin{kframe}
\begin{alltt}
\hlkwd{library}\hlstd{(glmdr)}
\hlkwd{library}\hlstd{(rcdd)}
\end{alltt}

{\ttfamily\noindent\itshape\color{messagecolor}{\#\# If you want correct answers, use rational arithmetic.\\\#\# See the Warnings sections added to help pages for\\\#\#\ \ \ \  functions that do computational geometry.}}\begin{alltt}
\hlkwd{library}\hlstd{(numDeriv)}
\hlkwd{library}\hlstd{(alabama)}
\hlkwd{library}\hlstd{(Matrix)}
\end{alltt}
\end{kframe}
\end{knitrout}

Set random number generator seeds.  We only use randomness in tests.
This assures the tests always come out the same.
\begin{knitrout}
\definecolor{shadecolor}{rgb}{0.969, 0.969, 0.969}\color{fgcolor}\begin{kframe}
\begin{alltt}
\hlkwd{set.seed}\hlstd{(}\hlnum{42}\hlstd{)}
\end{alltt}
\end{kframe}
\end{knitrout}

Figure out some stuff about the machine (only works on Linux).
\begin{knitrout}
\definecolor{shadecolor}{rgb}{0.969, 0.969, 0.969}\color{fgcolor}\begin{kframe}
\begin{alltt}
\hlkwa{if} \hlstd{(}\hlkwd{Sys.info}\hlstd{()[}\hlstr{"sysname"}\hlstd{]} \hlopt{==} \hlstr{"Linux"}\hlstd{) \{}
    \hlstd{foo} \hlkwb{<-} \hlkwd{scan}\hlstd{(}\hlstr{"/proc/cpuinfo"}\hlstd{,} \hlkwc{what} \hlstd{=} \hlkwd{character}\hlstd{(}\hlnum{0}\hlstd{),} \hlkwc{sep} \hlstd{=} \hlstr{"\textbackslash{}n"}\hlstd{)}
    \hlstd{bar} \hlkwb{<-} \hlkwd{grep}\hlstd{(}\hlstr{"^model name"}\hlstd{, foo,} \hlkwc{value} \hlstd{=} \hlnum{TRUE}\hlstd{)}
    \hlstd{bar} \hlkwb{<-} \hlkwd{unique}\hlstd{(bar)}
    \hlstd{baz} \hlkwb{<-} \hlkwd{sub}\hlstd{(}\hlstr{"^model name\textbackslash{}\textbackslash{}t: "}\hlstd{,} \hlstr{""}\hlstd{, bar)}
    \hlkwd{cat}\hlstd{(}\hlstr{"computer name:"}\hlstd{,} \hlkwd{system}\hlstd{(}\hlstr{"hostname"}\hlstd{,} \hlkwc{intern} \hlstd{=} \hlnum{TRUE}\hlstd{),} \hlstr{"\textbackslash{}n"}\hlstd{)}
    \hlkwd{cat}\hlstd{(}\hlstr{"computer model:"}\hlstd{, baz,} \hlstr{"\textbackslash{}n"}\hlstd{)}
\hlstd{\}}
\end{alltt}
\end{kframe}
\end{knitrout}

Clean R global environment.
\begin{knitrout}
\definecolor{shadecolor}{rgb}{0.969, 0.969, 0.969}\color{fgcolor}\begin{kframe}
\begin{alltt}
\hlkwd{rm}\hlstd{(}\hlkwc{list} \hlstd{=} \hlkwd{ls}\hlstd{())}
\end{alltt}
\end{kframe}
\end{knitrout}

\section{Complete separation example of Agresti}
\label{sec:agresti-complete}

\subsection{Data}

\citet[Section~6.5.1]{agresti} introduces the notion of complete separation
with the following simple logistic regression example.
\begin{knitrout}
\definecolor{shadecolor}{rgb}{0.969, 0.969, 0.969}\color{fgcolor}\begin{kframe}
\begin{alltt}
\hlstd{x} \hlkwb{<-} \hlkwd{seq}\hlstd{(}\hlnum{10}\hlstd{,} \hlnum{90}\hlstd{,} \hlnum{10}\hlstd{)}
\hlstd{x} \hlkwb{<-} \hlstd{x[x} \hlopt{!=} \hlnum{50}\hlstd{]}
\hlstd{x}
\end{alltt}
\begin{verbatim}
## [1] 10 20 30 40 60 70 80 90
\end{verbatim}
\begin{alltt}
\hlstd{y} \hlkwb{<-} \hlkwd{as.numeric}\hlstd{(x} \hlopt{>} \hlnum{50}\hlstd{)}
\hlstd{y}
\end{alltt}
\begin{verbatim}
## [1] 0 0 0 0 1 1 1 1
\end{verbatim}
\end{kframe}
\end{knitrout}

These data are included in the \texttt{glmdr} package.

\begin{knitrout}
\definecolor{shadecolor}{rgb}{0.969, 0.969, 0.969}\color{fgcolor}\begin{kframe}
\begin{alltt}
\hlkwd{data}\hlstd{(complete)}
\hlkwd{all.equal}\hlstd{(complete,} \hlkwd{as.data.frame}\hlstd{(}\hlkwd{cbind}\hlstd{(x,y)))}
\end{alltt}
\begin{verbatim}
## [1] TRUE
\end{verbatim}
\end{kframe}
\end{knitrout}

\subsection{The MLE in the LCM}
\label{sec:MLEinLCM-i}

We fit these data using R function \texttt{glmdr} in the \texttt{glmdr} R 
package \citep{glmdr}. 

\begin{knitrout}
\definecolor{shadecolor}{rgb}{0.969, 0.969, 0.969}\color{fgcolor}\begin{kframe}
\begin{alltt}
\hlstd{gout} \hlkwb{<-} \hlkwd{glmdr}\hlstd{(y} \hlopt{~} \hlstd{x,} \hlkwc{family} \hlstd{=} \hlstr{"binomial"}\hlstd{,} \hlkwc{data} \hlstd{= complete)}
\hlkwd{summary}\hlstd{(gout)}
\end{alltt}
\begin{verbatim}
## 
## MLE exists in Barndorff-Nielsen completion 
## it is completely degenerate 
## the MLE says the response actually observed is the only 
## possible value that could ever be observed
\end{verbatim}
\end{kframe}
\end{knitrout}

In this example the LCM is completely degenerate and has no 
identifiable parameters.

\subsubsection{Linearity}
\label{sec:linearity}

The function \texttt{glmdr} determines which data points belong to the support 
of the LCM.  We already know that the support of the LCM is empty.

\begin{knitrout}
\definecolor{shadecolor}{rgb}{0.969, 0.969, 0.969}\color{fgcolor}\begin{kframe}
\begin{alltt}
\hlstd{gout}\hlopt{$}\hlstd{linearity}
\end{alltt}
\begin{verbatim}
## [1] FALSE FALSE FALSE FALSE FALSE FALSE FALSE FALSE
\end{verbatim}
\end{kframe}
\end{knitrout}

The support of the LCM (the linearity) are the data points with responses that 
are not conditioned to be their observed value.

\subsection{One-sided confidence intervals for mean value parameters}

We now provide one-sided confidence intervals for mean value parameters whose MLE is on the boundary.  We calculate these intervals using a new method not previously published, but whose concept is found in \citet{geyer-gdor} in the penultimate paragraph of Section~3.16.2 and further discussed in Sections 3.6.1--3.6.3 of \citet{infinity}.  Sections~\ref{sec:theory-logistic} contains a description of our method in the context of this example.  The R function \texttt{inference} in R package \texttt{glmdr} computes these one-sided confidence intervals for mean value parameters.

\begin{knitrout}
\definecolor{shadecolor}{rgb}{0.969, 0.969, 0.969}\color{fgcolor}\begin{kframe}
\begin{alltt}
\hlkwd{system.time}\hlstd{(mus.CI} \hlkwb{<-} \hlkwd{inference}\hlstd{(gout))}
\end{alltt}
\begin{verbatim}
##    user  system elapsed 
##   3.589   0.006   3.642
\end{verbatim}
\begin{alltt}
\hlstd{mus.CI}
\end{alltt}
\begin{verbatim}
##   intercept  x y     lower     upper
## 1         1 10 0 0.0000000 0.2852500
## 2         1 20 0 0.0000000 0.3940359
## 3         1 30 0 0.0000000 0.5708292
## 4         1 40 0 0.0000000 0.9499881
## 5         1 60 1 0.0500257 1.0000000
## 6         1 70 1 0.4291708 1.0000000
## 7         1 80 1 0.6059641 1.0000000
## 8         1 90 1 0.7147500 1.0000000
\end{verbatim}
\end{kframe}
\end{knitrout}

Note that for some components of the mean value parameter vector the lower or upper bound of our confidence interval is close to the quick and dirty limit (Section~\ref{sec:quick-and-dirty} below).  In particular, for $x = 40$ the upper bound is close to $0.95$ and for $x = 60$ the lower bound is close to $0.05$.  But for other components of the response vector there are much more restrictive bounds. We now make a plot of these intervals, the following code produces the right panel of Figure~\ref{fig:boundary}.

\begin{knitrout}
\definecolor{shadecolor}{rgb}{0.969, 0.969, 0.969}\color{fgcolor}\begin{kframe}
\begin{alltt}
\hlstd{bounds.lower.p} \hlkwb{<-} \hlstd{mus.CI}\hlopt{$}\hlstd{lower}
\hlstd{bounds.upper.p} \hlkwb{<-} \hlstd{mus.CI}\hlopt{$}\hlstd{upper}
\hlkwd{par}\hlstd{(}\hlkwc{mar} \hlstd{=} \hlkwd{c}\hlstd{(}\hlnum{4}\hlstd{,} \hlnum{4}\hlstd{,} \hlnum{0}\hlstd{,} \hlnum{0}\hlstd{)} \hlopt{+} \hlnum{0.1}\hlstd{)}
\hlkwd{plot}\hlstd{(x, y,} \hlkwc{axes} \hlstd{=} \hlnum{FALSE}\hlstd{,} \hlkwc{type} \hlstd{=} \hlstr{"n"}\hlstd{,}
    \hlkwc{xlab} \hlstd{=} \hlkwd{expression}\hlstd{(x),} \hlkwc{ylab} \hlstd{=} \hlkwd{expression}\hlstd{(}\hlkwd{mu}\hlstd{(x)))}
\hlkwd{segments}\hlstd{(x, bounds.lower.p, x, bounds.upper.p,} \hlkwc{lwd} \hlstd{=} \hlnum{2}\hlstd{)}
\hlkwd{box}\hlstd{()}
\hlkwd{axis}\hlstd{(}\hlkwc{side} \hlstd{=} \hlnum{1}\hlstd{)}
\hlkwd{axis}\hlstd{(}\hlkwc{side} \hlstd{=} \hlnum{2}\hlstd{)}
\hlkwd{points}\hlstd{(x, y,} \hlkwc{pch} \hlstd{=} \hlnum{21}\hlstd{,} \hlkwc{bg} \hlstd{=} \hlstr{"white"}\hlstd{)}
\end{alltt}
\end{kframe}
\end{knitrout}



\subsubsection{Theory for logistic regression}
\label{sec:theory-logistic}

The math of logistic regression is very tricky for the computer. Unless arranged very carefully, the computer may overflow or underflow causing loss of all significant figures. First there is the map from canonical to mean value parameters
$
   p = \logit^{- 1}(\theta)
$
where this inverse logit function operates componentwise
\begin{alignat*}{2}
   p_i & = \frac{e^{\theta_i}}{1 + e^{\theta_i}}
   & = \frac{1}{1 + e^{- \theta_i}}
   \\
   1 - p_i & = \frac{1}{1 + e^{\theta_i}}
   & = \frac{e^{- \theta_i}}{1 + e^{- \theta_i}}
\end{alignat*}
for all $i$. We should always choose one of these formulas for which we know we can have neither overflow, nor catastrophic cancellation.  We always calculate $1 - p_i$ using the second line, we never calculate $p_i$ and subtract from one because this results in catastrophic cancellation when $p_i$ is near one.  If $\theta_i$ is large positive, we choose a formula that has $e^{- \theta_i}$ in it, as that cannot overflow. If $\theta_i$ is large negative, we choose a formula that has $e^{\theta_i}$ in it, as that cannot overflow. If $\theta_i$ is not large, it doesn't matter which we choose.

We also never use the $\log$ function to take logarithms as this can cause horrible inaccuracy when the argument is near one. R has a function \texttt{log1p} that calculates $\log(1 + x)$ accurately for small values of $x$. Note that the map from canonical to mean value parameters gives 
\begin{alignat*}{2}
   \log(p_i) & = \theta_i - \log(1 + e^{\theta_i})
   & & = - \log(1 + e^{- \theta_i})
   \\
   \log(1 - p_i) & = - \log(1 + e^{\theta_i})
   & & = - \theta_i - \log(1 + e^{- \theta_i})
\end{alignat*}
so we calculate
\begin{alignat*}{2}
   \log(p_i) & = \theta_i - \logonep(e^{\theta_i})
   & & = - \logonep(e^{- \theta_i})
   \\
   \log(1 - p_i) & = - \logonep(e^{\theta_i})
   & & = - \theta_i - \logonep(e^{- \theta_i})
\end{alignat*}
With this care, we have a hope of getting approximately correct answers out of the computer. Thus the optimization problem in \eqref{eq:logistic} will be more computational stable written as \eqref{eq:logistic-2}.


Because optimizers expect to optimize over $\R^q$ for some $q$, let $N$ be a matrix whose columns are a basis for $\Gamma_\text{lim}$. In this example $\Gamma_\text{lim}$ is the whole parameter space so $N$ can be the identity matrix.  In other problems we take it to be a matrix whose columns are null eigenvectors of the Fisher information matrix. Then every $\gamma \in \Gamma_\text{lim}$ can be written as $\gamma = N \xi$ for some $\xi \in \R^q$, where $q$ is the column dimension of $N$ and the dimension of $\Gamma_\text{lim}$.

To an optimizer (the \texttt{inference} function in the \texttt{glmdr} package will use the R function \texttt{auglag} in CRAN package \texttt{alabama}) problem \eqref{eq:logistic-2} has the abstract form 
\begin{equation} \label{eq:abstract}
\begin{split}
   \text{minimize} & \quad f(\xi)
   \\
   \text{subject to} & \quad g(\xi) \ge 0
\end{split}
\end{equation}
and the optimization works better if derivatives of $f$ and $g$ are provided. Because R function \texttt{auglag} only does minimization, the objective function must be the negation of what we have in \eqref{eq:logistic-2}.  That is
\begin{align*}
   f(\xi) & = - \theta_k
   \\
   \frac{\partial f(\xi)}{\partial \xi_j}
   & =
   - o_{k j}
   \\
   g(\xi) & =
   \sum_{i \in I} \bigl[ y_i \log(p_i) + (n_i - y_i) \log(1 - p_i) \bigr]
   - \log(\alpha)
   \\
   \frac{\partial g(\xi)}{\partial \xi_j}
   & =
   \sum_{i \in I} (y_i - n_i  p_i) o_{i j}
\end{align*}
where $o_{i j}$ are the components of $O = M N$.

\subsubsection{Quick and dirty intervals}
\label{sec:quick-and-dirty}

As a sanity check and as a quick and dirty conservative (perhaps very
conservative) confidence interval, we note that since all the $p_i$ are
between zero and one we must have
\begin{alignat*}{2}
   p_k^{n_k} & \ge \alpha, & \qquad & y_k = n_k
   \\
   (1 - p_k)^{n_k} & \ge \alpha, & & y_k = 0
\end{alignat*}
or
\begin{alignat*}{2}
   \alpha^{1 / n_k} \le p_k \le 1, & & \qquad & y_k = n_k
   \\
   0 \le p_k \le 1 - \alpha^{1 / n_k}, & & & y_k = 0
\end{alignat*}
For $\alpha = 0.05$ and $n_k = 1$ we have
\begin{align*}
   \alpha^{1 / n_k} & = 0.05
   \\
   1 - \alpha^{1 / n_k} & = 0.95
\end{align*}
In this example,
no upper bound for a one-sided 95\% confidence interval for the
mean value parameter for a cell for which the MLE in the LCM is zero
can be larger than than 0.95 and no lower bound for the analogous
confidence interval for which the MLE in the LCM is one can be
smaller than $0.05$.

\subsection{Support of the submodel canonical statistic}

In this section we duplicate Figure~2 of \citep{infinity}, which is Figure~\ref{fig:boundary} in the main text. The methods of this section take computer time proportional to the size of the sample space.  Hence they can only be used on toy problems and are useless for practical applications.  They do help in understanding the Barndorff-Nielsen completion.

For GLM the (submodel) canonical statistic is $M^T Y$, where $M$ is the model matrix and $y$ is the response vector \citep[Section~3.9]{geyer-gdor}. There are $2^n$ possible values where $n$ is the dimension of the response vector because each component of $y$ can be either zero or one.  The following code makes all of those vectors.

\begin{knitrout}
\definecolor{shadecolor}{rgb}{0.969, 0.969, 0.969}\color{fgcolor}\begin{kframe}
\begin{alltt}
\hlstd{yy} \hlkwb{<-} \hlkwa{NULL}
\hlstd{n} \hlkwb{<-} \hlkwd{length}\hlstd{(y)}
\hlkwa{for} \hlstd{(i} \hlkwa{in} \hlnum{1}\hlopt{:}\hlstd{n) \{}
    \hlstd{j} \hlkwb{<-} \hlnum{2}\hlopt{^}\hlstd{(i} \hlopt{-} \hlnum{1}\hlstd{)}
    \hlstd{k} \hlkwb{<-} \hlnum{2}\hlopt{^}\hlstd{n} \hlopt{/} \hlstd{j} \hlopt{/} \hlnum{2}
    \hlstd{yy} \hlkwb{<-} \hlkwd{cbind}\hlstd{(}\hlkwd{rep}\hlstd{(}\hlkwd{rep}\hlstd{(}\hlnum{0}\hlopt{:}\hlnum{1}\hlstd{,} \hlkwc{each} \hlstd{= j),} \hlkwc{times} \hlstd{= k), yy)}
\hlstd{\}}
\end{alltt}
\end{kframe}
\end{knitrout}

But there are not so many distinct values of the submodel canonical statistic.
\begin{knitrout}
\definecolor{shadecolor}{rgb}{0.969, 0.969, 0.969}\color{fgcolor}\begin{kframe}
\begin{alltt}
\hlstd{m} \hlkwb{<-} \hlkwd{cbind}\hlstd{(}\hlnum{1}\hlstd{, x)}
\hlstd{mtyy} \hlkwb{<-} \hlkwd{t}\hlstd{(m)} \hlopt{%*%} \hlkwd{t}\hlstd{(yy)}
\hlstd{t1} \hlkwb{<-} \hlstd{mtyy[}\hlnum{1}\hlstd{, ]}
\hlstd{t2} \hlkwb{<-} \hlstd{mtyy[}\hlnum{2}\hlstd{, ]}
\hlstd{t1.obs} \hlkwb{<-} \hlkwd{sum}\hlstd{(y)}
\hlstd{t2.obs} \hlkwb{<-} \hlkwd{sum}\hlstd{(x} \hlopt{*} \hlstd{y)}
\end{alltt}
\end{kframe}
\end{knitrout}



The left panel of Figure~\ref{fig:boundary} shows these possible values of the submodel canonical statistic.

\subsection{Linearity by computational geometry}
\label{sec:linearity-example-i-rcdd}

For comparison of computer times and to see that our new methods give
correct results, we redo some of our analysis above using the methods
of \citep{geyer-gdor}.  In this section we find the linearity of the
tangent cone \citep[Sections~3.6 through~3.12]{geyer-gdor}.

The computer code in this section can be found in a technical report
\citep[Section~3.12]{geyer-tech}
cited in \citep{geyer-gdor} and also in the lecture notes \citep{infinity}.

\begin{knitrout}
\definecolor{shadecolor}{rgb}{0.969, 0.969, 0.969}\color{fgcolor}\begin{kframe}
\begin{alltt}
\hlcom{## calling glm to: }
\hlcom{## 1) get model matrix and }
\hlcom{## 2) illustrate that it outputs a warning message when fit to this data}
\hlstd{out} \hlkwb{<-} \hlkwd{glm}\hlstd{(y} \hlopt{~} \hlstd{x,} \hlkwc{family} \hlstd{=} \hlstr{"binomial"}\hlstd{,} \hlkwc{data} \hlstd{= complete,} \hlkwc{x} \hlstd{=} \hlnum{TRUE}\hlstd{)}
\end{alltt}

{\ttfamily\noindent\color{warningcolor}{\#\# Warning: glm.fit: fitted probabilities numerically 0 or 1 occurred}}\begin{alltt}
\hlstd{tanv} \hlkwb{<-} \hlstd{modmat} \hlkwb{<-} \hlstd{out}\hlopt{$}\hlstd{x}
\hlstd{tanv[y} \hlopt{==} \hlnum{1}\hlstd{, ]} \hlkwb{<-} \hlstd{(}\hlopt{-}\hlstd{tanv[y} \hlopt{==} \hlnum{1}\hlstd{, ])}
\hlstd{vrep} \hlkwb{<-} \hlkwd{makeV}\hlstd{(}\hlkwc{rays} \hlstd{= tanv)}
\hlkwd{system.time}\hlstd{(lout} \hlkwb{<-} \hlkwd{linearity}\hlstd{(}\hlkwd{d2q}\hlstd{(vrep),} \hlkwc{rep} \hlstd{=} \hlstr{"V"}\hlstd{))}
\end{alltt}
\begin{verbatim}
##    user  system elapsed 
##   0.012   0.000   0.012
\end{verbatim}
\begin{alltt}
\hlstd{lout}
\end{alltt}
\begin{verbatim}
## integer(0)
\end{verbatim}
\end{kframe}
\end{knitrout}

R object \texttt{lout} is the set of indices of components of the
response vector that do not have a degenerate distribution in the LCM.
In this example it has length zero indicating that the LCM is completely
degenerate.  This agrees with our analysis in
Section~\ref{sec:MLEinLCM-i} above.

Unlike the analysis using our new methods in
Section~\ref{sec:MLEinLCM-i} above,
the analysis in this section using R package \texttt{rcdd} is guaranteed
to be correct --- as valid as any mathematical proof --- because
the functions in that package can use infinite precision rational arithmetic
(R function \texttt{linearity} is doing so in the code chunk above).

Although the analysis in this section takes a trivial amount of computer
time on this toy problem, it does not scale.  It takes days of computer
time on the example in Section~\ref{sec:big-data} below.  Our new methods
do scale.

\subsection{Generic direction of recession}
\label{sec:gdor-example-i-rcdd}

The main theoretical tool of \citet{geyer-gdor} is the notion of
a \emph{generic direction of recession} (GDOR)
\citep[Sections~3.3 through~3.13]{geyer-gdor}.
But our new methods of calculation do not need to refer to it.
(We only need to get the correct linearity using eigenvalues and
eigenvectors of the Fisher information matrix.)

The code chunk below comes from the technical report
\citep[Section~4.1]{geyer-tech} and also from the lecture notes
\citep{infinity}.
\begin{knitrout}
\definecolor{shadecolor}{rgb}{0.969, 0.969, 0.969}\color{fgcolor}\begin{kframe}
\begin{alltt}
\hlstd{p} \hlkwb{<-} \hlkwd{ncol}\hlstd{(tanv)}
\hlstd{hrep} \hlkwb{<-} \hlkwd{cbind}\hlstd{(}\hlnum{0}\hlstd{,} \hlnum{0}\hlstd{,} \hlopt{-}\hlstd{tanv,} \hlopt{-}\hlnum{1}\hlstd{)}
\hlstd{hrep} \hlkwb{<-} \hlkwd{rbind}\hlstd{(hrep,} \hlkwd{c}\hlstd{(}\hlnum{0}\hlstd{,} \hlnum{1}\hlstd{,} \hlkwd{rep}\hlstd{(}\hlnum{0}\hlstd{, p),} \hlopt{-}\hlnum{1}\hlstd{))}
\hlstd{objv} \hlkwb{<-} \hlkwd{c}\hlstd{(}\hlkwd{rep}\hlstd{(}\hlnum{0}\hlstd{, p),} \hlnum{1}\hlstd{)}
\hlstd{pout} \hlkwb{<-} \hlkwd{lpcdd}\hlstd{(}\hlkwd{d2q}\hlstd{(hrep),} \hlkwd{d2q}\hlstd{(objv),} \hlkwc{minimize} \hlstd{=} \hlnum{FALSE}\hlstd{)}
\hlkwd{names}\hlstd{(pout)}
\end{alltt}
\begin{verbatim}
## [1] "solution.type"   "primal.solution" "dual.solution"  
## [4] "optimal.value"
\end{verbatim}
\begin{alltt}
\hlstd{pout}\hlopt{$}\hlstd{solution.type}
\end{alltt}
\begin{verbatim}
## [1] "Optimal"
\end{verbatim}
\begin{alltt}
\hlstd{gdor} \hlkwb{<-} \hlstd{pout}\hlopt{$}\hlstd{primal.solution[}\hlnum{1}\hlopt{:}\hlstd{p]}
\hlstd{gdor}
\end{alltt}
\begin{verbatim}
## [1] "-5"   "1/10"
\end{verbatim}
\begin{alltt}
\hlstd{pout}\hlopt{$}\hlstd{optimal.value}
\end{alltt}
\begin{verbatim}
## [1] "1"
\end{verbatim}
\end{kframe}
\end{knitrout}
The code chunk above is not general.  It assumes the linearity is trivial,
as in the particular example we are working on.
Other examples below will have more general code.
This agrees with the calculation in \citep[Section~3.3]{infinity}.

The fact that a GDOR exists shows that our calculation of the linearity
was correct (no matter how it was done).  That a GDOR exists is shown
by \verb@pout$solution.type@ being \texttt{"Optimal"}
by \verb@pout$optimal.value@ being strictly positive.

Clean R global environment.
\begin{knitrout}
\definecolor{shadecolor}{rgb}{0.969, 0.969, 0.969}\color{fgcolor}\begin{kframe}
\begin{alltt}
\hlkwd{rm}\hlstd{(}\hlkwc{list} \hlstd{=} \hlkwd{ls}\hlstd{())}
\end{alltt}
\end{kframe}
\end{knitrout}

\section{Complete separation example of Geyer}
\label{sec:geyer-complete}

This is the example in Section~2.2 of \citep{geyer-gdor}.  Its behavior
is very similar to that of the preceding example.  The only difference
is that this does quadratic logistic regression instead of linear logistic
regression.

\subsection{Data}

Data
\begin{knitrout}
\definecolor{shadecolor}{rgb}{0.969, 0.969, 0.969}\color{fgcolor}\begin{kframe}
\begin{alltt}
\hlstd{x} \hlkwb{<-} \hlnum{1}\hlopt{:}\hlnum{30}
\hlstd{y} \hlkwb{<-} \hlkwd{c}\hlstd{(}\hlkwd{rep}\hlstd{(}\hlnum{0}\hlstd{,} \hlnum{12}\hlstd{),} \hlkwd{rep}\hlstd{(}\hlnum{1}\hlstd{,} \hlnum{11}\hlstd{),} \hlkwd{rep}\hlstd{(}\hlnum{0}\hlstd{,} \hlnum{7}\hlstd{))}
\end{alltt}
\end{kframe}
\end{knitrout}

These data are included in the \texttt{glmdr} package.

\begin{knitrout}
\definecolor{shadecolor}{rgb}{0.969, 0.969, 0.969}\color{fgcolor}\begin{kframe}
\begin{alltt}
\hlkwd{data}\hlstd{(quadratic)}
\hlkwd{all.equal}\hlstd{(quadratic,} \hlkwd{as.data.frame}\hlstd{(}\hlkwd{cbind}\hlstd{(x,y)))}
\end{alltt}
\begin{verbatim}
## [1] TRUE
\end{verbatim}
\end{kframe}
\end{knitrout}

\subsection{The MLE in the LCM}
\label{sec:MLEinLCM-iii}

The LCM is completely degenerate and has no identifiable parameters.
We fit these data using R function \texttt{glmdr}.

\begin{knitrout}
\definecolor{shadecolor}{rgb}{0.969, 0.969, 0.969}\color{fgcolor}\begin{kframe}
\begin{alltt}
\hlstd{gout} \hlkwb{<-} \hlkwd{glmdr}\hlstd{(y} \hlopt{~} \hlstd{x} \hlopt{+} \hlkwd{I}\hlstd{(x}\hlopt{^}\hlnum{2}\hlstd{),} \hlkwc{family} \hlstd{=} \hlstr{"binomial"}\hlstd{,} \hlkwc{data} \hlstd{= quadratic)}
\hlkwd{summary}\hlstd{(gout)}
\end{alltt}
\begin{verbatim}
## 
## MLE exists in Barndorff-Nielsen completion 
## it is completely degenerate 
## the MLE says the response actually observed is the only 
## possible value that could ever be observed
\end{verbatim}
\end{kframe}
\end{knitrout}

\subsection{One-sided confidence intervals for mean value parameters}

We now provide one-sided confidence intervals for mean value parameters 
whose MLE is on the boundary.  

\begin{knitrout}
\definecolor{shadecolor}{rgb}{0.969, 0.969, 0.969}\color{fgcolor}\begin{kframe}
\begin{alltt}
\hlstd{mus.CI} \hlkwb{<-} \hlkwd{inference}\hlstd{(gout)}
\hlstd{mus.CI}
\end{alltt}
\begin{verbatim}
##    intercept  x I.x.2. y      lower        upper
## 1          1  1      1 0 0.00000000 6.563500e-12
## 2          1  2      4 0 0.00000000 1.915859e-10
## 3          1  3      9 0 0.00000000 4.570606e-09
## 4          1  4     16 0 0.00000000 8.919127e-08
## 5          1  5     25 0 0.00000000 1.425473e-06
## 6          1  6     36 0 0.00000000 1.869697e-05
## 7          1  7     49 0 0.00000000 2.019500e-04
## 8          1  8     64 0 0.00000000 1.806200e-03
## 9          1  9     81 0 0.00000000 1.345421e-02
## 10         1 10    100 0 0.00000000 8.242264e-02
## 11         1 11    121 0 0.00000000 3.741233e-01
## 12         1 12    144 0 0.00000000 9.481161e-01
## 13         1 13    169 1 0.05330860 1.000000e+00
## 14         1 14    196 1 0.65501216 1.000000e+00
## 15         1 15    225 1 0.86723176 1.000000e+00
## 16         1 16    256 1 0.92920573 1.000000e+00
## 17         1 17    289 1 0.95066373 1.000000e+00
## 18         1 18    324 1 0.95616871 1.000000e+00
## 19         1 19    361 1 0.95066368 1.000000e+00
## 20         1 20    400 1 0.92920575 1.000000e+00
## 21         1 21    441 1 0.86723190 1.000000e+00
## 22         1 22    484 1 0.65501207 1.000000e+00
## 23         1 23    529 1 0.05350799 1.000000e+00
## 24         1 24    576 0 0.00000000 9.479931e-01
## 25         1 25    625 0 0.00000000 3.741229e-01
## 26         1 26    676 0 0.00000000 8.242262e-02
## 27         1 27    729 0 0.00000000 1.345423e-02
## 28         1 28    784 0 0.00000000 1.806203e-03
## 29         1 29    841 0 0.00000000 2.019507e-04
## 30         1 30    900 0 0.00000000 1.869705e-05
\end{verbatim}
\end{kframe}
\end{knitrout}

Note that for some cells of the mean value parameter vector the lower or upper bound of
our confidence interval is close to the quick and dirty limit. (Section~\ref{sec:quick-and-dirty} above). In particular, for $x = 12$ and $x = 24$ the upper bound is close to 0.95 and for $x = 13$ and $x = 23$ the lower bound is close to $0.05$. But for other components of the response vector there are much more restrictive bounds. We now make a plot of these intervals.
\begin{knitrout}
\definecolor{shadecolor}{rgb}{0.969, 0.969, 0.969}\color{fgcolor}\begin{kframe}
\begin{alltt}
\hlstd{bounds.lower.p} \hlkwb{<-} \hlstd{mus.CI}\hlopt{$}\hlstd{lower}
\hlstd{bounds.upper.p} \hlkwb{<-} \hlstd{mus.CI}\hlopt{$}\hlstd{upper}
\hlkwd{par}\hlstd{(}\hlkwc{mar} \hlstd{=} \hlkwd{c}\hlstd{(}\hlnum{4}\hlstd{,} \hlnum{4}\hlstd{,} \hlnum{0}\hlstd{,} \hlnum{0}\hlstd{)} \hlopt{+} \hlnum{0.1}\hlstd{)}
\hlkwd{plot}\hlstd{(x, y,} \hlkwc{axes} \hlstd{=} \hlnum{FALSE}\hlstd{,} \hlkwc{type} \hlstd{=} \hlstr{"n"}\hlstd{,}
    \hlkwc{xlab} \hlstd{=} \hlkwd{expression}\hlstd{(x),} \hlkwc{ylab} \hlstd{=} \hlkwd{expression}\hlstd{(}\hlkwd{mu}\hlstd{(x)))}
\hlkwd{segments}\hlstd{(x, bounds.lower.p, x, bounds.upper.p,} \hlkwc{lwd} \hlstd{=} \hlnum{2}\hlstd{)}
\hlkwd{box}\hlstd{()}
\hlkwd{axis}\hlstd{(}\hlkwc{side} \hlstd{=} \hlnum{1}\hlstd{)}
\hlkwd{axis}\hlstd{(}\hlkwc{side} \hlstd{=} \hlnum{2}\hlstd{)}
\hlkwd{points}\hlstd{(x, y,} \hlkwc{pch} \hlstd{=} \hlnum{21}\hlstd{,} \hlkwc{bg} \hlstd{=} \hlstr{"white"}\hlstd{)}
\end{alltt}
\end{kframe}
\end{knitrout}

Our Figure~\ref{fig:three} agrees with Figure~2 in \citep{geyer-gdor}, which was done by methods that are much more messy and made obsolete by the methods presented here.
\begin{figure}
\begin{center}
\begin{knitrout}
\definecolor{shadecolor}{rgb}{0.969, 0.969, 0.969}\color{fgcolor}
\includegraphics[width = 0.75\textwidth]{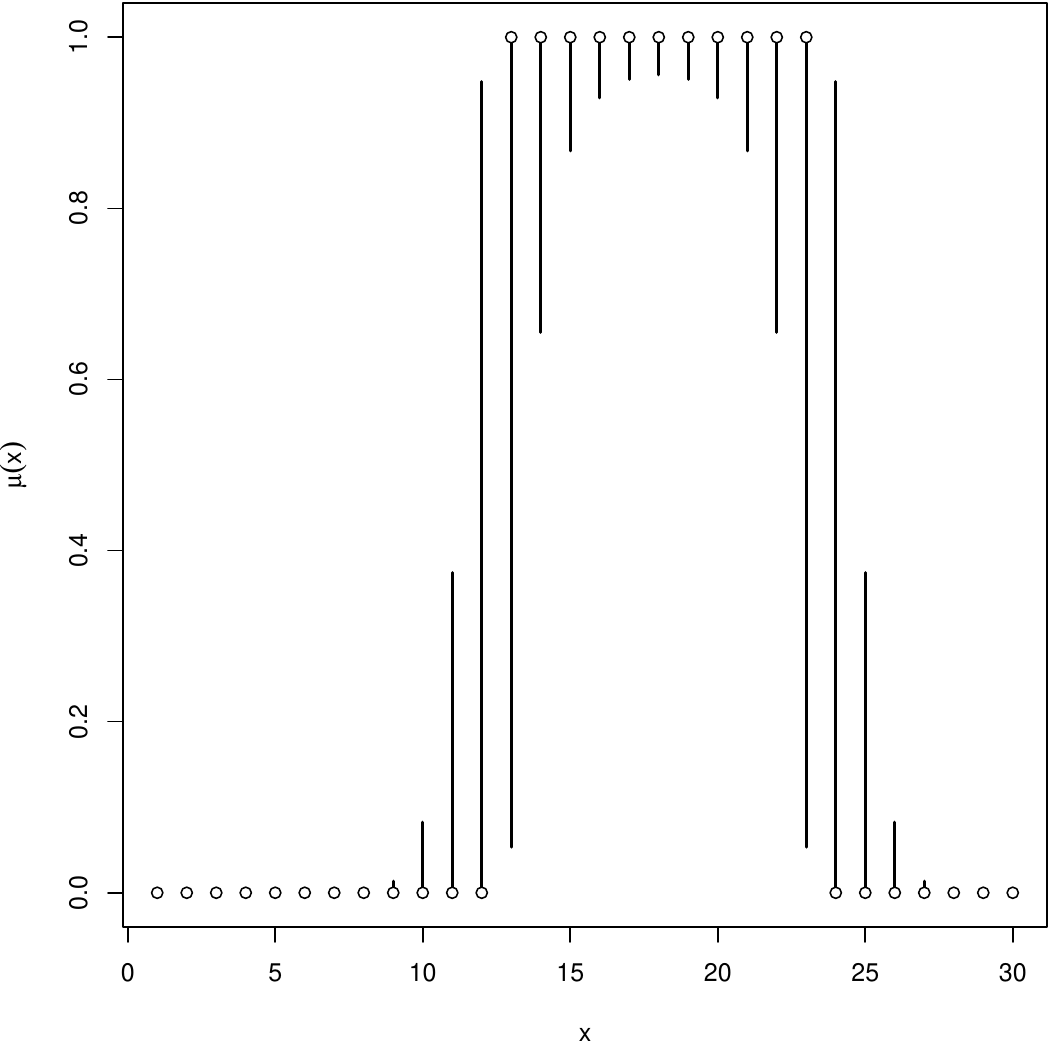} 

\end{knitrout}
\end{center}
\caption{One-sided 95\% confidence intervals for mean
  value parameters.  Bars are the intervals.  Vertical axis is the probability
  of observing response value one when the predictor value is $x$.
  Solid dots are the observed data.}
\label{fig:three}
\end{figure}

\subsection{Linearity by computational geometry}

Calculate linearity using R package \texttt{rcdd} like
in Section~\ref{sec:linearity-example-i-rcdd} above.
\begin{knitrout}
\definecolor{shadecolor}{rgb}{0.969, 0.969, 0.969}\color{fgcolor}\begin{kframe}
\begin{alltt}
\hlcom{## calling glm to: }
\hlcom{## 1) get model matrix and }
\hlcom{## 2) illustrate that it outputs a warning message when fit to this data}
\hlstd{out} \hlkwb{<-} \hlkwd{glm}\hlstd{(y} \hlopt{~} \hlstd{x} \hlopt{+} \hlkwd{I}\hlstd{(x}\hlopt{^}\hlnum{2}\hlstd{),} \hlkwc{family} \hlstd{=} \hlstr{"binomial"}\hlstd{,}
  \hlkwc{data} \hlstd{= quadratic,} \hlkwc{x} \hlstd{=} \hlnum{TRUE}\hlstd{)}
\end{alltt}

{\ttfamily\noindent\color{warningcolor}{\#\# Warning: glm.fit: algorithm did not converge}}

{\ttfamily\noindent\color{warningcolor}{\#\# Warning: glm.fit: fitted probabilities numerically 0 or 1 occurred}}\begin{alltt}
\hlstd{tanv} \hlkwb{<-} \hlstd{modmat} \hlkwb{<-} \hlstd{out}\hlopt{$}\hlstd{x}
\hlstd{tanv[y} \hlopt{==} \hlnum{1}\hlstd{, ]} \hlkwb{<-} \hlstd{(}\hlopt{-}\hlstd{tanv[y} \hlopt{==} \hlnum{1}\hlstd{, ])}
\hlstd{vrep} \hlkwb{<-} \hlkwd{makeV}\hlstd{(}\hlkwc{rays} \hlstd{= tanv)}
\hlstd{lout} \hlkwb{<-} \hlkwd{linearity}\hlstd{(}\hlkwd{d2q}\hlstd{(vrep),} \hlkwc{rep} \hlstd{=} \hlstr{"V"}\hlstd{)}
\hlstd{lout}
\end{alltt}
\begin{verbatim}
## integer(0)
\end{verbatim}
\end{kframe}
\end{knitrout}

So this agrees with our analysis in Section~\ref{sec:MLEinLCM-iii} above.

\subsection{Generic direction of recession}

Calculate a GDOR using R package \texttt{rcdd} like
in Section~\ref{sec:gdor-example-i-rcdd} above.
\begin{knitrout}
\definecolor{shadecolor}{rgb}{0.969, 0.969, 0.969}\color{fgcolor}\begin{kframe}
\begin{alltt}
\hlstd{p} \hlkwb{<-} \hlkwd{ncol}\hlstd{(tanv)}
\hlstd{hrep} \hlkwb{<-} \hlkwd{cbind}\hlstd{(}\hlnum{0}\hlstd{,} \hlnum{0}\hlstd{,} \hlopt{-}\hlstd{tanv,} \hlopt{-}\hlnum{1}\hlstd{)}
\hlstd{hrep} \hlkwb{<-} \hlkwd{rbind}\hlstd{(hrep,} \hlkwd{c}\hlstd{(}\hlnum{0}\hlstd{,} \hlnum{1}\hlstd{,} \hlkwd{rep}\hlstd{(}\hlnum{0}\hlstd{, p),} \hlopt{-}\hlnum{1}\hlstd{))}
\hlstd{objv} \hlkwb{<-} \hlkwd{c}\hlstd{(}\hlkwd{rep}\hlstd{(}\hlnum{0}\hlstd{, p),} \hlnum{1}\hlstd{)}
\hlstd{pout} \hlkwb{<-} \hlkwd{lpcdd}\hlstd{(}\hlkwd{d2q}\hlstd{(hrep),} \hlkwd{d2q}\hlstd{(objv),} \hlkwc{minimize} \hlstd{=} \hlnum{FALSE}\hlstd{)}
\hlkwd{names}\hlstd{(pout)}
\end{alltt}
\begin{verbatim}
## [1] "solution.type"   "primal.solution" "dual.solution"  
## [4] "optimal.value"
\end{verbatim}
\begin{alltt}
\hlstd{pout}\hlopt{$}\hlstd{solution.type}
\end{alltt}
\begin{verbatim}
## [1] "Optimal"
\end{verbatim}
\begin{alltt}
\hlstd{gdor} \hlkwb{<-} \hlstd{pout}\hlopt{$}\hlstd{primal.solution[}\hlnum{1}\hlopt{:}\hlstd{p]}
\hlstd{gdor}
\end{alltt}
\begin{verbatim}
## [1] "-587/11" "72/11"   "-2/11"
\end{verbatim}
\begin{alltt}
\hlstd{pout}\hlopt{$}\hlstd{optimal.value}
\end{alltt}
\begin{verbatim}
## [1] "1"
\end{verbatim}
\end{kframe}
\end{knitrout}
This agrees with the GDOR found in the technical report \citep[Section~4.1]{geyer-tech} that is supplementary material for \citep{geyer-gdor}. We clean R global environment. 
\begin{knitrout}
\definecolor{shadecolor}{rgb}{0.969, 0.969, 0.969}\color{fgcolor}\begin{kframe}
\begin{alltt}
\hlkwd{rm}\hlstd{(}\hlkwc{list} \hlstd{=} \hlkwd{ls}\hlstd{())}
\end{alltt}
\end{kframe}
\end{knitrout}

\section{Sports standings example of Geyer}
\label{sec:geyer-sports}

This is the example in Section~2.4 of \citet{geyer-gdor}.  Its behavior
is different from any of the preceding examples, because the
LCM is not completely degenerate and also because the binomial sample
size is two for all components of the response vector.

\pagebreak[3]
\subsection{Data}

Data
\begin{knitrout}
\definecolor{shadecolor}{rgb}{0.969, 0.969, 0.969}\color{fgcolor}\begin{kframe}
\begin{alltt}
\hlstd{team.names} \hlkwb{<-} \hlkwd{c}\hlstd{(}\hlstr{"ants"}\hlstd{,} \hlstr{"beetles"}\hlstd{,} \hlstr{"cows"}\hlstd{,} \hlstr{"dogs"}\hlstd{,}
    \hlstr{"egrets"}\hlstd{,} \hlstr{"foxes"}\hlstd{,} \hlstr{"gerbils"}\hlstd{,} \hlstr{"hogs"}\hlstd{)}
\hlstd{data} \hlkwb{<-} \hlkwd{matrix}\hlstd{(}\hlkwd{c}\hlstd{(}\hlnum{NA}\hlstd{,} \hlnum{2}\hlstd{,} \hlnum{2}\hlstd{,} \hlnum{2}\hlstd{,} \hlnum{2}\hlstd{,} \hlnum{2}\hlstd{,} \hlnum{2}\hlstd{,} \hlnum{2}\hlstd{,} \hlnum{0}\hlstd{,} \hlnum{NA}\hlstd{,}
    \hlnum{1}\hlstd{,} \hlnum{2}\hlstd{,} \hlnum{2}\hlstd{,} \hlnum{2}\hlstd{,} \hlnum{2}\hlstd{,} \hlnum{2}\hlstd{,} \hlnum{0}\hlstd{,} \hlnum{1}\hlstd{,} \hlnum{NA}\hlstd{,} \hlnum{2}\hlstd{,} \hlnum{1}\hlstd{,} \hlnum{2}\hlstd{,} \hlnum{2}\hlstd{,} \hlnum{2}\hlstd{,} \hlnum{0}\hlstd{,}
    \hlnum{0}\hlstd{,} \hlnum{0}\hlstd{,} \hlnum{NA}\hlstd{,} \hlnum{1}\hlstd{,} \hlnum{1}\hlstd{,} \hlnum{2}\hlstd{,} \hlnum{2}\hlstd{,} \hlnum{0}\hlstd{,} \hlnum{0}\hlstd{,} \hlnum{1}\hlstd{,} \hlnum{1}\hlstd{,} \hlnum{NA}\hlstd{,} \hlnum{1}\hlstd{,} \hlnum{2}\hlstd{,} \hlnum{2}\hlstd{,}
    \hlnum{0}\hlstd{,} \hlnum{0}\hlstd{,} \hlnum{0}\hlstd{,} \hlnum{1}\hlstd{,} \hlnum{1}\hlstd{,} \hlnum{NA}\hlstd{,} \hlnum{2}\hlstd{,} \hlnum{2}\hlstd{,} \hlnum{0}\hlstd{,} \hlnum{0}\hlstd{,} \hlnum{0}\hlstd{,} \hlnum{0}\hlstd{,} \hlnum{0}\hlstd{,} \hlnum{0}\hlstd{,} \hlnum{NA}\hlstd{,}
    \hlnum{1}\hlstd{,} \hlnum{0}\hlstd{,} \hlnum{0}\hlstd{,} \hlnum{0}\hlstd{,} \hlnum{0}\hlstd{,} \hlnum{0}\hlstd{,} \hlnum{0}\hlstd{,} \hlnum{1}\hlstd{,} \hlnum{NA}\hlstd{),} \hlkwc{byrow} \hlstd{=} \hlnum{TRUE}\hlstd{,} \hlkwc{nrow} \hlstd{=} \hlnum{8}\hlstd{)}
\hlkwd{dimnames}\hlstd{(data)} \hlkwb{<-} \hlkwd{list}\hlstd{(team.names, team.names)}
\hlkwd{print}\hlstd{(data)}
\end{alltt}
\begin{verbatim}
##         ants beetles cows dogs egrets foxes gerbils hogs
## ants      NA       2    2    2      2     2       2    2
## beetles    0      NA    1    2      2     2       2    2
## cows       0       1   NA    2      1     2       2    2
## dogs       0       0    0   NA      1     1       2    2
## egrets     0       0    1    1     NA     1       2    2
## foxes      0       0    0    1      1    NA       2    2
## gerbils    0       0    0    0      0     0      NA    1
## hogs       0       0    0    0      0     0       1   NA
\end{verbatim}
\end{kframe}
\end{knitrout}

We model these data with Bradley-Terry model.  We code this differently
from the technical report \citep{geyer-tech} accompanying \cite{geyer-gdor}.

First we format the data the way R function \texttt{glm} likes
(in a data.frame).
\begin{knitrout}
\definecolor{shadecolor}{rgb}{0.969, 0.969, 0.969}\color{fgcolor}\begin{kframe}
\begin{alltt}
\hlstd{wins} \hlkwb{<-} \hlstd{data[}\hlkwd{upper.tri}\hlstd{(data)]}
\hlstd{team.plus} \hlkwb{<-} \hlkwd{row}\hlstd{(data)[}\hlkwd{upper.tri}\hlstd{(data)]}
\hlstd{team.minus} \hlkwb{<-} \hlkwd{col}\hlstd{(data)[}\hlkwd{upper.tri}\hlstd{(data)]}
\hlstd{modmat} \hlkwb{<-} \hlkwd{matrix}\hlstd{(}\hlnum{0}\hlstd{,} \hlkwd{length}\hlstd{(wins),} \hlkwd{nrow}\hlstd{(data))}
\hlkwa{for} \hlstd{(i} \hlkwa{in} \hlnum{1}\hlopt{:}\hlkwd{ncol}\hlstd{(modmat)) \{}
    \hlstd{modmat[team.plus} \hlopt{==} \hlstd{i, i]} \hlkwb{<-} \hlnum{1}
    \hlstd{modmat[team.minus} \hlopt{==} \hlstd{i, i]} \hlkwb{<-} \hlstd{(}\hlopt{-}\hlnum{1}\hlstd{)}
\hlstd{\}}
\hlstd{losses} \hlkwb{<-} \hlnum{2} \hlopt{-} \hlstd{wins}
\hlstd{resp} \hlkwb{<-} \hlkwd{cbind}\hlstd{(wins, losses)}

\hlkwd{colnames}\hlstd{(modmat)} \hlkwb{<-} \hlstd{team.names}
\hlstd{sportsdata} \hlkwb{<-} \hlkwd{cbind}\hlstd{(modmat, wins, losses)}
\hlstd{sportsdata} \hlkwb{<-} \hlkwd{as.data.frame}\hlstd{(sportsdata)}
\end{alltt}
\end{kframe}
\end{knitrout}

These data are included in the \texttt{glmdr} package.

\begin{knitrout}
\definecolor{shadecolor}{rgb}{0.969, 0.969, 0.969}\color{fgcolor}\begin{kframe}
\begin{alltt}
\hlkwd{data}\hlstd{(sports)}
\hlkwd{all.equal}\hlstd{(sports, sportsdata)}
\end{alltt}
\begin{verbatim}
## [1] TRUE
\end{verbatim}
\end{kframe}
\end{knitrout}

\subsection{Fitting the Model}
\label{sec:sports-glmdr}

We first fit the model using the R function \texttt{glmdr}.

\begin{knitrout}
\definecolor{shadecolor}{rgb}{0.969, 0.969, 0.969}\color{fgcolor}\begin{kframe}
\begin{alltt}
\hlstd{gout} \hlkwb{<-} \hlkwd{glmdr}\hlstd{(}\hlkwd{cbind}\hlstd{(wins, losses)} \hlopt{~} \hlnum{0} \hlopt{+} \hlstd{.,}
  \hlkwc{family} \hlstd{=} \hlstr{"binomial"}\hlstd{,} \hlkwc{data} \hlstd{= sports)}
\hlkwd{summary}\hlstd{(gout)}
\end{alltt}
\begin{verbatim}
## 
## MLE exists in Barndorff-Nielsen completion 
## it is conditional on components of the response 
## corresponding to object$linearity == FALSE being 
## conditioned on their observed values 
## 
## GLM summary for limiting conditional model
## 
## 
## Call:
## stats::glm(formula = cbind(wins, losses) ~ 0 + ., family = "binomial", 
##     data = sports, subset = c("3", "5", "6", "8", "9", "10", 
##     "12", "13", "14", "15", "28"), x = TRUE, y = TRUE)
## 
## Deviance Residuals: 
##     Min       1Q   Median       3Q      Max  
## -1.1692  -0.1970   0.3941   0.5038   0.6153  
## 
## Coefficients: (3 not defined because of singularities)
##           Estimate Std. Error z value Pr(>|z|)  
## ants            NA         NA      NA       NA  
## beetles  3.024e+00  1.487e+00   2.034   0.0419 *
## cows     2.310e+00  1.328e+00   1.740   0.0819 .
## dogs    -5.189e-17  1.080e+00   0.000   1.0000  
## egrets   5.609e-01  1.078e+00   0.520   0.6029  
## foxes           NA         NA      NA       NA  
## gerbils  0.000e+00  1.414e+00   0.000   1.0000  
## hogs            NA         NA      NA       NA  
## ---
## Signif. codes:  
## 0 '***' 0.001 '**' 0.01 '*' 0.05 '.' 0.1 ' ' 1
## 
## (Dispersion parameter for binomial family taken to be 1)
## 
##     Null deviance: 13.863  on 11  degrees of freedom
## Residual deviance:  3.391  on  6  degrees of freedom
## AIC: 21.709
## 
## Number of Fisher Scoring iterations: 5
\end{verbatim}
\end{kframe}
\end{knitrout}

\subsection{Linearity}
\label{sec:sports-linearity}

As explained in Section 6.3 of the main text, the components of the response vector that are random in the LCM are those for which the null space projected to canonical parameter space of the saturated model have corresponding zeros. These components are those for which the \texttt{linearity} of the object returned by R function \texttt{glmdr} is \texttt{true}
\begin{knitrout}
\definecolor{shadecolor}{rgb}{0.969, 0.969, 0.969}\color{fgcolor}\begin{kframe}
\begin{alltt}
\hlstd{gout}\hlopt{$}\hlstd{linearity}
\end{alltt}
\begin{verbatim}
##     1     2     3     4     5     6     7     8     9    10 
## FALSE FALSE  TRUE FALSE  TRUE  TRUE FALSE  TRUE  TRUE  TRUE 
##    11    12    13    14    15    16    17    18    19    20 
## FALSE  TRUE  TRUE  TRUE  TRUE FALSE FALSE FALSE FALSE FALSE 
##    21    22    23    24    25    26    27    28 
## FALSE FALSE FALSE FALSE FALSE FALSE FALSE  TRUE
\end{verbatim}
\end{kframe}
\end{knitrout}

\subsection{Labels}
\label{sec:sports-labels}

We now want to make some confidence intervals, but first we make some
short labels for components of the response vector.
\begin{knitrout}
\definecolor{shadecolor}{rgb}{0.969, 0.969, 0.969}\color{fgcolor}\begin{kframe}
\begin{alltt}
\hlstd{foo} \hlkwb{<-} \hlstd{sports[ ,} \hlopt{!} \hlstd{(}\hlkwd{colnames}\hlstd{(sports)} \hlopt{%in%} \hlkwd{c}\hlstd{(}\hlstr{"wins"}\hlstd{,} \hlstr{"losses"}\hlstd{))]}
\hlstd{teams} \hlkwb{<-} \hlkwd{colnames}\hlstd{(foo)}
\hlstd{winner} \hlkwb{<-} \hlkwd{apply}\hlstd{(foo} \hlopt{==} \hlnum{1}\hlstd{,} \hlnum{1}\hlstd{,} \hlkwa{function}\hlstd{(}\hlkwc{x}\hlstd{) teams[x])}
\hlstd{loser} \hlkwb{<-} \hlkwd{apply}\hlstd{(foo} \hlopt{== -}\hlnum{1}\hlstd{,} \hlnum{1}\hlstd{,} \hlkwa{function}\hlstd{(}\hlkwc{x}\hlstd{) teams[x])}
\hlstd{label} \hlkwb{<-} \hlkwd{paste}\hlstd{(winner,} \hlstr{"beat"}\hlstd{, loser)}
\hlkwd{head}\hlstd{(label)}
\end{alltt}
\begin{verbatim}
## [1] "ants beat beetles" "ants beat cows"   
## [3] "beetles beat cows" "ants beat dogs"   
## [5] "beetles beat dogs" "cows beat dogs"
\end{verbatim}
\end{kframe}
\end{knitrout}

\subsection{Confidence Intervals}
\label{sec:sports-confint}

We now want to fit confidence intervals.  These come in two kinds.
First, there are confidence intervals for means of components of the response
vector that are in the linearity.  These are the usual sort of confidence
intervals for GLM, based on asymptotics, and produced by the \texttt{glm}
method of the R generic function \texttt{predict}.
Second, there are confidence intervals for means of components of the response
vector that are not in the linearity.  These are non-asymptotic
intervals, described in Section~\ref{sec:one-sided},
and produced by R function \texttt{inference} in R package \texttt{glmdr}.
These latter intervals are necessarily one-sided because the MLE mean
value parameter estimates for these components of the response vector
are on the boundary of the range of possible values.

\subsubsection{Two-Sided Intervals}

We get estimated means and standard errors as follows.
\begin{knitrout}
\definecolor{shadecolor}{rgb}{0.969, 0.969, 0.969}\color{fgcolor}\begin{kframe}
\begin{alltt}
\hlstd{preds} \hlkwb{<-} \hlkwd{predict}\hlstd{(gout}\hlopt{$}\hlstd{lcm,} \hlkwc{type} \hlstd{=} \hlstr{"response"}\hlstd{,} \hlkwc{se.fit} \hlstd{=} \hlnum{TRUE}\hlstd{)}
\hlstd{preds.tab} \hlkwb{<-} \hlkwd{cbind}\hlstd{(preds}\hlopt{$}\hlstd{fit, preds}\hlopt{$}\hlstd{se.fit)}
\hlkwd{colnames}\hlstd{(preds.tab)} \hlkwb{<-} \hlkwd{c}\hlstd{(}\hlstr{"fit"}\hlstd{,} \hlstr{"se"}\hlstd{)}
\hlkwd{rownames}\hlstd{(preds.tab)} \hlkwb{<-} \hlstd{label[gout}\hlopt{$}\hlstd{linearity]}
\hlkwd{round}\hlstd{(preds.tab,} \hlnum{3}\hlstd{)}
\end{alltt}
\begin{verbatim}
##                       fit    se
## beetles beat cows   0.671 0.274
## beetles beat dogs   0.954 0.066
## cows beat dogs      0.910 0.109
## beetles beat egrets 0.921 0.103
## cows beat egrets    0.852 0.159
## dogs beat egrets    0.363 0.249
## beetles beat foxes  0.954 0.066
## cows beat foxes     0.910 0.109
## dogs beat foxes     0.500 0.270
## egrets beat foxes   0.637 0.249
## gerbils beat hogs   0.500 0.354
\end{verbatim}
\end{kframe}
\end{knitrout}

And turn this into 95\% confidence intervals as follows.
\begin{knitrout}
\definecolor{shadecolor}{rgb}{0.969, 0.969, 0.969}\color{fgcolor}\begin{kframe}
\begin{alltt}
\hlstd{ci.tab} \hlkwb{<-} \hlkwd{apply}\hlstd{(preds.tab,} \hlnum{1}\hlstd{,} \hlkwa{function}\hlstd{(}\hlkwc{x}\hlstd{) x[}\hlnum{1}\hlstd{]} \hlopt{+} \hlkwd{c}\hlstd{(}\hlopt{-}\hlnum{1}\hlstd{,}\hlnum{1}\hlstd{)} \hlopt{*} \hlkwd{qnorm}\hlstd{(}\hlnum{0.975}\hlstd{)} \hlopt{*} \hlstd{x[}\hlnum{2}\hlstd{])}
\hlstd{ci.tab} \hlkwb{<-} \hlkwd{t}\hlstd{(ci.tab)}
\hlkwd{colnames}\hlstd{(ci.tab)} \hlkwb{<-} \hlkwd{c}\hlstd{(}\hlstr{"lwr"}\hlstd{,} \hlstr{"upr"}\hlstd{)}
\hlkwd{round}\hlstd{(ci.tab,} \hlnum{3}\hlstd{)}
\end{alltt}
\begin{verbatim}
##                        lwr   upr
## beetles beat cows    0.134 1.208
## beetles beat dogs    0.825 1.082
## cows beat dogs       0.696 1.123
## beetles beat egrets  0.720 1.123
## cows beat egrets     0.541 1.163
## dogs beat egrets    -0.126 0.852
## beetles beat foxes   0.825 1.082
## cows beat foxes      0.696 1.123
## dogs beat foxes     -0.029 1.029
## egrets beat foxes    0.148 1.126
## gerbils beat hogs   -0.193 1.193
\end{verbatim}
\end{kframe}
\end{knitrout}

As always, there is no reason why Wald confidence intervals cannot go
outside the boundaries of the parameter space, as some of these intervals do.
As noted in the discussion of \citep{geyer-gdor}, the sample sizes here are
by no means ``large''.  The last confidence interval (gerbils versus hogs)
is based on exactly two games (these teams played two games and each won one,
no other games are relevant to this inference).  So for these data,
the confidence intervals produced in this section are of questionable validity.

\subsubsection{One-Sided Intervals}

We get one-sided intervals as follows.  These numbers agree with
Table~5 in \citep{geyer-gdor},
which was done by methods that are much more messy and made obsolete
by the methods presented here.
\begin{knitrout}
\definecolor{shadecolor}{rgb}{0.969, 0.969, 0.969}\color{fgcolor}\begin{kframe}
\begin{alltt}
\hlstd{ci.tab.too} \hlkwb{<-} \hlkwd{inference}\hlstd{(gout)}
\hlkwd{rownames}\hlstd{(ci.tab.too)} \hlkwb{<-} \hlstd{label[}\hlopt{!} \hlstd{gout}\hlopt{$}\hlstd{linearity]}
\hlkwd{round}\hlstd{(ci.tab.too,} \hlnum{3}\hlstd{)}
\end{alltt}
\begin{verbatim}
##                      lower upper
## ants beat beetles    0.893     2
## ants beat cows       1.245     2
## ants beat dogs       1.886     2
## ants beat egrets     1.809     2
## ants beat foxes      1.886     2
## ants beat gerbils    1.993     2
## beetles beat gerbils 1.970     2
## cows beat gerbils    1.940     2
## dogs beat gerbils    1.526     2
## egrets beat gerbils  1.699     2
## foxes beat gerbils   1.526     2
## ants beat hogs       1.993     2
## beetles beat hogs    1.970     2
## cows beat hogs       1.940     2
## dogs beat hogs       1.526     2
## egrets beat hogs     1.699     2
## foxes beat hogs      1.526     2
\end{verbatim}
\end{kframe}
\end{knitrout}

With $n = 2$ (each team plays each other team twice), quick and dirty
confidence intervals go from zero to
$$
   1 - \alpha^{1 / 2} = 0.7763932
$$
(when $\alpha = 0.05$) or from
$$
   \alpha^{1 / 2} = 0.2236068
$$
to one (again when $\alpha = 0.05$).
None of the careful intervals calculated above are anywhere near as
wide as the quick and dirty intervals.

\subsection{Linearity by computational geometry}

Calculate linearity using R package \texttt{rcdd} like
in Section~\ref{sec:linearity-example-i-rcdd} above.
We follow Section~5 of \citet{geyer-tech-too}, except that seems to
have some errors, which we correct here.
\begin{knitrout}
\definecolor{shadecolor}{rgb}{0.969, 0.969, 0.969}\color{fgcolor}\begin{kframe}
\begin{alltt}
\hlstd{tanv} \hlkwb{<-} \hlstd{modmat}
\hlstd{tanv[losses} \hlopt{==} \hlnum{0}\hlstd{, ]} \hlkwb{<-} \hlstd{(}\hlopt{-} \hlstd{tanv[losses} \hlopt{==} \hlnum{0}\hlstd{, ])}
\hlstd{vrep} \hlkwb{<-} \hlkwd{cbind}\hlstd{(}\hlnum{0}\hlstd{,} \hlnum{0}\hlstd{, tanv)}
\hlstd{vrep[wins} \hlopt{>} \hlnum{0} \hlopt{&} \hlstd{losses} \hlopt{>} \hlnum{0}\hlstd{,} \hlnum{1}\hlstd{]} \hlkwb{<-} \hlnum{1}
\hlstd{lout} \hlkwb{<-} \hlkwd{linearity}\hlstd{(}\hlkwd{d2q}\hlstd{(vrep),} \hlkwc{rep} \hlstd{=} \hlstr{"V"}\hlstd{)}
\end{alltt}
\end{kframe}
\end{knitrout}

This result only includes the additional components found to be in the
linearity (in addition to the ones already known).  So we have to add
the others to get the correct linearity.
\begin{knitrout}
\definecolor{shadecolor}{rgb}{0.969, 0.969, 0.969}\color{fgcolor}\begin{kframe}
\begin{alltt}
\hlstd{linearity.too} \hlkwb{<-} \hlkwd{seq}\hlstd{(}\hlkwc{along} \hlstd{= wins)} \hlopt{%in%} \hlstd{lout}
\hlstd{linearity.too[wins} \hlopt{>} \hlnum{0} \hlopt{&} \hlstd{losses} \hlopt{>} \hlnum{0}\hlstd{]} \hlkwb{<-} \hlnum{TRUE}
\hlkwd{identical}\hlstd{(}\hlkwd{as.vector}\hlstd{(gout}\hlopt{$}\hlstd{linearity), linearity.too)}
\end{alltt}
\begin{verbatim}
## [1] TRUE
\end{verbatim}
\end{kframe}
\end{knitrout}

So this agrees with our analysis in
Section~\ref{sec:sports-linearity} above.

\subsection{Generic direction of recession}

Calculate a GDOR using R package \texttt{rcdd} like
in Section~\ref{sec:gdor-example-i-rcdd} above.
More specifically, we follow Section~6 of \citep{geyer-tech},
so we necessarily agree with the GDOR given in Table~4 of \citep{geyer-gdor}.
\begin{knitrout}
\definecolor{shadecolor}{rgb}{0.969, 0.969, 0.969}\color{fgcolor}\begin{kframe}
\begin{alltt}
\hlstd{p} \hlkwb{<-} \hlkwd{ncol}\hlstd{(tanv)}
\hlstd{hrep} \hlkwb{<-} \hlkwd{cbind}\hlstd{(}\hlnum{0}\hlstd{,} \hlnum{0}\hlstd{,} \hlopt{-}\hlstd{tanv,} \hlnum{0}\hlstd{)}
\hlstd{hrep[}\hlopt{!} \hlstd{gout}\hlopt{$}\hlstd{linearity,} \hlkwd{ncol}\hlstd{(hrep)]} \hlkwb{<-} \hlstd{(}\hlopt{-}\hlnum{1}\hlstd{)}
\hlstd{hrep[gout}\hlopt{$}\hlstd{linearity,} \hlnum{1}\hlstd{]} \hlkwb{<-} \hlnum{1}
\hlstd{hrep} \hlkwb{<-} \hlkwd{rbind}\hlstd{(hrep,} \hlkwd{c}\hlstd{(}\hlnum{0}\hlstd{,} \hlnum{1}\hlstd{,} \hlkwd{rep}\hlstd{(}\hlnum{0}\hlstd{, p),} \hlopt{-}\hlnum{1}\hlstd{))}
\hlstd{objv} \hlkwb{<-} \hlkwd{c}\hlstd{(}\hlkwd{rep}\hlstd{(}\hlnum{0}\hlstd{, p),} \hlnum{1}\hlstd{)}
\hlstd{pout} \hlkwb{<-} \hlkwd{lpcdd}\hlstd{(hrep, objv,} \hlkwc{minimize} \hlstd{=} \hlnum{FALSE}\hlstd{)}
\hlstd{gdor} \hlkwb{<-} \hlstd{pout}\hlopt{$}\hlstd{primal.solution[}\hlnum{1}\hlopt{:}\hlstd{p]}
\hlkwd{names}\hlstd{(gdor)} \hlkwb{<-} \hlstd{team.names}
\hlkwd{print}\hlstd{(gdor)}
\end{alltt}
\begin{verbatim}
##    ants beetles    cows    dogs  egrets   foxes gerbils 
##       2       1       1       1       1       1       0 
##    hogs 
##       0
\end{verbatim}
\end{kframe}
\end{knitrout}

Clean R global environment.

\begin{knitrout}
\definecolor{shadecolor}{rgb}{0.969, 0.969, 0.969}\color{fgcolor}\begin{kframe}
\begin{alltt}
\hlkwd{rm}\hlstd{(}\hlkwc{list} \hlstd{=} \hlkwd{ls}\hlstd{())}
\end{alltt}
\end{kframe}
\end{knitrout}

\section{Quasi-complete separation example of Agresti}
\label{sec:agresti-quasi}

\subsection{Data}

\citet[Section~6.5.1]{agresti} introduces the notion of quasi-complete
separation
with the following example, which adds two data points to the data for
his other example (Section~\ref{sec:agresti-complete} above).
\begin{knitrout}
\definecolor{shadecolor}{rgb}{0.969, 0.969, 0.969}\color{fgcolor}\begin{kframe}
\begin{alltt}
\hlstd{x} \hlkwb{<-} \hlkwd{seq}\hlstd{(}\hlnum{10}\hlstd{,} \hlnum{90}\hlstd{,} \hlnum{10}\hlstd{)}
\hlstd{x} \hlkwb{<-} \hlstd{x[x} \hlopt{!=} \hlnum{50}\hlstd{]}
\hlstd{y} \hlkwb{<-} \hlkwd{as.numeric}\hlstd{(x} \hlopt{>} \hlnum{50}\hlstd{)}
\hlstd{x} \hlkwb{<-} \hlkwd{c}\hlstd{(x,} \hlnum{50}\hlstd{,} \hlnum{50}\hlstd{)}
\hlstd{y} \hlkwb{<-} \hlkwd{c}\hlstd{(y,} \hlnum{0}\hlstd{,} \hlnum{1}\hlstd{)}
\end{alltt}
\end{kframe}
\end{knitrout}

These data are included in the \texttt{glmdr} package.

\begin{knitrout}
\definecolor{shadecolor}{rgb}{0.969, 0.969, 0.969}\color{fgcolor}\begin{kframe}
\begin{alltt}
\hlkwd{data}\hlstd{(quasi)}
\hlkwd{all.equal}\hlstd{(quasi,} \hlkwd{data.frame}\hlstd{(x, y))}
\end{alltt}
\begin{verbatim}
## [1] TRUE
\end{verbatim}
\end{kframe}
\end{knitrout}

\subsection{Maximizing the OM likelihood}

Again, we fit these data using R function \texttt{glmdr}.

\begin{knitrout}
\definecolor{shadecolor}{rgb}{0.969, 0.969, 0.969}\color{fgcolor}\begin{kframe}
\begin{alltt}
\hlstd{gout} \hlkwb{<-} \hlkwd{glmdr}\hlstd{(y} \hlopt{~} \hlstd{x,} \hlkwc{family} \hlstd{=} \hlstr{"binomial"}\hlstd{,} \hlkwc{data} \hlstd{= quasi)}
\hlkwd{summary}\hlstd{(gout)}
\end{alltt}
\begin{verbatim}
## 
## MLE exists in Barndorff-Nielsen completion 
## it is conditional on components of the response 
## corresponding to object$linearity == FALSE being 
## conditioned on their observed values 
## 
## GLM summary for limiting conditional model
## 
## 
## Call:
## stats::glm(formula = y ~ x, family = "binomial", data = quasi, 
##     subset = c("9", "10"), x = TRUE, y = TRUE)
## 
## Deviance Residuals: 
##      9      10  
## -1.177   1.177  
## 
## Coefficients: (1 not defined because of singularities)
##              Estimate Std. Error z value Pr(>|z|)
## (Intercept) 4.710e-16  1.414e+00       0        1
## x                  NA         NA      NA       NA
## 
## (Dispersion parameter for binomial family taken to be 1)
## 
##     Null deviance: 2.7726  on 1  degrees of freedom
## Residual deviance: 2.7726  on 1  degrees of freedom
## AIC: 4.7726
## 
## Number of Fisher Scoring iterations: 2
\end{verbatim}
\end{kframe}
\end{knitrout}

\subsection{Linearity}

We extract the linearity from the \texttt{glmdr} function call.

\begin{knitrout}
\definecolor{shadecolor}{rgb}{0.969, 0.969, 0.969}\color{fgcolor}\begin{kframe}
\begin{alltt}
\hlstd{gout}\hlopt{$}\hlstd{linearity}
\end{alltt}
\begin{verbatim}
##     1     2     3     4     5     6     7     8     9    10 
## FALSE FALSE FALSE FALSE FALSE FALSE FALSE FALSE  TRUE  TRUE
\end{verbatim}
\end{kframe}
\end{knitrout}

\subsection{One-sided confidence intervals for mean value parameters}

We now provide one-sided confidence intervals for mean value parameters 
whose MLE is on the boundary.  

\begin{knitrout}
\definecolor{shadecolor}{rgb}{0.969, 0.969, 0.969}\color{fgcolor}\begin{kframe}
\begin{alltt}
\hlkwd{inference}\hlstd{(gout)}
\end{alltt}
\begin{verbatim}
##       lower      upper
## 1 0.0000000 0.07082447
## 2 0.0000000 0.14043775
## 3 0.0000000 0.27199887
## 4 0.0000000 0.51720648
## 5 0.4827935 1.00000000
## 6 0.7280012 1.00000000
## 7 0.8595623 1.00000000
## 8 0.9291755 1.00000000
\end{verbatim}
\end{kframe}
\end{knitrout}

Note that for some components of the mean value parameter vector
the lower or upper bound of
our confidence interval is not close to the quick and dirty limit
(Section~\ref{sec:quick-and-dirty} above) like they were in the case of 
complete separation.

\subsection{Two-sided confidence intervals for mean value parameters}

As in the preceding example, confidence intervals for means of components
of the response vector in the linearity are given by R generic function
\texttt{predict}.
\begin{knitrout}
\definecolor{shadecolor}{rgb}{0.969, 0.969, 0.969}\color{fgcolor}\begin{kframe}
\begin{alltt}
\hlstd{preds} \hlkwb{<-} \hlkwd{predict}\hlstd{(gout}\hlopt{$}\hlstd{lcm,} \hlkwc{type} \hlstd{=} \hlstr{"response"}\hlstd{,} \hlkwc{se.fit} \hlstd{=} \hlnum{TRUE}\hlstd{)}
\hlstd{preds.tab} \hlkwb{<-} \hlkwd{cbind}\hlstd{(preds}\hlopt{$}\hlstd{fit} \hlopt{-} \hlkwd{qnorm}\hlstd{(}\hlnum{0.975}\hlstd{)} \hlopt{*} \hlstd{preds}\hlopt{$}\hlstd{se.fit,}
    \hlstd{preds}\hlopt{$}\hlstd{fit} \hlopt{+} \hlkwd{qnorm}\hlstd{(}\hlnum{0.975}\hlstd{)} \hlopt{*} \hlstd{preds}\hlopt{$}\hlstd{se.fit)}
\hlkwd{colnames}\hlstd{(preds.tab)} \hlkwb{<-} \hlkwd{c}\hlstd{(}\hlstr{"lower"}\hlstd{,} \hlstr{"upper"}\hlstd{)}
\hlkwd{round}\hlstd{(preds.tab,} \hlnum{3}\hlstd{)}
\end{alltt}
\begin{verbatim}
##     lower upper
## 9  -0.193 1.193
## 10 -0.193 1.193
\end{verbatim}
\end{kframe}
\end{knitrout}

As we saw with the sports data, these asymptotic confidence intervals
are not good for toy data.  Again we effectively have $n = 2$ for these
intervals, so they are exactly the same as the one for gerbils versus
hogs in the sports data.

Clean R global environment.
\begin{knitrout}
\definecolor{shadecolor}{rgb}{0.969, 0.969, 0.969}\color{fgcolor}\begin{kframe}
\begin{alltt}
\hlkwd{rm}\hlstd{(}\hlkwc{list} \hlstd{=} \hlkwd{ls}\hlstd{())}
\end{alltt}
\end{kframe}
\end{knitrout}

\section{Categorical data analysis example of Geyer}
\label{sec:geyer-categorical}

\subsection{Data}

This is the example in Section~2.3 of \citet{geyer-gdor}.  Its behavior
is very similar to the quasi-complete separation example
of \citeauthor{agresti} in Section~\ref{sec:agresti-quasi} above.
\begin{knitrout}
\definecolor{shadecolor}{rgb}{0.969, 0.969, 0.969}\color{fgcolor}\begin{kframe}
\begin{alltt}
\hlstd{foo} \hlkwb{<-} \hlstr{"https://conservancy.umn.edu/bitstream/handle/11299/197369/catrec.txt"}
\hlstd{bar} \hlkwb{<-} \hlkwd{sub}\hlstd{(}\hlstr{"^.*/"}\hlstd{,} \hlstr{""}\hlstd{, foo)}
\hlkwa{if} \hlstd{(}\hlopt{!} \hlkwd{file.exists}\hlstd{(bar))}
    \hlkwd{download.file}\hlstd{(foo, bar)}
\hlstd{dat} \hlkwb{<-} \hlkwd{read.table}\hlstd{(bar,} \hlkwc{header} \hlstd{=} \hlnum{TRUE}\hlstd{)}
\hlkwd{dim}\hlstd{(dat)}
\end{alltt}
\begin{verbatim}
## [1] 128   8
\end{verbatim}
\begin{alltt}
\hlkwd{names}\hlstd{(dat)}
\end{alltt}
\begin{verbatim}
## [1] "v1" "v2" "v3" "v4" "v5" "v6" "v7" "y"
\end{verbatim}
\end{kframe}
\end{knitrout}

These data are included in the \texttt{glmdr} package.

\begin{knitrout}
\definecolor{shadecolor}{rgb}{0.969, 0.969, 0.969}\color{fgcolor}\begin{kframe}
\begin{alltt}
\hlkwd{data}\hlstd{(catrec)}
\hlkwd{all.equal}\hlstd{(catrec, dat)}
\end{alltt}
\begin{verbatim}
## [1] TRUE
\end{verbatim}
\end{kframe}
\end{knitrout}

\subsection{Fitting the Model}

Following \citet{geyer-gdor} we assume Poisson rather than multinomial
sampling.  These two sampling schemes have the same MLE, even when the MLE
is in the Barndorff-Nielsen completion
[\citealp[Section~8.6.7]{agresti}; \citealp[Section~3.17]{geyer-gdor}]
but Poisson sampling is the easiest to fit.
We can use R function \texttt{glm} if the MLE exists in the conventional
sense, and R function \texttt{glmdr} otherwise.
\begin{knitrout}
\definecolor{shadecolor}{rgb}{0.969, 0.969, 0.969}\color{fgcolor}\begin{kframe}
\begin{alltt}
\hlstd{gout} \hlkwb{<-} \hlkwd{glmdr}\hlstd{(y} \hlopt{~} \hlstd{(v1} \hlopt{+} \hlstd{v2} \hlopt{+} \hlstd{v3} \hlopt{+} \hlstd{v4} \hlopt{+} \hlstd{v5} \hlopt{+} \hlstd{v6} \hlopt{+} \hlstd{v7)}\hlopt{^}\hlnum{3}\hlstd{,}
    \hlkwc{family} \hlstd{=} \hlstr{"poisson"}\hlstd{,} \hlkwc{data} \hlstd{= dat)}
\hlkwd{summary}\hlstd{(gout)}
\end{alltt}
\begin{verbatim}
## 
## MLE exists in Barndorff-Nielsen completion 
## it is conditional on components of the response 
## corresponding to object$linearity == FALSE being 
## conditioned on their observed values 
## 
## GLM summary for limiting conditional model
## 
## 
## Call:
## stats::glm(formula = y ~ (v1 + v2 + v3 + v4 + v5 + v6 + v7)^3, 
##     family = "poisson", data = dat, subset = c("2", "3", "4", 
##     "5", "6", "7", "8", "10", "11", "12", "13", "14", "15", "16", 
##     "17", "18", "19", "21", "22", "23", "24", "25", "26", "27", 
##     "29", "30", "31", "32", "34", "35", "36", "37", "38", "39", 
##     "40", "42", "43", "44", "45", "46", "47", "48", "49", "50", 
##     "51", "53", "54", "55", "56", "57", "58", "59", "61", "62", 
##     "63", "64", "66", "67", "68", "69", "70", "71", "72", "74", 
##     "75", "76", "77", "78", "79", "80", "81", "82", "83", "85", 
##     "86", "87", "88", "89", "90", "91", "93", "94", "95", "96", 
##     "98", "99", "100", "101", "102", "103", "104", "106", "107", 
##     "108", "109", "110", "111", "112", "113", "114", "115", "117", 
##     "118", "119", "120", "121", "122", "123", "125", "126", "127", 
##     "128"), x = TRUE, y = TRUE)
## 
## Deviance Residuals: 
##      Min        1Q    Median        3Q       Max  
## -1.63571  -0.30009  -0.02353   0.27258   1.42540  
## 
## Coefficients: (1 not defined because of singularities)
##              Estimate Std. Error z value Pr(>|z|)    
## (Intercept)  2.150481   0.585423   3.673 0.000239 ***
## v1           0.069795   0.587067   0.119 0.905364    
## v2          -0.524215   0.513583  -1.021 0.307396    
## v3           0.052966   0.551965   0.096 0.923552    
## v4          -0.709525   0.580147  -1.223 0.221326    
## v5           0.243002   0.548686   0.443 0.657853    
## v6          -1.163256   0.563668  -2.064 0.039044 *  
## v7          -0.990704   0.597335  -1.659 0.097208 .  
## v1:v2        0.384345   0.543024   0.708 0.479079    
## v1:v3       -0.630375   0.570151  -1.106 0.268888    
## v1:v4        0.008801   0.511458   0.017 0.986271    
## v1:v5       -1.022805   0.570440  -1.793 0.072971 .  
## v1:v6        0.540164   0.493879   1.094 0.274079    
## v1:v7        0.097178   0.536628   0.181 0.856297    
## v2:v3        0.602411   0.437371   1.377 0.168405    
## v2:v4        0.748226   0.486811   1.537 0.124295    
## v2:v5       -0.068926   0.428100  -0.161 0.872090    
## v2:v6        0.297165   0.487409   0.610 0.542071    
## v2:v7        0.274198   0.508369   0.539 0.589634    
## v3:v4       -0.124465   0.541056  -0.230 0.818060    
## v3:v5       -0.439354   0.468418  -0.938 0.348268    
## v3:v6        0.024399   0.530220   0.046 0.963296    
## v3:v7       -0.104400   0.556960  -0.187 0.851310    
## v4:v5       -0.169421   0.521323  -0.325 0.745194    
## v4:v6        0.756513   0.474213   1.595 0.110644    
## v4:v7        0.780671   0.500911   1.559 0.119114    
## v5:v6        1.245629   0.510770   2.439 0.014739 *  
## v5:v7       -0.262620   0.523125  -0.502 0.615652    
## v6:v7        0.697014   0.489957   1.423 0.154852    
## v1:v2:v3    -0.349902   0.483330  -0.724 0.469102    
## v1:v2:v4     0.101569   0.389778   0.261 0.794416    
## v1:v2:v5     0.655208   0.493737   1.327 0.184496    
## v1:v2:v6    -0.329286   0.390979  -0.842 0.399670    
## v1:v2:v7    -0.520368   0.393042  -1.324 0.185520    
## v1:v3:v4     0.353292   0.406623   0.869 0.384932    
## v1:v3:v5     0.638711   0.484979   1.317 0.187843    
## v1:v3:v6     0.352694   0.402715   0.876 0.381143    
## v1:v3:v7    -0.001586   0.413554  -0.004 0.996941    
## v1:v4:v5     0.664745   0.400212   1.661 0.096717 .  
## v1:v4:v6    -0.463885   0.368214  -1.260 0.207732    
## v1:v4:v7    -0.342583   0.372009  -0.921 0.357103    
## v1:v5:v6     0.044968   0.399958   0.112 0.910481    
## v1:v5:v7     0.447641   0.404364   1.107 0.268283    
## v1:v6:v7     0.218868   0.371499   0.589 0.555763    
## v2:v3:v4    -0.325914   0.404392  -0.806 0.420280    
## v2:v3:v5           NA         NA      NA       NA    
## v2:v3:v6    -0.247853   0.405621  -0.611 0.541168    
## v2:v3:v7     0.028322   0.414520   0.068 0.945527    
## v2:v4:v5     0.004655   0.394418   0.012 0.990583    
## v2:v4:v6    -0.111152   0.373713  -0.297 0.766141    
## v2:v4:v7    -0.148061   0.376692  -0.393 0.694279    
## v2:v5:v6    -0.766051   0.394925  -1.940 0.052412 .  
## v2:v5:v7     0.075213   0.399004   0.189 0.850482    
## v2:v6:v7     0.460826   0.381109   1.209 0.226597    
## v3:v4:v5    -0.063494   0.423318  -0.150 0.880771    
## v3:v4:v6     0.357746   0.366298   0.977 0.328741    
## v3:v4:v7    -0.106368   0.371567  -0.286 0.774672    
## v3:v5:v6    -0.234816   0.422424  -0.556 0.578295    
## v3:v5:v7     0.804923   0.423843   1.899 0.057550 .  
## v3:v6:v7    -0.659090   0.371085  -1.776 0.075714 .  
## v4:v5:v6    -0.427957   0.375755  -1.139 0.254734    
## v4:v5:v7     0.125167   0.377356   0.332 0.740119    
## v4:v6:v7     0.014192   0.370131   0.038 0.969413    
## v5:v6:v7    -0.811516   0.377098  -2.152 0.031397 *  
## ---
## Signif. codes:  
## 0 '***' 0.001 '**' 0.01 '*' 0.05 '.' 0.1 ' ' 1
## 
## (Dispersion parameter for poisson family taken to be 1)
## 
##     Null deviance: 156.215  on 111  degrees of freedom
## Residual deviance:  31.291  on  49  degrees of freedom
## AIC: 526.46
## 
## Number of Fisher Scoring iterations: 5
\end{verbatim}
\end{kframe}
\end{knitrout}

This agrees with the result in the technical
report \citep[Section~4.2.1]{geyer-tech} accompanying \cite{geyer-gdor}.

\subsection{Linearity}
\label{sec:example-iv-linearity}

We then find the linearity as in preceding sections.
\begin{knitrout}
\definecolor{shadecolor}{rgb}{0.969, 0.969, 0.969}\color{fgcolor}\begin{kframe}
\begin{alltt}
\hlstd{linearity} \hlkwb{<-} \hlstd{gout}\hlopt{$}\hlstd{linearity}
\hlstd{catrec[}\hlopt{!}\hlstd{linearity, ]}
\end{alltt}
\begin{verbatim}
##     v1 v2 v3 v4 v5 v6 v7 y
## 1    0  0  0  0  0  0  0 0
## 9    0  0  0  1  0  0  0 0
## 20   1  1  0  0  1  0  0 0
## 28   1  1  0  1  1  0  0 0
## 33   0  0  0  0  0  1  0 0
## 41   0  0  0  1  0  1  0 0
## 52   1  1  0  0  1  1  0 0
## 60   1  1  0  1  1  1  0 0
## 65   0  0  0  0  0  0  1 0
## 73   0  0  0  1  0  0  1 0
## 84   1  1  0  0  1  0  1 0
## 92   1  1  0  1  1  0  1 0
## 97   0  0  0  0  0  1  1 0
## 105  0  0  0  1  0  1  1 0
## 116  1  1  0  0  1  1  1 0
## 124  1  1  0  1  1  1  1 0
\end{verbatim}
\end{kframe}
\end{knitrout}

This agrees with (part of) Table~2 in \citep{geyer-gdor}.

\subsection{One-sided confidence intervals: Poisson sampling}
\label{sec:one-sided-poisson}

We now provide one-sided confidence intervals for mean value parameters 
whose MLE is on the boundary as done before.

\begin{knitrout}
\definecolor{shadecolor}{rgb}{0.969, 0.969, 0.969}\color{fgcolor}\begin{kframe}
\begin{alltt}
\hlkwd{system.time}\hlstd{(tab} \hlkwb{<-} \hlkwd{inference}\hlstd{(gout))}
\end{alltt}
\begin{verbatim}
##    user  system elapsed 
##   0.564   0.012   0.578
\end{verbatim}
\begin{alltt}
\hlstd{upper} \hlkwb{<-} \hlstd{tab}\hlopt{$}\hlstd{upper}
\hlkwd{cbind}\hlstd{(catrec[}\hlopt{!}\hlstd{linearity, ], upper)}
\end{alltt}
\begin{verbatim}
##     v1 v2 v3 v4 v5 v6 v7 y      upper
## 1    0  0  0  0  0  0  0 0 0.28630976
## 9    0  0  0  1  0  0  0 0 0.14082947
## 20   1  1  0  0  1  0  0 0 0.21996699
## 28   1  1  0  1  1  0  0 0 0.42095570
## 33   0  0  0  0  0  1  0 0 0.08946242
## 41   0  0  0  1  0  1  0 0 0.09376644
## 52   1  1  0  0  1  1  0 0 0.19302341
## 60   1  1  0  1  1  1  0 0 0.28869770
## 65   0  0  0  0  0  0  1 0 0.10631113
## 73   0  0  0  1  0  0  1 0 0.11415034
## 84   1  1  0  0  1  0  1 0 0.09128766
## 92   1  1  0  1  1  0  1 0 0.26461098
## 97   0  0  0  0  0  1  1 0 0.06669488
## 105  0  0  0  1  0  1  1 0 0.15477613
## 116  1  1  0  0  1  1  1 0 0.14096916
## 124  1  1  0  1  1  1  1 0 0.32392016
\end{verbatim}
\end{kframe}
\end{knitrout}

This agrees with Table~2 in \citep{geyer-gdor}.

\subsubsection{Theory}
\label{sec:theory-poisson}

Here we modify Section~\ref{sec:theory-logistic} above, changing what
needs to be changed for Poisson regression rather than logistic regression.

As in Section~\ref{sec:theory-logistic} above,
let $\beta$ denote the vector of submodel canonical parameters,
let $l(\beta)$ denote the log likelihood,
and let $\hat{\beta}$ denote an MLE in the LCM.
We will use
the vector \verb@gout$lcm$coefficients@ with \texttt{NA} values
replaced by zeros.
Let $I$ denote the index set of the components of the response vector
on which we condition the OM to get the LCM (the indices of components of
\verb@linearity@ that are \texttt{FALSE}), and let $Y_I$ and $y_I$
denote the corresponding components of the response vector considered
as a random vector and as an observed
value, respectively.  Then endpoints for a $100 (1 - \alpha)\%$ confidence
interval for a scalar parameter $g(\beta)$ are given
by \eqref{eq:g-optim}, when it does give a one-sided interval.

Since the only boundary of the mean value parameter space of the Poisson
distribution is zero,
in this section, we will be doing confidence intervals for
mean value parameters for cells of the
contingency table where the MLE in the LCM is zero.  And we know the min
is zero, so we only have to calculate the max.

In \eqref{eq:g-optim} 
$\pr$ denotes probability with respect to the OM not
the LCM.  As always in categorical data analysis, we have different possible
sampling models: Poisson, multinomial, and product multinomial.  So we get
different intervals depending on which sampling model we use.  In this
section we are assuming Poisson.

Let $M$ denote the model matrix.  Let $\theta = M \beta$ denote the saturated
model canonical parameter (usually called ``linear predictor'' in GLM theory).

Let $\mu = \exp(\theta)$ denote the mean value parameter (here $\exp$
operates componentwise like the R function of the same name does), then
$$
   \pr_\beta(Y_I = y_I) =
   \pr_\beta(Y_I = 0) =
   \exp\left( - \sum_{i \in I} \mu_i \right)
$$

We could take the confidence interval problem to be
\begin{equation} \label{eq:poisson-ci-problem-fubar}
\begin{split}
   \text{maximize} & \quad \mu_k
   \\
   \text{subject to} & \quad \exp\left(- \sum_{i \in I} \mu_i\right) \ge \alpha
\end{split}
\end{equation}
where $\mu$ is taken to be the function of $\gamma$ described above.
And this can be done for any $k \in I$.

But the problem will be more computationally stable if we state it as
\begin{equation} \label{eq:poisson-ci-problem-semi-fubar}
\begin{split}
   \text{maximize} & \quad \theta_k
   \\
   \text{subject to} & \quad - \sum_{i \in I} \mu_i \ge \log(\alpha)
\end{split}
\end{equation}
Since $\mu_k = \exp(\theta_k)$ is a monotone transformation and log is
a monotone transformation, the two
problems are equivalent (a solution for one is also a solution
for the other).

We maximize canonical rather than mean value parameters to avoid extreme
inexactness of computer arithmetic in calculating mean value parameters
near zero.  We take logs in the constraint for the same reasons
we take logs of likelihoods.

Because optimizers expect to optimize over $\R^q$ for some $q$,
let $N$ be a matrix whose columns are a basis for $\Gamma_\text{lim}$
(the R matrix \texttt{nulls} calculated above, for example).
Then every $\gamma \in \Gamma_\text{lim}$ can be written as
$\gamma = N \xi$ for some $\xi \in \R^q$,
where $q$ is the column dimension
of $N$ and the dimension of $\Gamma_\text{lim}$.

To an optimizer (we will use R function \texttt{auglag} in CRAN package
\texttt{alabama}) problem \eqref{eq:poisson-ci-problem-semi-fubar} has the 
abstract form \eqref{eq:abstract}
and the optimization works better if derivatives of $f$ and $g$ are provided.
Because R function \texttt{auglag} only does minimization, the objective
function must be the negation of what we have in 
\eqref{eq:poisson-ci-problem-semi-fubar}.  That is
\begin{align*}
   f(\xi) & = - \theta_k
   \\
   \frac{\partial f(\xi)}{\partial \xi_j}
   & =
   - o_{k j}
   \\
   g(\xi) & = - \sum_{i \in I} \mu_i - \log(\alpha)
   \\
   \frac{\partial g(\xi)}{\partial \xi_j}
   & =
   - \sum_{i \in I} \mu_i o_{i j}
\end{align*}
where $o_{i j}$ are the components of $O = M N$.

\subsubsection{Quick and dirty intervals}
\label{sec:quick-and-dirty-poisson}

As a sanity check and as a quick and dirty conservative (perhaps very
conservative) confidence interval, we note that since all the $\mu_i$
are nonnegative, the only way the constraint in
\eqref{eq:poisson-ci-problem-fubar} can be satisfied is if
$\mu_k \le - \log(\alpha)$.
For $\alpha = 0.05$ this upper bound is -$\log(0.05) = 2.996$. No upper bound for a one-sided 95\% confidence interval for the mean value parameter for a cell for which the MLE in the LCM is zero can be larger than that.

\subsection{One-sided confidence intervals: Multinomial sampling}
\label{sec:one-sided-multinomial}

\subsubsection{Theory}
\label{sec:theory-multinomial}

We use the same notation as in Section~\ref{sec:theory-poisson} above,
except where modified here.

Since the only boundary of the mean value parameter space of the multinomial
distribution is where one or more components of the state vector are zero,
we will be doing confidence intervals for
mean value parameters for cells of the
contingency table where the MLE in the LCM is zero.  And we know the min
is zero, so we only have to calculate the max.
(If the MLE in the LCM for mean value parameter vector had all but one
component equal to zero, so the other was equal to one, then we could
make one-sided intervals for all components.  But that is not a situation
we see in any of our examples, and we will leave that as an exercise for
the reader.)

For multinomial sampling, contingency table cell probabilities are defined by
\begin{equation} \label{eq:multinomial-parameterize}
   p_i = \frac{e^{\theta_i}}{\sum_{j \in J} e^{\theta_j}},
   \qquad i \in J,
\end{equation}
where $J$ is the index set for the whole table.

Now
$$
   \pr_\beta(Y_I = y_I) =
   \pr_\beta(Y_I = 0) =
   \left( \sum_{i \in J \setminus I} p_i \right)^n
$$
where
$$
   n = \sum_{j \in J} y_j
$$
is the multinomial sample size,
where $I$ is the index set of the cells that have mean value zero
for the MLE in the LCM.

So we could take the confidence interval problem to be
\begin{equation} \label{eq:multinomial-ci-problem-fubar}
\begin{split}
   \text{maximize} & \quad p_k
   \\
   \text{subject to} & \quad
   \left( \sum_{i \in J \setminus I} p_i \right)^n \ge \alpha
\end{split}
\end{equation}
where $p$ is taken to be the function of $\gamma$ described above.
And this can be done for any $k \in I$.

Unlike preceding theory for this problem, we cannot take $\theta_k$ to
be the objective function because $p_k$ is not a function of $\theta_k$ only
(much less a monotone function of it).  Consequently, to obtain computational
stability, we will take logs of both equations obtaining
\begin{equation} \label{eq:multinomial-ci-problem}
\begin{split}
   \text{maximize} & \quad \theta_k -
   \log \left( \sum_{j \in J} e^{\theta_j} \right)
   \\
   \text{subject to} & \quad
   n \log \left( \sum_{i \in J \setminus I} e^{\theta_i} \right)
   -
   n \log \left( \sum_{j \in J} e^{\theta_j} \right)
   \ge \log(\alpha)
\end{split}
\end{equation}

The parameterization \eqref{eq:multinomial-parameterize}
introduces a direction of constancy (DOC)
\citep[Theorem~1 and the following discussion]{geyer-gdor} that is
the same as the DOC we had in the Bradley-Terry model
(Section~\ref{sec:geyer-sports} above), the vector all of whose
components are the same.

So perhaps we should redo our null space of the Fisher information matrix
calculation using the multinomial distribution.  But this is not necessary.
Movement along the DOC does not change any of the $p_i$ so does not
change any of the equations in either of our optimization problems.
We do not need to add it to the null space we obtained from the Poisson
analysis.
(Section~3.17 in \citep{geyer-gdor} shows that every DOR for the Poisson
model is also a DOR for the multinomial model.)

Thus our problem has the abstract form \eqref{eq:abstract} with
\begin{align}
   f(\xi)
   & =
   - \theta_k +
   \log \left( \sum_{j \in J} e^{\theta_j} \right)
   \label{eq:f}
   \\
   \frac{\partial f(\xi)}{\partial \xi_j}
   & =
   - o_{k j} +
   \frac{\sum_{i \in J} e^{\theta_i} o_{i j}}{\sum_{i \in J} e^{\theta_i}}
   \label{eq:df}
\end{align}
where $o_{k j}$ are the components of $O = M N$, and
\begin{align}
   g(\xi) & =
   n \log \left( \sum_{i \in J \setminus I} e^{\theta_i} \right)
   -
   n \log \left( \sum_{j \in J} e^{\theta_j} \right)
   -
   \log(\alpha)
   \label{eq:g}
   \\
   \frac{\partial g(\xi)}{\partial \xi_j}
   & =
   n \frac{ \sum_{i \in J \setminus I} e^{\theta_i} o_{i j} }
   { \sum_{k \in J \setminus I} e^{\theta_k} }
   -
   n \frac{ \sum_{i \in J} e^{\theta_i} o_{i j} }
   { \sum_{k \in J} e^{\theta_k} }
   \label{eq:dg}
   \\
   & =
   n \sum_{i \in J} (p^*_i - p_i) o_{i j}
   \nonumber
\end{align}
where
$$
   p^*_i
   =
   \begin{cases}
     e^{\theta_i} \big/ \sum_{j \in J \setminus I} e^{\theta_j},
     & i \in J \setminus I
     \\
     0, & \text{otherwise}
   \end{cases}
$$
($p$ is the vector of probabilities in the OM,
$p^*$ is the vector of probabilities in the LCM).

\subsubsection{Quick and dirty intervals}

If $p_i > 0$ for some $i \in I$, then
$$
   \left( \sum_{j \in J \setminus I} p_j \right)^n
   \le
   ( 1 - p_i )^n
$$
Introducing $\mu_i = n p_i$ we get
$$
   \alpha \le
   \left( \sum_{i \in J \setminus I} p_i \right)^n
   \le
   \left( 1 - \frac{\mu_i}{n} \right)^n
   \approx
   \exp(- \mu_i)
$$
for large $n$.  Thus this agrees with our analysis in
Section~\ref{sec:quick-and-dirty-poisson} when $n$ is large.

We get the exact inequality
$$
   \alpha \le \left( 1 - \frac{\mu_i}{n} \right)^n
$$
or
$$
   \alpha^{1 / n} \le 1 - \frac{\mu_i}{n}
$$
or
$$
   \mu_i \le n (1 - \alpha^{1 / n})
   = 2.9875
$$
when $n = 544$, which is what it is in this example,
and $\alpha = 0.05$.  And this too agrees approximately with our analysis
in Section~\ref{sec:quick-and-dirty-poisson} above.

\subsubsection{Careful coding}

We can modify \eqref{eq:f} above as
$$
   f(\xi)
   =
   a - \theta_k +
   \log \left( \sum_{j \in J} e^{\theta_j - a} \right)
$$
where $a$ is any real number.  We avoid overflow and catastrophic
cancellation if we choose
$$
   a = \theta_m = \max_{j \in J} \theta_j
$$
in which case we have
$$
   f(\xi)
   =
   \theta_m - \theta_k +
   \log\left(1 + \sum_{j \in J \setminus \{m\}}
   e^{\theta_j - \theta_m} \right)
$$
in which overflow cannot occur and we avoid catastrophic cancellation
in $\log(1 + x)$ for small $x$.

Using the same definition of $\theta_m$, we modify \eqref{eq:df} above as
$$
   \frac{\partial f(\xi)}{\partial \xi_j}
   =
   - o_{k j} +
   \frac{e^{\theta_k - \theta_m} o_{k j}}
   {\sum_{i \in J} e^{\theta_i - \theta_m}}
   =
   \left[
   - 1 + 
   \frac{e^{\theta_k - \theta_m}}
   {\sum_{i \in J} e^{\theta_i - \theta_m}}
   \right]
   o_{k j}
$$
in which overflow cannot occur.

We can modify \eqref{eq:g} above as
$$
   g(\xi)
   =
   n b + n \log \left( \sum_{i \in J \setminus I} e^{\theta_i - b} \right)
   -
   n a - n \log \left( \sum_{j \in J} e^{\theta_j - a} \right)
   -
   \log(\alpha)
$$
where $a$ and $b$ are any real numbers.  We avoid overflow and catastrophic
cancellation if we choose $a$ as above and
$$
   b = \theta_{m^*} = \max_{i \in J \setminus I} \theta_i
$$
in which case we have
$$
   g(\xi)
   =
   n \left[ \theta_{m^*} - \theta_m +
   \log \left(1 + \sum_{i \in (J \setminus I) \setminus \{ m^* \}}
   e^{\theta_i - \theta_{m^*}} \right)
   -
   \log\left(1 + \sum_{j \in J \setminus \{ m \}}
   e^{\theta_j - \theta_m} \right) \right]
   -
   \log(\alpha)
$$
in which overflow cannot occur and we avoid catastrophic cancellation
in $\log(1 + x)$ for small $x$.

Then using the same definitions of $\theta_m$ and $\theta_{m^*}$ we
modify \eqref{eq:dg} above as
$$
   \frac{\partial g(\xi)}{\partial \xi_j}
   =
   n \left[
   \frac{ \sum_{i \in J \setminus I} e^{\theta_i - \theta_{m^*}} o_{i j} }
   { \sum_{k \in J \setminus I} e^{\theta_k - \theta_{m^*}} }
   -
   \frac{ \sum_{i \in J} e^{\theta_i - \theta_m} o_{i j} }
   { \sum_{k \in J} e^{\theta_k - \theta_m} }
   \right]
$$
in which overflow cannot occur.

\subsection{Linearity by computational geometry}
\label{sec:linearity-example-iv-rcdd}

Calculate linearity using R package \texttt{rcdd} like in Section~\ref{sec:linearity-example-i-rcdd} above. We follow Section~4.2 of \citep{geyer-tech}.

\begin{knitrout}
\definecolor{shadecolor}{rgb}{0.969, 0.969, 0.969}\color{fgcolor}\begin{kframe}
\begin{alltt}
\hlstd{tanv} \hlkwb{<-} \hlstd{gout}\hlopt{$}\hlstd{modmat}
\hlstd{vrep} \hlkwb{<-} \hlkwd{cbind}\hlstd{(}\hlnum{0}\hlstd{,} \hlnum{0}\hlstd{, tanv)}
\hlstd{vrep[dat}\hlopt{$}\hlstd{y} \hlopt{>} \hlnum{0}\hlstd{,} \hlnum{1}\hlstd{]} \hlkwb{<-} \hlnum{1}
\hlkwd{system.time}\hlstd{(lout} \hlkwb{<-} \hlkwd{linearity}\hlstd{(}\hlkwd{d2q}\hlstd{(vrep),} \hlkwc{rep} \hlstd{=} \hlstr{"V"}\hlstd{))}
\end{alltt}
\begin{verbatim}
##    user  system elapsed 
##   4.334   0.003   4.337
\end{verbatim}
\begin{alltt}
\hlstd{linearity.too} \hlkwb{<-} \hlstd{dat}\hlopt{$}\hlstd{y} \hlopt{>} \hlnum{0}
\hlstd{linearity.too[lout]} \hlkwb{<-} \hlnum{TRUE}
\hlkwd{identical}\hlstd{(}\hlkwd{as.vector}\hlstd{(linearity), linearity.too)}
\end{alltt}
\begin{verbatim}
## [1] TRUE
\end{verbatim}
\end{kframe}
\end{knitrout}

So this agrees with our analysis in Section~\ref{sec:example-iv-linearity} above, except that the repeated linear programming implementation is slower than the implementation developed here.

\subsection{Generic direction of recession}
\label{sec:gdor-example-iv-rcdd}

We calculate a GDOR using R package \texttt{rcdd} as in Section~\ref{sec:gdor-example-i-rcdd} above. More specifically, we follow Section~4.2 of \citep{geyer-tech}, so we necessarily agree with the GDOR given in Table~1 of \citep{geyer-gdor}.

\begin{knitrout}
\definecolor{shadecolor}{rgb}{0.969, 0.969, 0.969}\color{fgcolor}\begin{kframe}
\begin{alltt}
\hlstd{modmat} \hlkwb{<-} \hlstd{gout}\hlopt{$}\hlstd{modmat}
\hlstd{hrep} \hlkwb{<-} \hlkwd{cbind}\hlstd{(}\hlnum{0}\hlstd{,} \hlnum{0}\hlstd{,} \hlopt{-}\hlstd{tanv,} \hlnum{0}\hlstd{)}
\hlstd{hrep[}\hlopt{!} \hlstd{linearity,} \hlkwd{ncol}\hlstd{(hrep)]} \hlkwb{<-} \hlstd{(}\hlopt{-}\hlnum{1}\hlstd{)}
\hlstd{hrep[linearity,} \hlnum{1}\hlstd{]} \hlkwb{<-} \hlnum{1}
\hlstd{hrep} \hlkwb{<-} \hlkwd{rbind}\hlstd{(hrep,} \hlkwd{c}\hlstd{(}\hlnum{0}\hlstd{,} \hlnum{1}\hlstd{,} \hlkwd{rep}\hlstd{(}\hlnum{0}\hlstd{,} \hlkwd{ncol}\hlstd{(gout}\hlopt{$}\hlstd{modmat)),} \hlopt{-}\hlnum{1}\hlstd{))}
\hlstd{objv} \hlkwb{<-} \hlkwd{c}\hlstd{(}\hlkwd{rep}\hlstd{(}\hlnum{0}\hlstd{,} \hlkwd{ncol}\hlstd{(gout}\hlopt{$}\hlstd{modmat)),} \hlnum{1}\hlstd{)}
\hlstd{pout} \hlkwb{<-} \hlkwd{lpcdd}\hlstd{(}\hlkwd{d2q}\hlstd{(hrep),} \hlkwd{d2q}\hlstd{(objv),} \hlkwc{minimize} \hlstd{=} \hlnum{FALSE}\hlstd{)}
\hlstd{gdor} \hlkwb{<-} \hlstd{pout}\hlopt{$}\hlstd{primal.solution[}\hlopt{-}\hlkwd{length}\hlstd{(pout}\hlopt{$}\hlstd{primal.solution)]}

\hlstd{foo} \hlkwb{<-} \hlstd{gdor}
\hlkwd{names}\hlstd{(foo)} \hlkwb{<-} \hlkwd{colnames}\hlstd{(modmat)}
\hlkwd{cbind}\hlstd{(foo[foo} \hlopt{!=} \hlstr{"0"}\hlstd{])}
\end{alltt}
\begin{verbatim}
##             [,1]
## (Intercept) "-1"
## v1          "1" 
## v2          "1" 
## v3          "1" 
## v5          "1" 
## v1:v2       "-1"
## v1:v3       "-1"
## v1:v5       "-1"
## v2:v3       "-1"
## v2:v5       "-1"
## v3:v5       "-1"
## v1:v2:v3    "1" 
## v1:v3:v5    "1" 
## v2:v3:v5    "1"
\end{verbatim}
\end{kframe}
\end{knitrout}

This agrees with Table~1 in \citep{geyer-gdor}.
Clean R global environment.

\begin{knitrout}
\definecolor{shadecolor}{rgb}{0.969, 0.969, 0.969}\color{fgcolor}\begin{kframe}
\begin{alltt}
\hlkwd{rm}\hlstd{(}\hlkwc{list} \hlstd{=} \hlkwd{ls}\hlstd{())}
\end{alltt}
\end{kframe}
\end{knitrout}

\section{A big data example}
\label{sec:big-data}

\subsection{Data}

Load the data.
\begin{knitrout}
\definecolor{shadecolor}{rgb}{0.969, 0.969, 0.969}\color{fgcolor}\begin{kframe}
\begin{alltt}
\hlstd{foo} \hlkwb{<-} \hlstr{"https://conservancy.umn.edu/bitstream/handle/11299/197369/bigcategorical.txt"}
\hlstd{bar} \hlkwb{<-} \hlkwd{sub}\hlstd{(}\hlstr{"^.*/"}\hlstd{,} \hlstr{""}\hlstd{, foo)}
\hlkwa{if} \hlstd{(}\hlopt{!} \hlkwd{file.exists}\hlstd{(bar))}
    \hlkwd{download.file}\hlstd{(foo, bar)}
\hlstd{dat} \hlkwb{<-} \hlkwd{read.table}\hlstd{(bar,} \hlkwc{header} \hlstd{=} \hlnum{TRUE}\hlstd{,} \hlkwc{stringsAsFactors} \hlstd{=} \hlnum{TRUE}\hlstd{)}
\hlkwd{dim}\hlstd{(dat)}
\end{alltt}
\begin{verbatim}
## [1] 1024    6
\end{verbatim}
\begin{alltt}
\hlkwd{names}\hlstd{(dat)}
\end{alltt}
\begin{verbatim}
## [1] "x1" "x2" "x3" "x4" "x5" "y"
\end{verbatim}
\end{kframe}
\end{knitrout}

The response vector is \texttt{y}, the predictors \texttt{x1} through \texttt{x5} are all categorical.  The components of \texttt{y} are all counts, so this is a categorical data analysis.  This contingency table has 1024 cells and the multinomial sample size (sum of cell counts) is 1055. These data are included in the \texttt{glmdr} package.

\begin{knitrout}
\definecolor{shadecolor}{rgb}{0.969, 0.969, 0.969}\color{fgcolor}\begin{kframe}
\begin{alltt}
\hlkwd{data}\hlstd{(bigcategorical)}
\hlkwd{all.equal}\hlstd{(dat, bigcategorical)}
\end{alltt}
\begin{verbatim}
## [1] TRUE
\end{verbatim}
\end{kframe}
\end{knitrout}

\subsection{Hypothesis Tests}

As in Section~\ref{sec:geyer-categorical} above, we assume a Poisson sampling model rather than a multinomial sampling model for the reasons stated in that section. Actually, as is well known \citep[Section~8.6.7]{agresti}, the MLE for the mean value parameter vector and the asymptotic chi-square distribution of test statistics is the same for Poisson, multinomial, and product multinomial sampling.  So nothing in this section depends on the sampling model.

\begin{knitrout}
\definecolor{shadecolor}{rgb}{0.969, 0.969, 0.969}\color{fgcolor}\begin{kframe}
\begin{alltt}
\hlstd{out1} \hlkwb{<-} \hlkwd{glm}\hlstd{(y} \hlopt{~} \hlnum{0} \hlopt{+} \hlstd{.,}
    \hlkwc{family} \hlstd{= poisson,} \hlkwc{data} \hlstd{= dat,} \hlkwc{x} \hlstd{=} \hlnum{TRUE}\hlstd{,}
    \hlkwc{control} \hlstd{=} \hlkwd{list}\hlstd{(}\hlkwc{maxit} \hlstd{=} \hlnum{1e3}\hlstd{,} \hlkwc{epsilon} \hlstd{=} \hlnum{1e-12}\hlstd{))}
\hlstd{out2} \hlkwb{<-} \hlkwd{glm}\hlstd{(y} \hlopt{~} \hlnum{0} \hlopt{+} \hlstd{(.)}\hlopt{^}\hlnum{2}\hlstd{,}
    \hlkwc{family} \hlstd{= poisson,} \hlkwc{data} \hlstd{= dat,} \hlkwc{x} \hlstd{=} \hlnum{TRUE}\hlstd{,}
    \hlkwc{control} \hlstd{=} \hlkwd{list}\hlstd{(}\hlkwc{maxit} \hlstd{=} \hlnum{1e3}\hlstd{,} \hlkwc{epsilon} \hlstd{=} \hlnum{1e-12}\hlstd{))}
\hlstd{out3} \hlkwb{<-} \hlkwd{glm}\hlstd{(y} \hlopt{~} \hlnum{0} \hlopt{+} \hlstd{(.)}\hlopt{^}\hlnum{3}\hlstd{,}
    \hlkwc{family} \hlstd{= poisson,} \hlkwc{data} \hlstd{= dat,} \hlkwc{x} \hlstd{=} \hlnum{TRUE}\hlstd{,}
    \hlkwc{control} \hlstd{=} \hlkwd{list}\hlstd{(}\hlkwc{maxit} \hlstd{=} \hlnum{1e3}\hlstd{,} \hlkwc{epsilon} \hlstd{=} \hlnum{1e-12}\hlstd{))}
\hlstd{out4} \hlkwb{<-} \hlkwd{glm}\hlstd{(y} \hlopt{~} \hlnum{0} \hlopt{+} \hlstd{(.)}\hlopt{^}\hlnum{4}\hlstd{,}
    \hlkwc{family} \hlstd{= poisson,} \hlkwc{data} \hlstd{= dat,} \hlkwc{x} \hlstd{=} \hlnum{TRUE}\hlstd{,}
    \hlkwc{control} \hlstd{=} \hlkwd{list}\hlstd{(}\hlkwc{maxit} \hlstd{=} \hlnum{1e3}\hlstd{,} \hlkwc{epsilon} \hlstd{=} \hlnum{1e-12}\hlstd{))}
\end{alltt}

{\ttfamily\noindent\color{warningcolor}{\#\# Warning: glm.fit: fitted rates numerically 0 occurred}}\begin{alltt}
\hlkwd{anova}\hlstd{(out1, out2, out3, out4,} \hlkwc{test} \hlstd{=} \hlstr{"Chisq"}\hlstd{)}
\end{alltt}
\begin{verbatim}
## Analysis of Deviance Table
## 
## Model 1: y ~ 0 + (x1 + x2 + x3 + x4 + x5)
## Model 2: y ~ 0 + (x1 + x2 + x3 + x4 + x5)^2
## Model 3: y ~ 0 + (x1 + x2 + x3 + x4 + x5)^3
## Model 4: y ~ 0 + (x1 + x2 + x3 + x4 + x5)^4
##   Resid. Df Resid. Dev  Df Deviance  Pr(>Chi)    
## 1      1008    1182.17                           
## 2       918    1076.55  90   105.62    0.1247    
## 3       648     811.73 270   264.83    0.5774    
## 4       243     277.37 405   534.36 1.605e-05 ***
## ---
## Signif. codes:  
## 0 '***' 0.001 '**' 0.01 '*' 0.05 '.' 0.1 ' ' 1
\end{verbatim}
\end{kframe}
\end{knitrout}

Despite the warning from R function \texttt{glm}, all of these hypothesis tests are valid because in none of them is \texttt{gout4} the null hypothesis. Tests done as in Table 2 in the main text.

\begin{knitrout}
\definecolor{shadecolor}{rgb}{0.969, 0.969, 0.969}\color{fgcolor}\begin{kframe}
\begin{alltt}
\hlkwd{anova}\hlstd{(out1, out4,} \hlkwc{test} \hlstd{=} \hlstr{"Chisq"}\hlstd{)}
\end{alltt}
\begin{verbatim}
## Analysis of Deviance Table
## 
## Model 1: y ~ 0 + (x1 + x2 + x3 + x4 + x5)
## Model 2: y ~ 0 + (x1 + x2 + x3 + x4 + x5)^4
##   Resid. Df Resid. Dev  Df Deviance  Pr(>Chi)    
## 1      1008    1182.17                           
## 2       243     277.37 765    904.8 0.0003447 ***
## ---
## Signif. codes:  
## 0 '***' 0.001 '**' 0.01 '*' 0.05 '.' 0.1 ' ' 1
\end{verbatim}
\begin{alltt}
\hlkwd{anova}\hlstd{(out2, out4,} \hlkwc{test} \hlstd{=} \hlstr{"Chisq"}\hlstd{)}
\end{alltt}
\begin{verbatim}
## Analysis of Deviance Table
## 
## Model 1: y ~ 0 + (x1 + x2 + x3 + x4 + x5)^2
## Model 2: y ~ 0 + (x1 + x2 + x3 + x4 + x5)^4
##   Resid. Df Resid. Dev  Df Deviance  Pr(>Chi)    
## 1       918    1076.55                           
## 2       243     277.37 675   799.18 0.0006633 ***
## ---
## Signif. codes:  
## 0 '***' 0.001 '**' 0.01 '*' 0.05 '.' 0.1 ' ' 1
\end{verbatim}
\begin{alltt}
\hlkwd{anova}\hlstd{(out3, out4,} \hlkwc{test} \hlstd{=} \hlstr{"Chisq"}\hlstd{)}
\end{alltt}
\begin{verbatim}
## Analysis of Deviance Table
## 
## Model 1: y ~ 0 + (x1 + x2 + x3 + x4 + x5)^3
## Model 2: y ~ 0 + (x1 + x2 + x3 + x4 + x5)^4
##   Resid. Df Resid. Dev  Df Deviance  Pr(>Chi)    
## 1       648     811.73                           
## 2       243     277.37 405   534.36 1.605e-05 ***
## ---
## Signif. codes:  
## 0 '***' 0.001 '**' 0.01 '*' 0.05 '.' 0.1 ' ' 1
\end{verbatim}
\end{kframe}
\end{knitrout}

These agree with Table~2 in the main text.

\subsection{Maximizing the likelihood}

We fit these data using R function \texttt{glmdr}.

\begin{knitrout}
\definecolor{shadecolor}{rgb}{0.969, 0.969, 0.969}\color{fgcolor}\begin{kframe}
\begin{alltt}
\hlstd{gout} \hlkwb{<-} \hlkwd{glmdr}\hlstd{(y} \hlopt{~} \hlnum{0} \hlopt{+} \hlstd{(.)}\hlopt{^}\hlnum{4}\hlstd{,} \hlkwc{family} \hlstd{=} \hlstr{"poisson"}\hlstd{,}
  \hlkwc{data} \hlstd{= bigcategorical)}
\end{alltt}
\end{kframe}
\end{knitrout}

\subsection{Linearity}
We then find the linearity as in preceding sections.


\begin{knitrout}
\definecolor{shadecolor}{rgb}{0.969, 0.969, 0.969}\color{fgcolor}\begin{kframe}
\begin{alltt}
\hlstd{linearity} \hlkwb{<-} \hlstd{gout}\hlopt{$}\hlstd{linearity}
\hlkwd{sum}\hlstd{(linearity)}
\end{alltt}
\begin{verbatim}
## [1] 942
\end{verbatim}
\begin{alltt}
\hlkwd{sum}\hlstd{(}\hlopt{!} \hlstd{linearity)}
\end{alltt}
\begin{verbatim}
## [1] 82
\end{verbatim}
\end{kframe}
\end{knitrout}

\subsection{One-sided confidence intervals: Poisson sampling}

We now provide one-sided confidence intervals for mean value parameters whose MLE is on the boundary as done before.  This is the full table referenced in the main text.

\begin{knitrout}
\definecolor{shadecolor}{rgb}{0.969, 0.969, 0.969}\color{fgcolor}\begin{kframe}
\begin{alltt}
\hlkwd{system.time}\hlstd{(mus.CI} \hlkwb{<-} \hlkwd{inference}\hlstd{(gout))}
\end{alltt}
\begin{verbatim}
##    user  system elapsed 
##  57.416 161.807  60.102
\end{verbatim}
\begin{alltt}
\hlstd{upper} \hlkwb{<-} \hlkwd{round}\hlstd{(mus.CI[,} \hlkwd{ncol}\hlstd{(mus.CI)],} \hlnum{4}\hlstd{)}
\hlstd{tab} \hlkwb{<-} \hlkwd{cbind}\hlstd{(dat[}\hlopt{!}\hlstd{linearity, ], upper)}
\hlstd{tab}
\end{alltt}
\begin{verbatim}
##      x1 x2 x3 x4 x5 y  upper
## 17    a  a  b  a  a 0 0.1695
## 21    a  b  b  a  a 0 0.1354
## 25    a  c  b  a  a 0 0.2292
## 29    a  d  b  a  a 0 2.4616
## 48    d  d  c  a  a 0 0.0002
## 57    a  c  d  a  a 0 0.0133
## 58    b  c  d  a  a 0 0.5647
## 59    c  c  d  a  a 0 0.2790
## 60    d  c  d  a  a 0 2.1519
## 105   a  c  c  b  a 0 0.1060
## 106   b  c  c  b  a 0 0.6088
## 107   c  c  c  b  a 0 2.2809
## 108   d  c  c  b  a 0 1.4718
## 112   d  d  c  b  a 0 2.9921
## 121   a  c  d  b  a 0 0.0167
## 176   d  d  c  c  a 0 0.0008
## 183   c  b  d  c  a 0 0.0103
## 185   a  c  d  c  a 0 2.9448
## 222   b  d  b  d  a 0 0.0607
## 240   d  d  c  d  a 0 0.0027
## 249   a  c  d  d  a 0 0.0509
## 285   a  d  b  a  b 0 1.8519
## 286   b  d  b  a  b 0 0.0995
## 287   c  d  b  a  b 0 0.0027
## 288   d  d  b  a  b 0 1.1411
## 297   a  c  c  a  b 0 2.8903
## 301   a  d  c  a  b 0 2.2144
## 350   b  d  b  b  b 0 2.9408
## 361   a  c  c  b  b 0 0.0850
## 364   d  c  c  b  b 0 0.0154
## 365   a  d  c  b  b 0 0.6450
## 377   a  c  d  b  b 0 2.8350
## 397   a  d  a  c  b 0 2.9956
## 413   a  d  b  c  b 0 0.0001
## 414   b  d  b  c  b 0 0.0549
## 417   a  a  c  c  b 0 0.5509
## 421   a  b  c  c  b 0 2.4449
## 425   a  c  c  c  b 0 0.0860
## 429   a  d  c  c  b 0 0.0004
## 439   c  b  d  c  b 0 2.0680
## 445   a  d  d  c  b 0 0.0000
## 478   b  d  b  d  b 0 0.2229
## 489   a  c  c  d  b 0 0.0204
## 493   a  d  c  d  b 0 0.1364
## 505   a  c  d  d  b 0 1.2175
## 506   b  c  d  d  b 0 0.2057
## 507   c  c  d  d  b 0 1.5164
## 508   d  c  d  d  b 0 0.0560
## 517   a  b  a  a  c 0 1.1298
## 518   b  b  a  a  c 0 1.0333
## 519   c  b  a  a  c 0 0.1836
## 520   d  b  a  a  c 0 0.6489
## 525   a  d  a  a  c 0 0.0570
## 541   a  d  b  a  c 0 2.8136
## 557   a  d  c  a  c 0 0.0098
## 573   a  d  d  a  c 0 0.1153
## 588   d  c  a  b  c 0 2.4637
## 604   d  c  b  b  c 0 0.2193
## 620   d  c  c  b  c 0 0.0064
## 633   a  c  d  b  c 0 0.1588
## 636   d  c  d  b  c 0 0.3127
## 695   c  b  d  c  c 0 0.4586
## 734   b  d  b  d  c 0 0.0004
## 793   a  c  b  a  d 0 2.6538
## 834   b  a  a  b  d 0 0.0049
## 850   b  a  b  b  d 0 0.0008
## 857   a  c  b  b  d 0 0.0392
## 866   b  a  c  b  d 0 0.0019
## 876   d  c  c  b  d 0 2.9803
## 882   b  a  d  b  d 0 2.9881
## 889   a  c  d  b  d 0 0.0018
## 921   a  c  b  c  d 0 0.0484
## 951   c  b  d  c  d 0 0.4589
## 965   a  b  a  d  d 0 0.2902
## 981   a  b  b  d  d 0 1.9221
## 985   a  c  b  d  d 0 0.2544
## 990   b  d  b  d  d 0 2.9346
## 997   a  b  c  d  d 0 0.7834
## 1009  a  a  d  d  d 0 2.9673
## 1013  a  b  d  d  d 0 0.0169
## 1017  a  c  d  d  d 0 0.0211
## 1021  a  d  d  d  d 0 0.0073
\end{verbatim}
\end{kframe}
\end{knitrout}

Table~3 in the main text is provided below

\begin{knitrout}
\definecolor{shadecolor}{rgb}{0.969, 0.969, 0.969}\color{fgcolor}\begin{kframe}
\begin{alltt}
\hlkwd{head}\hlstd{(tab)}
\end{alltt}
\begin{verbatim}
##    x1 x2 x3 x4 x5 y  upper
## 17  a  a  b  a  a 0 0.1695
## 21  a  b  b  a  a 0 0.1354
## 25  a  c  b  a  a 0 0.2292
## 29  a  d  b  a  a 0 2.4616
## 48  d  d  c  a  a 0 0.0002
## 57  a  c  d  a  a 0 0.0133
\end{verbatim}
\end{kframe}
\end{knitrout}

This also agrees with the first version of this document, which took several hours to do this job (and we are taking only a few seconds). 

\subsection{Linearity by computational geometry}
\label{sec:linearity-big-data}

Calculate linearity using R package \texttt{rcdd} like
in Section~\ref{sec:linearity-example-iv-rcdd} above.
Except that we are going to cache the result and the time it takes
to compute it.  Rather than using the cache feature of R package
\texttt{knitr}, which should not be committed under version control,
we cache it ourselves.
\begin{knitrout}
\definecolor{shadecolor}{rgb}{0.969, 0.969, 0.969}\color{fgcolor}\begin{kframe}
\begin{alltt}
\hlstd{tanv} \hlkwb{<-} \hlstd{modmat} \hlkwb{<-} \hlstd{out4}\hlopt{$}\hlstd{x}
\hlstd{vrep} \hlkwb{<-} \hlkwd{cbind}\hlstd{(}\hlnum{0}\hlstd{,} \hlnum{0}\hlstd{, tanv)}
\hlstd{vrep[dat}\hlopt{$}\hlstd{y} \hlopt{>} \hlnum{0}\hlstd{,} \hlnum{1}\hlstd{]} \hlkwb{<-} \hlnum{1}
\hlkwd{suppressWarnings}\hlstd{(foo} \hlkwb{<-} \hlkwd{try}\hlstd{(}\hlkwd{load}\hlstd{(}\hlstr{"foo-linearity.rda"}\hlstd{),} \hlkwc{silent} \hlstd{=} \hlnum{TRUE}\hlstd{))}
\hlkwa{if} \hlstd{(}\hlkwd{inherits}\hlstd{(foo,} \hlstr{"try-error"}\hlstd{)) \{}
    \hlstd{time.linearity.big.data} \hlkwb{<-} \hlkwd{system.time}\hlstd{(}
        \hlstd{lout} \hlkwb{<-} \hlkwd{linearity}\hlstd{(}\hlkwd{d2q}\hlstd{(vrep),} \hlkwc{rep} \hlstd{=} \hlstr{"V"}\hlstd{)}
    \hlstd{)}
    \hlstd{hostname.linearity.big.data} \hlkwb{<-} \hlkwa{NULL}
    \hlstd{cpuinfo.linearity.big.data} \hlkwb{<-} \hlkwa{NULL}
    \hlkwa{if} \hlstd{(}\hlkwd{Sys.info}\hlstd{()[}\hlstr{"sysname"}\hlstd{]} \hlopt{==} \hlstr{"Linux"}\hlstd{) \{}
        \hlstd{foo} \hlkwb{<-} \hlkwd{scan}\hlstd{(}\hlstr{"/proc/cpuinfo"}\hlstd{,} \hlkwc{what} \hlstd{=} \hlkwd{character}\hlstd{(}\hlnum{0}\hlstd{),} \hlkwc{sep} \hlstd{=} \hlstr{"\textbackslash{}n"}\hlstd{)}
        \hlstd{bar} \hlkwb{<-} \hlkwd{grep}\hlstd{(}\hlstr{"^model name"}\hlstd{, foo,} \hlkwc{value} \hlstd{=} \hlnum{TRUE}\hlstd{)}
        \hlstd{bar} \hlkwb{<-} \hlkwd{unique}\hlstd{(bar)}
        \hlstd{baz} \hlkwb{<-} \hlkwd{sub}\hlstd{(}\hlstr{"^model name\textbackslash{}\textbackslash{}t: "}\hlstd{,} \hlstr{""}\hlstd{, bar)}
        \hlstd{qux} \hlkwb{<-} \hlkwd{system}\hlstd{(}\hlstr{"nslookup `hostname`"}\hlstd{,} \hlkwc{intern} \hlstd{=} \hlnum{TRUE}\hlstd{)}
        \hlstd{quux} \hlkwb{<-} \hlkwd{grep}\hlstd{(}\hlstr{"^Name:"}\hlstd{, qux,} \hlkwc{value} \hlstd{=} \hlnum{TRUE}\hlstd{)}
        \hlstd{quuux} \hlkwb{<-} \hlkwd{sub}\hlstd{(}\hlstr{"^Name:\textbackslash{}\textbackslash{}t"}\hlstd{,} \hlstr{""}\hlstd{, quux)}
        \hlstd{quacks} \hlkwb{<-} \hlkwd{unique}\hlstd{(quuux)}
        \hlstd{hostname.linearity.big.data} \hlkwb{<-} \hlstd{quacks[}\hlnum{1}\hlstd{]}
        \hlstd{cpuinfo.linearity.big.data} \hlkwb{<-} \hlstd{baz}
    \hlstd{\}}
    \hlkwd{save}\hlstd{(lout, time.linearity.big.data,}
        \hlstd{hostname.linearity.big.data, cpuinfo.linearity.big.data,}
        \hlkwc{file} \hlstd{=} \hlstr{"foo-linearity.rda"}\hlstd{)}
\hlstd{\}}
\hlstd{linearity.too} \hlkwb{<-} \hlstd{dat}\hlopt{$}\hlstd{y} \hlopt{>} \hlnum{0}
\hlstd{linearity.too[lout]} \hlkwb{<-} \hlnum{TRUE}
\hlkwd{identical}\hlstd{(}\hlkwd{as.vector}\hlstd{(linearity), linearity.too)}
\end{alltt}
\begin{verbatim}
## [1] TRUE
\end{verbatim}
\end{kframe}
\end{knitrout}


\bibliographystyle{plainnat}
\bibliography{conjure}

\end{document}